\g@addto@macro\normalsize{%
  \setlength\abovedisplayskip{7pt}
  \setlength\belowdisplayskip{7pt}
  \setlength\abovedisplayshortskip{7pt}
  \setlength\belowdisplayshortskip{7pt}
}
\setlist{nolistsep} 	
\definecolor{Color1}{rgb}{0.0, 0.42, 0.47}
\definecolor{Color2}{rgb}{0.78, 0.11, 0.0}
\titlespacing*{\section}{0pt}{3.5ex plus 0ex minus 0ex}{1.5ex plus 0ex}
\titlespacing*{\subsection}{0pt}{3.5ex plus 0ex minus 0ex}{1.5ex plus 0ex}
\titlespacing*{\subsubsection}{0pt}{3.5ex plus 0ex minus 0ex}{1.5ex plus 0ex}
\renewenvironment{abstract}{
\begin{center}
{\bfseries \large\abstractname\vspace{\z@}}
\end{center}
\quotation
}
\newtheoremstyle{plain}{3mm}{3mm}{\slshape}{}{\bfseries}{.}{.5em}{}
\newtheoremstyle{definition}{2mm}{2mm}{}{}{\bfseries}{.}{.5em}{}
\theoremstyle{plain} 
\newtheorem{theorem}{Theorem}[section]
\newtheorem{problem}[theorem]{Problem}
\newtheorem{proposition}[theorem]{Proposition}
\newtheorem{question}[theorem]{Question}
\newtheorem{conjecture}[theorem]{Conjecture}
\newtheorem{lemma}[theorem]{Lemma}
\newtheorem{corollary}[theorem]{Corollary}
\theoremstyle{definition} 
\newtheorem{definition}[theorem]{Definition}
\newtheorem{remark}[theorem]{Remark}
\newtheorem{example}[theorem]{Example}
\theoremstyle{plain} 
\newcounter{MainTheoremCounter}
\theoremstyle{plain}
\newtheorem*{namedthm}{\namedthmname}
\newcounter{namedthm}
	\newenvironment{named}[2]
	{\def\namedthmname{#1}
	\refstepcounter{namedthm}
	\namedthm[#2]\def\@currentlabel{#1}}
	{\endnamedthm}
\numberwithin{equation}{section}
\newcommand{\Cesaro}{Ces\`{a}ro}
\newcommand{\Erdos}{Erd\H{o}s}
\newcommand{\Folner}{F\o{}lner}
\newcommand{\Sarkozy}{S\'{a}rk\"{o}zy}
\newcommand{\Katai}{K\'{a}tai}
\newcommand{\Turan}{Tur{\'a}n}
\newcommand{\Oh}{{\mathrm O}}
\newcommand{\oh}{{\mathrm o}}
\newcommand{\N}{\mathbb{N}}
\newcommand{\Z}{\mathbb{Z}}
\newcommand{\R}{\mathbb{R}}
\newcommand{\C}{\mathbb{C}}
\newcommand{\Q}{\mathbb{Q}}
\newcommand{\T}{\mathbb{T}}
\newcommand{\Hilb}{\mathcal{H}}
\newcommand{\define}[1]{{\itshape #1}}
\renewcommand{\epsilon}{\varepsilon}
\renewcommand{\leq}{\leqslant}
\renewcommand{\geq}{\geqslant}
\renewcommand{\setminus}{\backslash}
\renewcommand{\Re}{{\rm Re}}
\renewcommand{\P}{\mathbb{P}}
\renewcommand{\subset}{\subseteq}
\renewcommand{\d}{~\mathrm{d}}
\newcommand{\A}{\mathcal{A}}
\newcommand{\1}{1}
\newcommand{\E}{\operatornamewithlimits{\mathbb{E}}}
\newcommand{\logE}{\operatornamewithlimits{\mathbb{E}^{\log{}}}}
\newcommand{\bN}{\mathbf{N}}
\newcommand{\bP}{\mathbf{P}}
\newcommand{\bPW}{\mathbf{P}_W}
\newcommand{\str}{\mathrm{str}}
\newcommand{\rnd}{\mathrm{rnd}}
\newcommand{\AMdens}{\mathrm{d}^{\nnhash}\hspace{-.01em}}
\newcommand{\nnhash}{
\ooalign{
\rotatebox[origin=c]{-45}{$\rule{0.042em}{.4em}\hspace{.06em}\rule{0.042em}{.4em}$}\cr
\rotatebox[origin=c]{45}{$\rule{0.042em}{.4em}\hspace{.06em}\rule{0.042em}{.4em}$}\cr
}
}
\author{By~~{\scshape Florian~K.~Richter}}
\date{\small \today}
\title{\bfseries Sums and products in sets of positive density}
\begin{document}

\maketitle
\begin{abstract}
We develop an analytic approach that draws on tools from Fourier analysis and ergodic theory to study Ramsey-type problems involving sums and products in the integers. Suppose $Q$ denotes a polynomial with integer coefficients. We establish two main results. First, we show that if $Q(1) = 0$, then any set of natural numbers with positive upper logarithmic density contains a pair of the form $\{x + Q(y), xy\}$ for some $x, y \in \mathbb{N} \setminus \{1\}$. 
Second, we prove that if $Q(0) = 0$, then any set of natural numbers with positive density relative to a new multiplicative notion of density, which arises naturally in the context of such problems, contains $\{x + Q(y), xy\}$ for some $x, y \in \mathbb{N}$.
\end{abstract}

\tableofcontents
\thispagestyle{empty}


\section{Introduction}


In \cite{Hindman79a}, Hindman posed the following conjecture (see also \cite[Question~3]{HLS03} and \cite[Question~11]{Bergelson96}). 

\begin{conjecture}[Hindman's conjecture]
\label{conj_1}
For any finite coloring of $\N=\{1,2,3,\ldots\}$ there exist infinitely many $x,y\in\N$ such that the set $\{x,y,x+y,xy\}$ is monochromatic.
\end{conjecture}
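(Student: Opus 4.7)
The plan is to reduce Hindman's conjecture to a density statement by extracting a dense color class. In any finite coloring $\N = C_1 \cup \cdots \cup C_k$, at least one class $C_i$ has positive density with respect to the multiplicative notion of density introduced later in the paper. Applying the second main result announced in the abstract with the polynomial $Q(y) = y$ (which satisfies $Q(0) = 0$), one obtains $x,y \in \N$ with $\{x+y, xy\} \subset C_i$, placing two of the four elements of $\{x, y, x+y, xy\}$ in the same color. This already distills the conjecture to a question about forcing $x$ and $y$ themselves into $C_i$.

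The next step is to enhance the density theorem so that the configuration $\{x+y, xy\}$ can be produced with $x$ and $y$ constrained to lie in prescribed positive-density sets. Concretely, I would aim for a version asserting that for any three sets $A, B, C \subset \N$ of positive multiplicative density there exist $x \in A$ and $y \in B$ with $\{x+y, xy\} \subset C$. Running a pigeonhole argument over the $k$ colorings of $x$ and of $y$, and iterating to collapse all four entries into a single color class, would then close the loop. An intermediate target along the way is Moreira's result for $\{x, x+y, xy\}$ recovered purely from the density machinery of the paper, which should identify exactly which additional input is needed to also control the color of $y$.

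The main obstacle, which accounts for the longevity of the conjecture, is the notoriously poor interaction between the additive and multiplicative structures on $\N$. The multiplicative density underlying the abstract's second result is invariant under dilations $x \mapsto nx$ but blind to additive shifts; Fourier analysis diagonalizes the additive action while destroying multiplicative information, and vice versa. Even if one obtains monochromatic $\{x+y, xy\}$, there is no evident mechanism to place the free parameters $x$ and $y$ in the same color without shrinking $C_i$ in a way that destroys the density hypothesis needed to rerun the theorem. I expect the decisive hurdle to be the construction of a joint additive--multiplicative correspondence principle that provides a mean simultaneously stationary under both the translations $x \mapsto x+n$ and the dilations $x \mapsto nx$; absent such a principle, any coloring-to-density reduction of the full four-element Hindman configuration seems out of reach.
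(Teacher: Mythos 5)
This statement is an open conjecture, not a theorem of the paper: the paper explicitly records that \cref{conj_1} remains unresolved (only the two-color case and the three-element pattern $\{x,x+y,xy\}$ are known), and it offers no proof for you to be compared against. Your proposal likewise does not prove it; it is a research plan whose decisive step is missing. The first paragraph is sound as far as it goes and matches what the paper itself observes: by subadditivity of $\AMdens$ some color class $C_i$ has $\AMdens(C_i)>0$, and \cref{thm_2} with $Q(y)=y$ then yields monochromatic $\{x+y,xy\}$. But this only recovers a special case of Moreira's theorem, which the paper already points out.

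The genuine gap is your second step. The ``enhanced'' statement you posit --- that for sets $A,B,C$ of positive $\AMdens$-density one can find $x\in A$, $y\in B$ with $\{x+y,xy\}\subset C$ --- is nowhere established, and no pigeonhole over colorings can substitute for it. Worse, it cannot be true as stated for the additive constraint on $y$: the density $\AMdens$ is multiplicatively invariant but not additively invariant (the paper stresses that no density on $\N$ can be both, by non-amenability of the affine semigroup), so a set $B$ of positive $\AMdens$-density can avoid entire residue classes relevant to $x+y$, and the variable $y$ enters the pattern both additively (in $x+y$) and multiplicatively (in $xy$). Controlling the color of $y$ simultaneously with $x+y$ and $xy$ is precisely the content of Hindman's conjecture; your proposal names this obstacle candidly in its final paragraph but does not overcome it. So the proposal is a correct diagnosis of why the problem is hard, not a proof.
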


In fact, Hindman also formulated an extension of \cref{conj_1} in \cite{Hindman79a} (see also~\cite[Question 17.18]{HS12a},~{\cite[p.~84]{GRS90}} and~{\cite[Problem~6.5]{BR09}}).

\begin{conjecture}[Generalized Hindman's conjecture]
\label{conj_2}
For any $k\in\N$ and any finite coloring of $\N$ there exist infinitely many $x_1<\ldots<x_k\in\N$ such that all finite sums and all finite products formed using distinct elements from $\{x_1,\ldots,x_k\}$ are monochromatic.
\end{conjecture}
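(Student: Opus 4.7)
The generalized Hindman conjecture is a well-known open problem, so my proposal has to be candid about this and aim at a strategy that uses the paper's density results as one ingredient, while flagging the main obstruction up front. The starting point would be the standard reduction from partition Ramsey to density Ramsey: given a finite coloring of $\N$, at least one color class $A$ must carry positive density with respect to the multiplicative notion developed in the paper (or positive upper logarithmic density). Applying the paper's second main result with $Q(y) = y$ immediately produces $x,y \in \N$ with $\{x+y,\, xy\} \subset A$, and the first main result with $Q(y) = y-1$ gives the analogous statement over $\N\setminus\{1\}$. This already secures one sum-product pair inside a color class.

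The next step is to upgrade from a single pair $\{x+y,xy\}$ in $A$ to a full infinite sequence $x_1 < x_2 < \ldots$ in $A$ whose arbitrary nonempty finite sums and products all lie in $A$. The natural plan is to enforce additive and multiplicative IP-structure simultaneously inside $A$. Hindman's finite sums theorem yields an additive IP set $\mathrm{FS}(y_i) \subset A'$ inside some color class $A'$, and Hindman's finite products theorem yields a multiplicative IP set $\mathrm{FP}(z_j) \subset A''$; one would like to force $A' = A'' = A$ with $y_i = z_i$. I would try to do this inside an IP-ring, using the paper's density result at each stage: having produced $x_1,\ldots,x_n$ for which all $2^n-1$ sums and $2^n-1$ products lie in $A$, I would look at the set
\[
B_n \;=\; \bigl\{\,y \in A \,:\, y+s \in A \text{ for every sum } s \text{ of the } x_i,\ y\cdot p \in A \text{ for every product } p\,\bigr\},
\]
and try to show via a relativized form of the paper's density theorems that $B_n$ has positive multiplicative density, then extract $x_{n+1} \in B_n$.

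The main obstacle, and the reason the conjecture remains open, is precisely the coupling of additive and multiplicative IP-structure. The paper's density theorems already require substantial Fourier-analytic and ergodic machinery to produce even one pair of the form $\{x+Q(y),xy\}$, and the case $k=2$ of the conjecture demands additionally that $x$ and $y$ themselves sit in the color class; the iteration above would need $B_n$ to inherit positive density under a joint additive-multiplicative restriction, which the stated results do not obviously provide. I would expect that carrying out this plan requires genuinely new input: either a relative density increment that is simultaneously additive and multiplicative, or a combined additive-multiplicative correspondence principle in the style of Furstenberg that yields an ergodic-theoretic object carrying both an $(\N,+)$- and an $(\N,\cdot)$-action with a common invariant positive-density set. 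Absent such machinery, my proposal would at best prove the sum-product pair version already in the paper, not the full generalized Hindman conjecture.
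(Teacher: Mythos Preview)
Your assessment is entirely correct: Conjecture~1.2 is stated in the paper as an open problem, not as a theorem, and the paper offers no proof of it. There is therefore no ``paper's own proof'' to compare your proposal against. The paper explicitly notes that even the case $k=2$ (Conjecture~1.1) remains open, with only the two-color case settled by Bowen.

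Your discussion of what the paper's results \emph{do} buy is accurate. Subadditivity of $\AMdens$ guarantees that some color class $A$ has $\AMdens(A)>0$, and Theorem~1.7 with $Q(y)=y$ then yields $\{x+y,xy\}\subset A$; the paper itself makes exactly this observation, pointing out that this recovers a special case of Moreira's theorem and gives the first analytic proof of a monochromatic $\{x+y,xy\}$. You are also right that this falls short of even $k=2$ of the conjecture, since one needs $x$ and $y$ themselves in the color class.

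Your diagnosis of the obstruction is on target. The iterative scheme via the sets $B_n$ is a natural thing to try, but the paper's density theorems give no control over whether $B_n$ retains positive density under the simultaneous additive and multiplicative constraints you impose; indeed, the non-amenability of the affine semigroup over $\Z$ (emphasized in the introduction) is precisely what blocks a joint additive--multiplicative correspondence principle of the kind you describe. So your proposal is not a proof, but it is an honest and accurate account of why the conjecture lies beyond the paper's methods.
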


Note that \cref{conj_1} corresponds to the case $k=2$ of \cref{conj_2}.

Hindman's conjectures stand among the most significant touchstone problems in Ramsey theory, exposing the limits of our understanding of the complex interplay between additive and multiplicative structures in the integers.
A major step towards resolving \cref{conj_1} was taken in \cite{Moreira17}, where it was shown that any finite coloring of $\N$ admits infinitely many $x,y\in\N$ such that the set $\{x,x+y,xy\}$ is monochromatic.
The special case of \cref{conj_1} where the coloring consists of two colors was solved recently in \cite{Bowen25}. Despite this progress, \cref{conj_1} remains open. Moreover, Hindman's conjectures are surrounded by a wide range of other unsolved Ramsey-type problems concerning the joint behavior of sums and products in the integers, see for example~\cite[Section~7]{Sahasrabudhe18}, \cite[Section~6]{BM18}, \cite[Section~5]{KMRR25}, \cite[Section~6]{BS24}, \cite[Section~4]{Alweiss24}, \cite{Frantzikinakis24}, and \cite[Section~4]{Bowen25}.

Considerably more is known about questions of this nature when the setting of the integers is replaced by the setting of fields, such as the finite field with $p$ elements $\mathbb{F}_p$, or the rational numbers $\Q$.
Shkredov~\cite{Shkredov10} used Fourier analysis to study the pattern $\{x,x+y,xy\}$ in $\mathbb{F}_p$, showing that for any $\delta>0$ there is a cofinite set of primes $p$ such that any subset of $\mathbb{F}_p$ with relative density $\geq\delta$ contains $\{x,x+y,xy\}$. 
Later, Green and Sanders~\cite{GS16} proved that
for any $r\in\N$ there is a cofinite set of primes $p$ such that any $r$-coloring of $\mathbb{F}_p$ admits a monochromatic quadruple $\{x, y, x+y, xy\}$; this result can be viewed as the natural analogue of \cref{conj_1} in finite fields. 
In \cite{BM17}, Bergelson and Moreira used techniques from ergodic theory to study monochromatic sums and products in countably infinite fields, such as $\Q$. They proved that any set with positive density (with respect to a double \Folner{} sequence, see \cite[Definition~1.3]{BM17}) contains a set of the from $\{x+y,xy\}$. In \cite{Kousek26}, Kousek refined the ergodic approach and extended its scope.
Finally, the analogues of Conjectures~\ref{conj_1} and~\ref{conj_2} in $\mathbb{Q}$ were solved recently in \cite{BS24} and \cite{Alweiss23}, respectively.

This goes to show that Fourier-analytic and ergodic-theoretic techniques have been used with great success to study the joint behavior of sums and products in fields. Despite this success, the application of these tools has not yet been adapted to the setting of the integers, as the non-amenability of affine integer actions presents serious obstructions. The main purpose of this paper is to use ideas from multiplicative number theory to overcome these obstacles and establish an analytic framework for studying Ramsey-type sum-product problems in $\N$ based on methods from Fourier analysis and ergodic theory. 

Our work is motivated by two open problems informed by Hindman's conjectures. The first is a conjecture formulated several years ago by Moreira.
The \define{density} of a set of natural numbers $A\subset\N$ is defined as
\begin{equation}
\label{eqn_density}
d(A)=\lim_{N\to\infty}\frac{|A\cap \{1,\ldots,N\}|}{N}
\end{equation}
whenever this limit exists.

\begin{conjecture}[Moreira's Conjecture]
\label{conj_3}
Any subset of $\N$ with positive density contains $\{x+y-1,xy\}$ for some $x,y\in\N\setminus\{1\}$. 
\end{conjecture}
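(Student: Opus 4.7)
The plan is to reduce the problem to a multiplicative setting where Fourier-analytic and ergodic tools apply. A first key observation is that positive natural density implies positive upper logarithmic density, so it suffices to prove the conjecture under the weaker hypothesis that $\limsup_{N\to\infty}\frac{1}{\log N}\sum_{n\leq N,\,n\in A}\frac{1}{n}>0$. This reduction is essential: logarithmic averages are compatible with the multiplicative action $n\mapsto kn$ in a way that natural density is not, which is what opens the door to Fourier and ergodic techniques on the multiplicative side.

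Next, I would parametrize the desired pattern through primes. For each prime $p\in[2,Y]$ and each $x\in\N$, the pair $(x+p-1,\,px)$ has the form $\{x+y-1,xy\}$ with $y=p\geq 2$. Hence it suffices to establish that, for some large $Y$, the logarithmic bilinear average
\[
\frac{1}{|\P\cap[2,Y]|}\sum_{p\in\P\cap[2,Y]}\logE_{x\leq N}\,\mathbf{1}_A(px)\,\mathbf{1}_A(x+p-1)
\]
remains bounded below as $N\to\infty$. Each inner average is a correlation between a multiplicative dilate and an additive shift of $\mathbf{1}_A$, mixing the two structures that Hindman-type problems require us to reconcile.

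To control this, I would decompose $\mathbf{1}_A$ into a structured part -- a superposition of dilation eigenfunctions such as the characters $n\mapsto n^{it}$ and Dirichlet characters -- and a multiplicatively uniform part. The structured part should be analyzed via explicit spectral computation, ultimately reducing to the existence of additive configurations in a suitably refined positive-density subset of $\N$, where classical additive-combinatorial results apply. For the uniform part, one expects cancellation after averaging over the prime $p$, in the spirit of \Turan--Kubilius- or \Katai-type estimates for correlations of multiplicative functions along shifted arguments.

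The main obstacle will be handling the uniform part rigorously. The additive shift by $p-1$ couples non-trivially with the multiplicative dilation by $p$, and the underlying affine action of the $ax+b$ semigroup on $\Z$ is non-amenable, so ordinary \Folner{} averaging cannot be invoked directly. The central idea is that averaging over the prime $p$ itself -- a multiplicative object -- should compensate for this non-amenability at the level of logarithmic density, yielding an effective Furstenberg-type correspondence between $(A,\N)$ and a measure-preserving system associated with the multiplicative group of positive rationals. Making this correspondence precise, and using it to conclude that the uniform part contributes negligibly while the structured part remains positive, will be the principal technical difficulty.
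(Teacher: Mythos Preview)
Your overall framework is close to the paper's in spirit---passage to upper logarithmic density, a structured/random decomposition, and averaging over primes to tame the affine action---but the decomposition you propose runs in the wrong direction, and this is where the argument would break down.

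You suggest splitting $\mathbf{1}_A$ multiplicatively, into a pretentious part (spanned by $n^{it}$ and Dirichlet characters) and a multiplicatively uniform remainder. The paper does the opposite: it decomposes $\mathbf{1}_A=f_{\str}+f_{\rnd}$ \emph{additively}, where $f_{\str}$ is \emph{locally quasiperiodic} (for every $\epsilon$ there is $W$ with $\sup_m \logE_{n}|f_{\str}(n+Wm)-f_{\str}(n)|^2<\epsilon$) and $f_{\rnd}$ is \emph{locally aperiodic} (totally ergodic under the shift). This is the crucial design choice, because it is tailored to exploit the hypothesis $Q(1)=0$ for $Q(y)=y-1$: restricting the prime average to $p\equiv 1\bmod W$ forces $Q(p)\equiv 0\bmod W$, so the shift by $p-1$ becomes invisible to $f_{\str}$, and the structured contribution collapses to a purely multiplicative correlation $\logE_{n} f_{\str}(n)f_{\str}(pn)$, which is handled by a Davenport--\Erdos{}/Ramsey argument. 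Your multiplicative structured part has no such mechanism: a function like $n^{it}$ or $\chi(n)$ behaves well under $n\mapsto pn$ but has no usable regularity under $n\mapsto n+p-1$, so the vague reduction ``to additive configurations in a refined positive-density set'' does not materialize into a concrete step.

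On the random side the paper also differs from your sketch. Rather than \Katai{} applied to a multiplicatively uniform function, it uses a multiplicative van der Corput inequality to pass to correlations of the form $\logE_n f_{\rnd}(ap_3 n+Q(p_2 p_1))\overline{f_{\rnd}(ap_2 n+Q(p_3 p_1))}$, then invokes the spectral characterization of local aperiodicity (spectral measure supported on irrationals) together with Vinogradov--Rhin bounds for $\logE_{p}\, e(R(p)\alpha)$ with $\alpha$ irrational. Finally, the paper averages not over single primes but over \emph{products of $k$ primes} in $\P_W$; this iteration (together with \Turan--Kubilius) is what makes the structured-part lower bound go through. Your single-prime average would not suffice for that step.
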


Observe that not every subset of $\N$ with positive density contains $\{x+y,xy\}$ due to simple divisibility constraints; for instance, the set of odd numbers has density $\frac{1}{2}$ but does not contain such a configuration. 
Moreira's conjecture elegantly circumvents this issue via a translation of the first component, which dispels all such local obstructions.

The second motivating problem for this paper concerns the general question of which subsets of the integers contain the pattern $\{x+y,xy\}$. Very little is currently known about this question, with some conjectures 
being proposed in \cite[Conjectures 6.4~and~6.5]{BM18}.
A natural class to consider are all sets with positive density that exhibit no modular biases. More precisely, we say a set $A\subset\N$ is \define{evenly distributed across all residue classes} if for any $a\in\N$ and $b\in\N\cup\{0\}$ the density $d(A\cap (a\N+b))$ exists and equals $a^{-1}d(A)$. 

\begin{problem}
\label{prop_4}
Prove that any subset of $\N$ with positive density that is evenly distributed across all residue classes contains $\{x+y,xy\}$ for some $x,y\in\N$. 
\end{problem}

While both \cref{conj_3} and \cref{prop_4} have been circulating the field for some time, neither has appeared in print.
However, extensions of both have recently been asked in \cite[Questions~5.4 and~5.2]{KMRR25}.

\subsection{Main results}

Our main theorems resolve \cref{conj_3} and \cref{prop_4}. Before formulating them, we first introduce some necessary notation.

Given a finite set $B\subset\N$ and a function $f\colon B\to\C$, the \define{\Cesaro{} average} of $f$ over $B$ and the \define{logarithmic average} of $f$ over $B$ are defined receptively as
\[
\E_{n\in B} f(n)=\frac{\sum_{n\in B} f(n)}{|B|}
\qquad\text{and}\qquad
\logE_{n\in B} f(n)=\frac{\sum_{n\in B} \frac{f(n)}{n}}{\sum_{n\in B}\frac{1}{n}}.
\]
Throughout this paper, we use $[N]$ to abbreviate the set $\{1,\dots,N\}$.
The \define{upper density} of a set $A\subset\N$ is defined as
\[
\overline{d}(A)=\limsup_{N\to\infty} \E_{n\in[N]} \1_A(n),
\]
and the \define{upper logarithmic density} of $A$ is
\[
\overline{\delta}(A)=\limsup_{N\to\infty} \logE_{n\in[N]} \1_A(n).
\]
Note that for all $A\subset\N$ we have $\overline{\delta}(A)\leq \overline{d}(A)$, and if the density $d(A)$ exists (as defined in \eqref{eqn_density}) then
\begin{equation}
\label{eqn_equality_of_densities}
    d(A)=\overline{d}(A)=\overline{\delta}(A).
\end{equation}

As was already observed in the 1930's in the works of Besicovitch \cite{Besicovitch35} and Davenport-\Erdos{} \cite{DE36}, sets with positive upper logarithmic density have far richer multiplicative structure than sets with positive upper density (see also \cite{Erdos35,DE51} and \cite[Section 2]{BBHS06}).
Our first result reaffirms this heuristic, showing that in Moreira's conjecture the positive density assumption can be weakened to positive upper logarithmic density. Furthermore, our result provides a natural polynomial extension of \cref{conj_3}.

\begin{theorem}
\label{thm_1}
Let $Q$ be a polynomial with integer coefficients satisfying $Q(1)=0$. Then 
any $A\subset\N$ with $\overline{\delta}(A)>0$ contains $\{x+Q(y),xy\}$ for some $x,y\in\N\setminus\{1\}$.
\end{theorem}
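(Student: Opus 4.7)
Since $Q(1)=0$, write $Q(y)=(y-1)R(y)$ with $R\in\Z[y]$. Finding $x,y\geq 2$ with $\{x+Q(y),xy\}\subseteq A$ is equivalent to finding $n\in A$ together with a divisor $y\geq 2$ of $n$ satisfying $n/y\geq 2$ and $n/y+Q(y)\in A$. Setting $\delta:=\overline{\delta}(A)>0$, after substituting $n=ym$ the problem reduces to producing some $y\geq 2$ for which
\[
C_y:=\limsup_{N\to\infty}\logE_{m\in[N/y]}\,\1_A(ym)\,\1_A(m+Q(y))>0.
\]
The natural target is the stronger averaged lower bound
\[
\limsup_{Y\to\infty}\,\limsup_{N\to\infty}\,\logE_{y\in[Y]}\,\logE_{m\in[N/y]}\,\1_A(ym)\,\1_A(m+Q(y))\;\geq\;\delta^2,
\]
which, if obtained, forces $C_y>0$ for some $y\geq 2$; the $m=1$ contribution is $\oh(1)$, so one does recover $x=m\geq 2$.

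\textbf{Main argument.} The correlation displayed couples a \emph{multiplicative} dilation $m\mapsto ym$ with an \emph{additive} perturbation $m\mapsto m+Q(y)$. To estimate it, I would decompose $\1_A$ in the spirit of the Hal\'asz-Granville-Soundararajan-Tao theory of mean values of multiplicative functions into a \emph{structured} piece measurable with respect to a finite collection of Dirichlet characters and Archimedean phases $n^{it}$, plus a \emph{uniform} remainder orthogonal to these characters in the sense of \Matomaki-\Radziwill-Tao. Against the structured piece the dilation $m\mapsto ym$ acts by predictable phases, allowing the correlation to be computed exactly; after $\logE_{y\in[Y]}$-averaging the additive shift $Q(y)$ decouples the two occurrences of $\1_A$ via equidistribution of $y^{it}$ and of the Dirichlet-character phases, producing a main term that tends to $\delta^2$ as $Y\to\infty$. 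Against the uniform remainder, an orthogonality estimate should give $\oh(1)$: multiplicatively uniform functions are insensitive to additive shifts on average.

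\textbf{Main obstacle.} The decisive step is that orthogonality estimate: controlling the uniform remainder in the presence of the additive shift $m\mapsto m+Q(y)$. Since additive translation and multiplicative dilation do not commute, the usual multiplicative cancellation arguments (such as those of \Matomaki-\Radziwill) cannot be applied directly. The hypothesis $Q(1)=0$ is what unlocks the argument: it forces $|Q(y)|=\Oh(y^{\deg Q})$ to be much smaller than the typical scale of $m$ (which is $N/y$ with $N\to\infty$), so that $m+Q(y)$ and $m$ lie in the same multiplicatively infinitesimal window---exactly the regime in which \Matomaki-\Radziwill-type short-interval averaging of multiplicative functions yields cancellation. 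Without this vanishing at $y=1$, the additive shift would typically be of the same order as the argument, the short-interval control would break down, and a substantively different attack would be needed.
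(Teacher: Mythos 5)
Your opening reduction is sound and matches the paper's starting point: one studies the correlation $\logE_{n}\1_A(n+Q(y))\1_A(yn)$ and tries to make it positive for some $y\geq 2$ after averaging in $y$. But the main argument rests on a decomposition that is not available here. The Hal\'asz--Granville--Soundararajan and \Matomaki{}--\Radziwill{} theory decomposes \emph{multiplicative} functions into a pretentious part (Dirichlet characters times $n^{it}$) plus a multiplicatively uniform remainder; the indicator $\1_A$ of an arbitrary set of positive upper logarithmic density carries no multiplicative structure, so ``the remainder orthogonal to all characters'' is not a defined object, and the claim that the structured part contributes a main term tending to $\delta^2$ after $y$-averaging is unsupported (positive logarithmic density gives no multiplicative mixing, and $\logE_{m}\1_A(m)\1_A(ym)$ can be far from $\delta^2$ for most $y$). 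The paper's decomposition goes the other way: $\1_A=f_{\str}+f_{\rnd}$ with $f_{\str}$ locally quasiperiodic and $f_{\rnd}$ locally aperiodic --- an \emph{additive} spectral (Herglotz) splitting obtained from the mean ergodic theorem on a Hilbert space of logarithmic averages. The random part is killed by a new multiplicative van der Corput inequality combined with Vinogradov-type bounds $\logE_{p}e(Q_r(p)\alpha)\to 0$ for irrational $\alpha$, and the structured part is handled by a logarithmic Tur\'an--Kubilius inequality plus a Ramsey argument that forces the $\delta^2-\epsilon$ bound for \emph{some} number $k$ of prime factors; crucially, the $y$-average is an iterated logarithmic average over products $p_1\cdots p_k$ of primes in $\P_W$, not over all $y\in[Y]$.

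You also misidentify what $Q(1)=0$ is for. The bound $|Q(y)|=\Oh(y^{\deg Q})\ll N/y$ holds for \emph{every} polynomial once $N\to\infty$ with $y\leq Y$ fixed, so the ``multiplicatively infinitesimal window'' observation uses nothing about $Q(1)$ and cannot be the unlocking mechanism. The hypothesis is a congruence condition: it guarantees $Q(Wm+1)\equiv 0\bmod W$, so restricting $y$ to products of primes $p\equiv 1\bmod W$ makes the shift $Q(y)$ a multiple of $W$ and hence invisible to the $W$-quasiperiodic component $f_{\str}$. This is exactly what removes the local obstructions (e.g.\ $A$ the odd numbers with $Q(y)=y$, where $x+y$ and $xy$ cannot both be odd). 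Your proposed route would therefore fail both at the orthogonality step you flag as the ``main obstacle'' and at the main-term computation, where the congruence role of $Q(1)=0$, not the size of $Q(y)$, is the crux.
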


The case $Q(y)=0$ of \cref{thm_1} recovers a classical result of Davenport and \Erdos{} \cite{DE36}, which asserts that any set of positive upper logarithmic density contains $\{x,xy\}$ for some $x,y\in\N\setminus\{1\}$.
By taking $Q(y)=y-1$ and in light of \eqref{eqn_equality_of_densities}, we see that \cref{thm_1} implies that \cref{conj_3} is true.

Our proof of \cref{thm_1} reveals that sets with positive upper logarithmic density admit not only one but many configurations of the from $\{x+Q(y),xy\}$ whenever $Q$ is an integer polynomial with $Q(1)=0$. In fact, we show that for any $A\subset\N$ and any $\epsilon>0$ there exist many ``almost prime'' numbers $y$ such that the set $\{x\in\N: x+Q(y),xy\in A\}$ has upper logarithmic density at least $\overline{\delta}(A)^2-\epsilon$. Loosely speaking, by ``almost prime'' numbers we mean positive integers that have relatively few prime factors; for the precise statement, see \cref{thm_1_tech}.

It is natural to ask wether the condition $\overline{\delta}(A)>0$ in \cref{thm_1} can be relaxed to the weaker condition $\overline{d}(A)>0$. The following example shows that this is not possible.

\begin{example}
\label{ex_1}
The set $A=\bigcup_{n\geq 4} [2^{2^n},2^{2^n+\frac{1}{2}})$ satisfies $\overline{d}(A)>0$, but does not contain patterns of the from $\{x+y-1,xy\}$ for $x,y\in\N\setminus\{1\}$.    
\end{example}

Our second main theorem can be seen as a first step towards a density version of Hindman's conjecture (\cref{conj_1}).  
We introduce a new density on the integers, denoted by $\AMdens$. For $A\subset\N$, it is defined as
\begin{equation}
\label{eqn_def_affine_density}
\AMdens(A)=\sup_{(N_s)_{s\in\N}}\Big(\liminf_{a\to\infty} \sup_{m\in a\N} \Big(\lim_{s\to\infty} \logE_{n\in[N_s]} \1_A(mn)\Big)\Big),
\end{equation}
where the outer supremum is taken over all increasing sequences $(N_s)_{s\in\N}$ such that for every $m\in\N$ the inside limit $\lim_{s\to\infty} \E^{\log}_{n\in[N_s]} \1_A(mn)$ exists.

Observe that writing $\liminf_{a\to\infty} \sup_{m\in a\N}$ is equivalent to taking a limit superior with respect to the partial ordering induced by the relation of divisibility on the positive integers, making it a natural property to consider. This suggests to view $\AMdens(A)$ as a measurement of the relative largeness of $A$ within progressions $a\N$ for highly divisible numbers $a\in\N$. In particular, if a set $A\subset\N$ satisfies $\AMdens(A)>0$ then necessarily $\overline{\delta}(A\cap a\N)>0$ for all $a\in\N$. 

Note that $\AMdens(.)$ possesses all the essential properties of a density, as it satisfies the following conditions:
\begin{itemize}
    \item[] \textit{unit range:}~~~$\AMdens(\emptyset)=0$ and $\AMdens(\N)=1$.
    \item[] \textit{monotonicity:}~if $A\subset B$ 
 then $\AMdens(A)\leq \AMdens(B)$.
    \item[] \textit{subadditivity:}~~for all $A,B\subset \N$ one has $\AMdens(A\cup B)\leq \AMdens(A)+\AMdens(B)$.
\end{itemize}
Moreover, the density $\AMdens(.)$ is multiplicatively invariant, meaning that for any $m\in\N$ and $A\subset\N$ we have $\AMdens(A/m)=\AMdens(A)$, where $A/m=\{n\in\N: nm\in A\}$.
Therefore, $\AMdens(.)$ cannot be additively invariant, since the integers don't support a density notion that is both additively and multiplicatively invariant, which is a consequence of the fact that the affine semigroup of integers is not amenable. This is one of the most crucial ways in which the setting of the integers differs from the setting of fields, where doubly-invariant notions of density are plentiful.
Nonetheless, the density $\AMdens(.)$ exhibits some additively invariant structure. Specifically, $\AMdens(.)$ is absolutely continuous with respect to $\overline{\delta}$, i.e., for all $A\subset\N$ we have $\overline{\delta}(A)=0\implies \AMdens(A)=0$.  

The following theorem provides some evidence that sets whose $\AMdens$-density is positive are both additively and multiplicatively rich.

\begin{theorem}
\label{thm_2}
Let $Q$ be a polynomial with integer coefficients satisfying $Q(0)=0$. Then any $A\subset\N$ with $\AMdens(A)>0$ contains $\{x+Q(y),xy\}$ for some $x,y\in\N$.
\end{theorem}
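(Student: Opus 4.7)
Since $Q(0)=0$, factor $Q(y)=yR(y)$ with $R\in\Z[y]$. The target $\{x+Q(y),xy\}\subset A$ then amounts to finding $y,x\in\N$ with $xy\in A$ and $x+Q(y)\in A$, i.e.\ $x\in (A/y)\cap(A-Q(y))$, where $A/y:=\{m\in\N:my\in A\}$. The strategy is to take $y$ to be a highly divisible integer $m_a$ supplied by $\AMdens$ and to establish a positive lower bound for the logarithmic density of $(A/m_a)\cap(A-Q(m_a))$.

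\textbf{Extracting multiplicative density from $\AMdens$.} Unwinding \eqref{eqn_def_affine_density}, I would fix $\delta>0$, an increasing sequence $(N_s)_{s\in\N}$, and, for every sufficiently large $a\in\N$, a dilation $m_a\in a\N$ with $\rho(m_a):=\lim_{s\to\infty}\logE_{n\in[N_s]}\1_A(m_a n)\geq\delta$. Setting $y=m_a$, the set $A/y$ has logarithmic density at least $\delta$ along $(N_s)$. Logarithmic density is asymptotically invariant under additive translation by any fixed amount, so for fixed $m_a$ one has $\overline{\delta}(A-Q(m_a))=\overline{\delta}(A)$ along $(N_s)$, and this quantity is positive by the absolute continuity of $\AMdens(\cdot)$ with respect to $\overline{\delta}$ recalled in the introduction. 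The intersection $(A/m_a)\cap(A-Q(m_a))$ is nonempty as soon as
\[
I(m_a)\ :=\ \lim_{s\to\infty}\logE_{x\in[N_s]}\1_A(m_a x)\,\1_A(x+Q(m_a))\ >\ 0.
\]

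\textbf{Decorrelation estimate and main obstacle.} Writing $\1_A=\overline{\delta}(A)+\phi_A$ with $\phi_A$ of mean zero and expanding $I(m_a)$ yields a main term $\overline{\delta}(A)\cdot\rho(m_a)\geq \delta\,\overline{\delta}(A)>0$, together with error terms carrying $\phi_A$ on either the multiplicative shift $m_a x$ or the additive shift $x+Q(m_a)$. The decisive step is to show that these errors are $o(1)$ as $a\to\infty$ through the $m_a$'s supplied by $\AMdens$, i.e.\ to prove a decorrelation estimate between the multiplicative dilate $\1_A(m_a\,\cdot\,)$ and the additive polynomial shift $\1_A(\,\cdot\,+Q(m_a))$ in logarithmic average. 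This is a statement in multiplicative number theory of the flavour reachable via the \Matomaki--\Radziwill{} machinery, combined with a polynomial \Sarkozy-type input for logarithmic averages. The hard part will be exactly this decorrelation: the non-amenability of the affine action on $\N$, noted in the introduction, forbids a naive ergodic argument, and the very design of $\AMdens$ — manifested for instance by its vanishing on residue-biased sets such as the odd integers — is precisely what absorbs the divisibility obstructions that would otherwise frustrate the Fourier-analytic input.
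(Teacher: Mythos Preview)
The proposal has a genuine gap: the ``decorrelation estimate'' you defer to the end is the entire content of the theorem, and you have supplied no argument for it beyond a gesture toward \Matomaki--\Radziwill{} and polynomial \Sarkozy. Neither applies here. The \Matomaki--\Radziwill{} machinery concerns bounded multiplicative functions, not the indicator of an arbitrary set, and says nothing about the correlation $\logE_x \phi_A(m_a x)\,\phi_A(x+Q(m_a))$ for a \emph{single fixed} dilate $m_a$ and a \emph{single fixed} shift $Q(m_a)$. There is in fact no reason for this quantity to tend to zero: $\phi_A=\1_A-\overline{\delta}(A)$ may contain both locally quasiperiodic and locally aperiodic components, and two aperiodic functions can correlate with one another (take $\phi_A(x)\cdot\phi_A(x)$). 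A secondary issue is that the ``$\overline{\delta}(A)$'' appearing in your main term is the density along the specific subsequence $(N_s)$, whereas the absolute-continuity remark in the introduction only guarantees the full $\limsup$ is positive; these need not coincide.

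The paper's argument differs from yours in two essential ways. First, the decomposition is not mean plus zero-mean but the finer splitting $\1_A=f_{\str}+f_{\rnd}$ of \cref{thm_str} into a locally quasiperiodic and a locally aperiodic part; the quasiperiodic part is what actually carries the main term, and the divisibility built into the sequence $(a_j)$ is used to absorb the shift $Q(vp)$ into a period of $f_{\str}$. Second --- and this is the point your outline misses entirely --- the paper does \emph{not} fix a single $y$. It takes $y=vp_1\cdots p_k$ and \emph{averages} logarithmically over the primes $p_1,\ldots,p_k$; see \cref{thm_2_tech}. This averaging is indispensable: it is what allows the multiplicative van der Corput inequality (\cref{thm_mvc}) to convert the $f_{\rnd}$-contribution into exponential sums over primes, which then vanish by Vinogradov--Rhin (\cref{cor_log_vinogradov}, \cref{thm_rnd_component}). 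The $f_{\str}$-contribution is handled separately by a Ramsey argument combined with \Turan--Kubilius (\cref{thm_str_component}). Without averaging in $y$ there is no van der Corput step, and without the structure theorem there is no way to isolate the part of $\1_A$ on which a decorrelation argument can actually be run.
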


\cref{thm_2} is a consequence of a more general theorem that we prove, which provides optimal correlation estimates for the pattern $\{x+Q(y),xy\}$, see \cref{thm_2_tech} below.

Note that if $A\subset \N$ has positive density and is evenly distributed across all residue classes, then $\AMdens(A)>0$. This means that choosing $Q(y)=y$ in \cref{thm_2} resolves \cref{prop_4}.
Also, the subadditivity property of $\AMdens(.)$ implies that if $\N$ is finitely colored, say $\N=C_1\cup\ldots\cup C_r$, then at least one of the color classes $C_i$ satisfies $\AMdens(C_i)>0$. Therefore, for any finite coloring of $\N$ and any integer polynomial $Q$ with $Q(0)=0$ one can find infinitely many $x,y\in\N$ such that $\{x+Q(y),xy\}$ is monochromatic, recovering a special case of Moreira's Theorem~\cite[Theorem~1.4]{Moreira17}. This provides the first ``analytic'' proof that any finite coloring of $\N$ admits a monochromatic pair $\{x+y,xy\}$.

It is natural to ask whether in \eqref{eqn_def_affine_density} the restriction to sequences $(N_s)_{s\in\N}$ for which the inside limit exists is necessary, especially since without this restriction one would obtain a more appealing and easier to comprehend notion of density. However, it turns out that this restriction cannot be omitted. \cref{ex_2} provides the construction of a set $A\subset\N$ that satisfies
\begin{equation}
\label{eqn_bad_density}
\liminf_{a\to\infty} \sup_{m\in a\N} \Big(\limsup_{N\to\infty} \logE_{n\in[N]} \1_A(mn)\Big)\geq\frac{1}{8},
\end{equation}
but for which there exist no $x,y\in\N$ such that $\{x+y,xy\}\subset A$.

\begin{example}
\label{ex_2}
Let $N_{k,\ell}=2^{5^{k^2+\ell}}$, define
\[
A_{k,\ell} = 2^{2\ell-1}(2\N+1)\cap \big[N_{k,\ell}^{1/2}, N_{k,\ell}\big],
\]
and take $A=\bigcup_{\ell\leq k} A_{k,\ell}$.
If $m=2^{2\ell-1} q$ for some $q\in2\N+1$ then 
a straightforward calculation reveals that
\[
\limsup_{N\to\infty} \logE_{n\in [N]}\1_{A}(m n)\geq \lim_{k\to\infty} \logE_{n\in [N_{k,\ell}]}\1_{A_{k,\ell}}(2^{2\ell-1} q n)=\frac{1}{4}.
\]
On the other hand, if $m=2^{2\ell} q$ for some $q\in2\N+1$ then
\[
\limsup_{N\to\infty} \logE_{n\in [N]}\1_{A}(m n)\geq \lim_{k\to\infty} \logE_{n\in [N_{k,\ell}]}\1_{A_{k,\ell}}(2^{2\ell} q n)=\frac{1}{8}.
\]
Therefore, the set $A$ satisfies \eqref{eqn_bad_density}.
Note that for any $(k,\ell),(k',\ell')$ with $\ell\leq k$ and $\ell'\leq k'$, if either $k<k'$ or $\ell<\ell'$ then we have 
$N_{k,\ell}^4< N_{k',\ell'}$. 
This means that 
\[
x+y\in \big[N_{k,\ell}^{1/2}, N_{k,\ell}\big],~xy\in \big[N_{k',\ell'}^{1/2}, N_{k',\ell'}\big]\implies (k,\ell)=(k',\ell').
\]
So if $A$ contains $\{x+y,xy\}$, then necessarily there exist $(k,\ell)$ such that $A_{k,\ell}$ contains $\{x+y,xy\}$.
But the set $2^{2\ell-1}(2\N+1)$ cannot contain the sum and the product of the same two numbers. Indeed, all numbers in $2^{2\ell-1}(2\N+1)$ have identical $2$-adic valuation, and this $2$-adic valuation is an odd number. However, if the sum and the product of two given numbers have the same $2$-adic valuation, then this $2$-adic valuation must be even. We conclude that $2^{2\ell-1}(2\N+1)$ does not contain $\{x+y,xy\}$. Consequently, $A_{k,\ell}$ does not contain $\{x+y,xy\}$.
\end{example}

\subsection{Statement of main technical results}
\label{sec_main_technical_theorems}

While Theorems~\ref{thm_1} and~\ref{thm_2} are framed as the main results of this paper, we actually derive them from two stronger theorems formulated in this subsection.
These are averaging versions of Theorems~\ref{thm_1} and~\ref{thm_2}, whose formulations involve considerably more technical terminology, making them harder to state. However, they reveal more about the density aspects of the underlying problem.
In particular, they provide optimal correlation estimates for the patterns $\{x+Q(y),xy\}$ with respect to the corresponding averages involved (see~\cref{rem_2} for more details).

On the integers $\Z=\{\ldots,-2,-1,0,1,2,\ldots\}$, we define the shift map $\tau^b\colon \Z\to\Z$ for $b\in\Z$ and the dilation map $\sigma_a\colon\Z\to\Z$ for $a\in\N$ as
\begin{align}
\label{eqn_def_dilation_translation}    
\hspace*{3cm}\tau^b(n)=n+b\quad&\text{and}\quad \sigma_a(n)=an.
\intertext{Since $\sigma_{a_1a_2}=\sigma_{a_1}\circ\sigma_{a_2}$ and $\tau^{b_1+b_2}=\tau^{b_1}\circ \tau^{b_2}$ for all $a_1,a_1\in\N$ and $b_1,b_2\in\Z$, the multiplicative semigroup $(\N,\cdot)$ and the additive semigroup $(\Z,+)$ naturally act on $\Z$ via the representations $a\mapsto \sigma_a$ and $b\mapsto \tau^b$ respectively. In light of the distributive law}
\tag{\text{Distributive Law}}
\sigma_a\circ\tau^b&\,=\,\tau^{ab}\circ\sigma_a,
\end{align}
the joint action induced by $a\mapsto \sigma_a$ and $b\mapsto \tau^b$ corresponds to an action of the affine semigroup, i.e., the semigroup of maps $\{n\mapsto an+b: a\in\N,~b\in\Z\}$ under the operation of composition.

We denote by $\ell^\infty(\Z)$ the Banach space of all bounded complex-valued functions on $\Z$, endowed with the supremum norm $\|f\|_\infty=\sup_{n\in\Z} |f(n)|$.
The action on $\Z$ by the affine semigroup introduced above naturally extends to an anti-action on $\ell^\infty(\Z)$ via
\begin{align}
\label{eqn_affine_action_on_ellinfty}    
f\mapsto f\circ \sigma_a\quad&\text{and}\quad  f\mapsto f\circ\tau^b.
\end{align}

When endowed with pointwise addition and pointwise multiplication, the Banach space  $(\ell^\infty(\Z),\|.\|_\infty)$ becomes a (unital) $C^*$-algebra.
A \define{$C^*$-subalgebra} of $\ell^\infty(\Z)$
is a subset $\mathcal{A}\subset \ell^\infty(\Z)$
 satisfying the following four properties:
\begin{itemize}
    \item[--] $\mathcal{A}$ is closed (with respect to $\|.\|_\infty$);
    \item[--] $\1_\Z \in\A$;
    \item[--] if $f,g\in\A$ and $\lambda,\eta\in\C$ then $\lambda f+\eta g\in\A$;
    \item[--] if $f\in \A$ then $\overline{f}\in\A$.
\end{itemize}
Note that the second property in this list specifies that $\mathcal{A}$ is in fact a \define{unital} $C^*$-subalgebra, as $\1_\Z$ is a multiplicative unit. Since we consider only unital $C^*$-subalgebras throughout this paper, we omit the word ``unital'' from the definition. We caution the reader, however, that henceforth all occurrences of ``$C^*$-subalgebra'' should be understood to mean ``unital $C^*$-subalgebra.''

We denote by $\P=\{2,3,5,7,11,\ldots\}$ the set of prime numbers.

\begin{definition}
\label{def_algebra_properties}
Let $\bN=([N_s])_{s\in\N}$, where $N_1<N_2<\ldots\in\N$, and
let $\bPW=(\P_W\cap[M_{t}])_{t\in\N}$, where $M_1<M_2<\ldots\in\N$, $W\in\N$, and define
\[
\P_W=\{p\in\P: p\equiv 1\bmod W\}.
\]
If $\A\subset\ell^\infty(\Z)$ is a $C^*$-subalgebra then we say that $\A$~$\ldots$
\begin{itemize}
\item $\ldots$~is \define{separable} if it contains a countable, dense subset;
\item $\ldots$~is \define{translation invariant} if for all $f\in \A$ and $b\in\Z$ we have $f\circ \tau^b \in\A$;
\item $\ldots$~is \define{dilation invariant} if for all $f\in \A$ and $a\in\N$ we have $ f\circ\sigma_a\in\A$;
\item $\ldots$~is \define{affinely invariant} if it is both translation and dilation invariant.
\item $\ldots$~\define{admits logarithmic averages along $\bN$} if for all $f\in\A$ the limit
\[
\lim_{s\to\infty} \logE_{n\in [N_s]}f(n)
\]
exists.
In this case, we simply write
\begin{equation}
\label{eqn_inner_log_average_notation}
    \logE_{n\in\bN}f(n)=\lim_{s\to\infty} \logE_{n\in [N_s]}f(n)
\end{equation}
for all $f\in\A$, to streamline notation.
\item $\ldots$~\define{admits iterated affine correlations along $\bPW$} if for all $f,g\in\A$, $k,a\in\N$, and all polynomials $Q$ with integer coefficients, the limits
\begin{equation}
\label{eqn_iterated_averages_1}
\lim_{t_k\to\infty}\logE_{p_k\in\P_W\cap[M_{t_k}]}\cdots \lim_{t_1\to\infty} \logE_{p_1\in\P_W\cap[M_{t_1}]} \Big(\lim_{s\to\infty}\logE_{n\in [N_s]} f(a n+Q(p_k\cdots p_1))g(p_k\cdots p_1 n)\Big)
\end{equation}
exist. In this case, instead of \eqref{eqn_iterated_averages_1} we simply write
\begin{equation*}
\logE_{p\in\bPW^{*k}}\logE_{n\in\bN} f(an+Q(p))g(p n),
\end{equation*}
again to streamline notation.
In particular, we use $\E^{\log}_{p\in\bPW^{*k}}$ to abbreviate the iterated average $\E^{\log}_{p_k\in\bPW}\cdots \E^{\log}_{p_1\in\bPW}$.
When $W=1$, we simplify the notation further and write $\bP$ instead of $\bP_1$ and  $\E^{\log}_{p\in\bP^{*k}}$ instead of $\E^{\log}_{p\in\bP_1^{*k}}$.
\end{itemize}
\end{definition}

Let us now state our two main technical results, from which Theorems~\ref{thm_1} and~\ref{thm_2} can be derived.

\begin{theorem}
\label{thm_1_tech}
Let $N_1<N_2<\ldots\in\N$ and $M_1<M_2<\ldots\in\N$. Suppose $\A\subset\ell^\infty(\Z)$ is a separable, affinely invariant $C^*$-subalgebra that admits logarithmic averages along $\bN=([N_s])_{s\in\N}$ and iterated affine correlations along $\bPW=(\P_W\cap [M_t])_{t\in\N}$ for all $W\in\N$.
Let $Q$ be a polynomial with integer coefficients satisfying $Q(1)=0$.
Then for any $\1_A\in\A$ and $\epsilon>0$ there exist $k,W\in\N$ such that
\begin{equation}
\label{eqn_1_tech}
\logE_{p\in\bPW^{*k}} \logE_{n\in \bN} \1_A(n+Q(p))\1_A(pn) \geq \Big(\logE_{n\in\bN} \1_A(n)\Big)^2-\epsilon.
\end{equation}
\end{theorem}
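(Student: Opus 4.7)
The plan is to cast the problem in operator-theoretic language via the state $\mu := \logE_{n\in\bN}$ on $\A$, exploit the hypothesis $Q(1) = 0$ through a $W$-trick, and close with a mean-ergodic argument for the iterated prime average.

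\textbf{Setup and the $W$-trick.} The state $\mu$ yields (via the GNS construction) a Hilbert space $\Hilb := L^2(\A,\mu)$. Since a logarithmic average is translation invariant up to an $O(1/\log N)$ error, $\mu$ is translation invariant and each translation $V^b := (\tau^b)^*$ is a unitary on $\Hilb$; each dilation $U_a := (\sigma_a)^*$ is a bounded operator (its precise $L^2$-behavior is delicate, but the admissibility hypothesis on $\A$ guarantees that every average used below exists as a bona fide limit). The distributive law forces the skew-commutation $V^b U_a = U_a V^{ab}$. After the substitution $n\mapsto n-Q(p)$ (using translation invariance of $\mu$),
$$\logE_{n\in\bN}\1_A(n+Q(p))\1_A(pn) \;=\; \mu\bigl(\1_A\cdot V^{-Q(p)}U_p\1_A\bigr),$$
so the average in \eqref{eqn_1_tech} equals $\mu(\1_A\cdot T_{k,W}\1_A)$ with $T_{k,W} := \logE_{p\in\bPW^{*k}}V^{-Q(p)}U_p$. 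The hypothesis $Q(1)=0$ gives $Q(X)=(X-1)R(X)$ with $R\in\Z[X]$, so $W\mid Q(p)$ whenever $p\equiv 1\pmod W$. Combined with $p\equiv 1\pmod W$, every $W$-periodic function $\phi\in\Hilb$ satisfies $\phi(pn - Q(p))=\phi(n)$, and hence, writing $E_W$ for the orthogonal projection onto the $W$-periodic subspace,
$$V^{-Q(p)}U_p\,E_W \;=\; E_W\qquad\text{for every }p\in\P_W.$$
This is the only place where $Q(1)=0$ is used.

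\textbf{Structured contribution.} Decomposing $\1_A = E_W\1_A + \1_A^\perp$ with $\1_A^\perp := \1_A - E_W\1_A$, the identity above forces $T_{k,W}E_W=E_W$, and therefore the structured part contributes
$$\mu\bigl(\1_A\cdot T_{k,W}E_W\1_A\bigr) \;=\; \mu(\1_A\cdot E_W\1_A) \;=\; \mu\bigl((E_W\1_A)^2\bigr) \;\geq\; (\mu\,\1_A)^2 \;=\; \bigl(\logE_{n\in\bN}\1_A(n)\bigr)^2,$$
by the self-adjointness of $E_W$, Cauchy-Schwarz/Jensen, and $\mu\circ E_W=\mu$ (a consequence of translation invariance). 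The bound \eqref{eqn_1_tech} then follows once the residual $|\mu(\1_A\cdot T_{k,W}\1_A^\perp)| < \epsilon$ for suitable $k,W$.

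\textbf{The main obstacle.} Controlling the residual is the crux. By Cauchy-Schwarz it suffices to show $\|T_{k,W}\1_A^\perp\|_{L^2(\mu)} \to 0$. Expanding the norm-square produces a double prime average over $p,q\in\bPW^{*k}$ which---after applying the commutation $V^bU_a=U_aV^{ab}$---reduces to correlations of the form $\mu(\1_A^\perp\cdot V^c U_r\1_A^\perp)$ for composite multiplicative shifts $r$ and shifted translations $c$. The iterated structure of $\bPW^{*k}$ permits a van~der~Corput-style reduction (in the spirit of PET induction) to a genuine mean-ergodic average for the multiplicative semigroup action, whose limit is the projection of $\1_A^\perp$ onto the joint fixed space $\bigcap_{p\in\P_W}\ker(U_p-I)$. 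The hardest step---and the main obstacle---is to argue that, for $W$ chosen large, this joint fixed space is $L^2$-approximable by $W$-periodic functions, so that the projection of $\1_A^\perp$ (annihilated by $E_W$) becomes $\epsilon$-small. I would try to establish this via a structure theorem for the multiplicative factor of $(\A,\mu)$, combining the skew-commutation $V^bU_a=U_aV^{ab}$ with a Chebotarev/Dirichlet-type equidistribution argument for primes $\equiv 1\pmod W$. Picking $W$ large first (so that the joint-fixed projection of $\1_A^\perp$ has $L^2$-norm below $\epsilon$), and then $k$ large enough for the mean-ergodic average to be within $\epsilon$ of its limit, would complete the proof.
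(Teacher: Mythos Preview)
Your structured contribution is fine and indeed mirrors what the paper does (the paper passes to the full rational Kronecker factor rather than a single periodic scale, but the mechanism for exploiting $Q(1)=0$ is the same). The gap is in the residual step, and it is a real one.

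First, a concrete obstruction to your ``mean-ergodic theorem for the multiplicative semigroup'': the dilations $U_p$ are \emph{not} contractions on $L^2(\mu)$. Using $\logE_{n\in[N]}f(pn)=\logE_{n\in[N]}p\1_{p\mid n}f(n)+o(1)$, one gets $\|U_pf\|^2=\logE_{n\in\bN}p\1_{p\mid n}|f(n)|^2$, which can be as large as $p\|f\|^2$ (take $f$ supported on $p\Z$). So no standard mean-ergodic machinery applies, and there is in fact no known convergence theorem for $\logE_{p\in\bPW^{*k}}U_p$ as $k\to\infty$. Second, your residual $\1_A^\perp$ is only orthogonal to the $W$-periodic subspace; it can carry plenty of rational spectrum at denominators not dividing $W$. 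That kills the Vinogradov route you allude to: the exponential-sum cancellation along primes requires \emph{irrational} frequencies, and $\1_A^\perp$ need not have purely irrational spectrum. Your proposed fix (show that the multiplicative fixed space is approximable by $W$-periodic functions for large $W$) is exactly the statement that would need a multiplicative structure theorem, and you have not supplied one.

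The paper avoids both problems by decomposing $\1_A=f_{\str}+f_{\rnd}$ once, independently of $W$, into a locally \emph{quasiperiodic} part and a locally \emph{aperiodic} part (the latter has spectral measure supported on the irrationals). The aperiodic piece is eliminated by a multiplicative van der Corput inequality followed by Vinogradov/Rhin, which now applies precisely because the spectrum is irrational. For the quasiperiodic piece, the paper does \emph{not} use any mean ergodic theorem; instead it proves that for some $k$ the inequality $\logE_{p\in\bPW^{*k}}\logE_{n\in\bN}f_{\str}(n)f_{\str}(pn)\geq(\logE_{n\in\bN}f_{\str}(n))^2-\epsilon$ holds, via a Ramsey-theoretic pigeonhole combined with \Turan{}--Kubilius. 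The latter is the genuine substitute for the multiplicative ergodic theorem you are missing.
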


\begin{proof}[Proof that \cref{thm_1_tech} implies \cref{thm_1}]
Suppose $A\subset\N$ with $\overline{\delta}(A)>0$ is given. Let $N_1<N_2<\ldots\in\N$ be any sequence such that $\overline{\delta}(A)=\lim_{s\to\infty}\E^{\log}_{n\in[N_s]}\1_A(n)$.
Let $\A\subset\ell^\infty(\Z)$ denote the smallest affinely invariant $C^*$-subalgebra that contains $\1_A$. Since finite linear combinations of products of translations and dilations of $\1_A$ are dense in $\A$, we see that $\A$ is separable. Using a standard diagonalization argument and replacing $(N_s)_{s\in\N}$ with a subsequence of itself if necessary, we can assume without loss of generality that $\A$ admits logarithmic averages along $\bN=([N_s])_{s\in\N}$. Let $M_1<M_2<\ldots\in\N$ be any sequence such that $\A$ admits iterated affine correlations along $\bPW=(\P_W\cap [M_t])_{t\in\N}$ for all $W\in\N$; again such a sequence exists because $\A$ is separable.
If we now apply \cref{thm_1_tech} then we obtain
\[
\logE_{p\in\bPW^{*k}} \logE_{n\in \bN} \1_A(n+Q(p))\1_A(pn) \geq \big(\overline{\delta}(A)\big)^2-\epsilon.
\]
So as long as $\epsilon<(\overline{\delta}(A))^2$, we can find some $k,W\in\N$, $p_1,\ldots,p_k\in\P_W$, and $n\in\N$ such that 
\[
\1_A(n+Q(p_1\cdots p_k))\1_A(p_1\cdots p_kn)>0.
\]
Taking $x=n$ and $y=p_1\cdots p_k$ proves $\{x+Q(y),xy\}\subset A$ as desired.
\end{proof}

We say a sequence $a_1,a_2,\ldots\in\N$ is \define{divisible} if for all $m\in\N$ the relation $a_i\equiv 0\bmod m$ holds for all but finitely many $i\in\N$.

\begin{theorem}
\label{thm_2_tech}
Let $N_1<N_2<\ldots\in\N$ and $M_1<M_2<\ldots\in\N$. Suppose $\A\subset\ell^\infty(\Z)$ is a separable, affinely invariant $C^*$-subalgebra that admits logarithmic averages along $\bN=([N_s])_{s\in\N}$ and iterated affine correlations along $\bP=(\P\cap [M_t])_{t\in\N}$.
Let $Q$ be a polynomial with integer coefficients satisfying $Q(0)=0$.
Suppose $\1_A\in\A$, let $a_1,a_2,\ldots\in\N$ be a divisible sequence, and define
\[
\delta=\limsup_{j\to\infty}\Big(\logE_{n\in\bN} \1_A(a_j n)\Big).
\]
Then for every $\epsilon>0$ there exist $k\in\N$ and $\{u,vu\}\subset \{a_j:j\in\N\}$ such that
\begin{equation}
\label{eqn_2_tech}
\logE_{p\in\bP^{*k}}\logE_{n\in\bN} \1_A(un+Q(vp))\1_A(uvpn) \geq \delta^2-\epsilon.
\end{equation}
\end{theorem}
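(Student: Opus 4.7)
The strategy is to exploit the divisibility of $(a_j)$ together with the hypothesis $Q(0)=0$ to reduce Theorem~\ref{thm_2_tech} to a two-function variant of Theorem~\ref{thm_1_tech}.

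First I would pass, via a diagonal argument built on the divisibility hypothesis, to a subsequence of $(a_j)$ --- which I relabel as $(a_j)$ itself --- satisfying: (i) $\lim_{j\to\infty}\logE_{n\in\bN}\1_A(a_j n)=\delta$ (realizing the $\limsup$ as a true limit), (ii) $a_1\mid a_2\mid a_3\mid\cdots$, and (iii) for every $j$ the quotient $a_{j+1}/a_j$ is divisible by an arbitrarily high power of $a_j$. Such a subsequence exists because divisibility guarantees $a_i\equiv 0\pmod m$ eventually for every $m$. Fix a large $i$, set $u:=a_i$, choose $j>i$ with $u^2\mid a_j$, and let $v:=a_j/u$, so that $u\mid v$ and $uv=a_j\in\{a_\ell\}$. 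Because $Q(y)=\sum_{r=1}^d c_r y^r$ (using $Q(0)=0$) and $u\mid v^r$ for every $r\geq 1$, we obtain $u\mid Q(vp)$ for every integer $p$. Define the integer polynomials $Q^{\ast}(p):=Q(vp)/u$ and $\tilde Q(p):=Q^{\ast}(p)-Q^{\ast}(1)$, so that $\tilde Q$ has integer coefficients and $\tilde Q(1)=0$.

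Set $c:=Q^{\ast}(1)=Q(v)/u$, $\tilde g:=\1_A\circ\sigma_u\circ\tau^c$, and $h:=\1_A\circ\sigma_{uv}$; both lie in $\A$ by affine invariance. A direct computation gives
\[
\1_A(un+Q(vp))\,\1_A(uvpn)\ =\ \tilde g\bigl(n+\tilde Q(p)\bigr)\,h(pn),
\]
so the conclusion of Theorem~\ref{thm_2_tech} reduces to the estimate
\[
\logE_{p\in\bP^{*k}}\logE_{n\in\bN}\,\tilde g\bigl(n+\tilde Q(p)\bigr)\,h(pn)\ \geq\ \delta^{2}-\epsilon.
\]
By the asymptotic shift-invariance of logarithmic averages along $\bN$ one has $\logE_{n\in\bN}\tilde g(n)=\logE_{n\in\bN}\1_A(un)$ and $\logE_{n\in\bN}h(n)=\logE_{n\in\bN}\1_A(uvn)$, both of which lie within $\epsilon/10$ of $\delta$ once $i$ is large enough.

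The main obstacle is the remaining estimate, which is a \emph{two-function} extension of Theorem~\ref{thm_1_tech}: for all $f_1,f_2\in\A$ with $\|f_i\|_\infty\leq 1$ and every integer polynomial $P$ with $P(1)=0$, and every $\eta>0$, there exists $k=k(\eta)\in\N$ with
\[
\logE_{p\in\bP^{*k}}\logE_{n\in\bN}\,f_1\bigl(n+P(p)\bigr)\,f_2(pn)\ \geq\ \bigl(\logE_{n\in\bN}f_1\bigr)\bigl(\logE_{n\in\bN}f_2\bigr)-\eta.
\]
Observe that Theorem~\ref{thm_2_tech} quantifies only over $\bP$ rather than $\bP_W$, so this must be proved at $W=1$; the local obstructions that appear to force $W>1$ in Theorem~\ref{thm_1_tech} should be absorbed here by the highly divisible dilations $u$ and $v$. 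I would attack the two-function statement by adapting the ergodic / Fourier-analytic framework behind Theorem~\ref{thm_1_tech}: the iterated prime average $\logE_{p\in\bP^{*k}}$ should project, as $k\to\infty$, onto the multiplicatively invariant component of $\A$ with respect to the state induced by $\logE_{n\in\bN}$, while shifted multiplicative-function estimates in the spirit of \Matomaki{}--\Radziwill{} decouple the additive shift $f_1(n+P(p))$ from the multiplicative dilation $f_2(pn)$, yielding the desired product $\alpha\beta$ in the limit. A cleaner alternative, if available, would be a polarization reduction from the one-function statement of Theorem~\ref{thm_1_tech}, but the asymmetry between $f_1$ and $f_2$ together with the $(\logE\1_A)^2$ form of the one-function conclusion makes this route delicate.
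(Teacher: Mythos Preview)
Your reduction to a ``two-function extension of \cref{thm_1_tech} at $W=1$'' is a genuine gap: the statement you formulate is \emph{false} in general. Take $f_1=\1_{3\N+1}$, $f_2=\1_{3\N+2}$, $P(y)=y-1$. For any product $p$ of primes coprime to~$3$ one checks that $f_1(n+p-1)f_2(pn)=0$ for all~$n$, so the left side is~$0$ while $(\logE f_1)(\logE f_2)=\tfrac{1}{9}$. Thus no general two-function estimate with $W=1$ can rescue the argument, and the appeals to a ``projection onto the multiplicatively invariant component'' or \Matomaki{}--\Radziwill{} are not a substitute for an actual proof.

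Your intuition that the highly divisible dilations $u,v$ should absorb the local obstructions is correct, but the \emph{order of quantifiers} in your scheme prevents you from using it. You fix $u$ and $v$ first and only then try to prove the correlation estimate for the resulting $\tilde g,h,\tilde Q$; at that point the relevant ``period'' of the structured part of $\1_A$ need not divide $u$ or $v$, and you have no freedom left. The paper's proof proceeds in the opposite order: it first applies the structure theorem to write $\1_A=f_{\str}+f_{\rnd}$ (independently of any $u,v$), uses \cref{thm_rnd_component} to discard all terms involving $f_{\rnd}$, and invokes \cref{thm_str_component} to obtain $k$ and an \emph{infinite} set $I$ of admissible indices. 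Only then does it read off, from the local quasiperiodicity of $f_{\str}$, a modulus $q$ such that shifts by multiples of $q$ are negligible; \emph{after} knowing $q$ it selects $j\in I$ with $qa_i\mid a_j$, so that $Q(vp)\in q\Z$ (using $Q(0)=0$) and the polynomial shift disappears for $f_{\str}$. The infinite set $I$ is what makes this adaptive choice of $v$ possible while keeping the earlier correlation bound; your approach has no analogue of this step.
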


\begin{proof}[Proof that \cref{thm_2_tech} implies \cref{thm_2}]
If $\AMdens(A)>0$ then by the definition of the density $\AMdens$ there exists a sequence $N_1<N_2<\ldots\in\N$ for which
\[
\liminf_{a\to\infty} \sup_{m\in a\N} \Big(\underbrace{\lim_{s\to\infty} \logE_{n\in[N_s]}}_{\E^{\log}_{n\in\bN} } \1_A(mn)\Big)>0.
\]
It follows that there exists a divisible sequence $a_1,a_2,\ldots\in\N$ such that
\[
\limsup_{j\to\infty}\Big(\logE_{n\in\bN} \1_A(a_j n)\Big)>0.
\]

As in the preceding proof, let $\A\subset\ell^\infty(\Z)$ denote the smallest affinely invariant $C^*$-subalgebra that contains $\1_A$, assume without loss of generality that $\A$ admits logarithmic averages along $\bN=([N_s])_{s\in\N}$, and let 
$M_1<M_2<\ldots\in\N$ be an arbitrary increasing sequence such that $\A$ admits iterated affine correlations along $\bP=(\P\cap [M_t])_{t\in\N}$.  
Invoking \cref{thm_2_tech} with $\epsilon$ sufficiently small, we can find $k\in\N$ and $u,v\in\N$ such that
\[
\logE_{p\in\bP^{*k}}\logE_{n\in\bN} \1_A(un+Q(vp))\1_A(uvpn) >0.
\]
In particular, there exist $p_1,\ldots,p_k\in\P$, and $n\in\N$ such that 
\[
\1_A(un+Q(vp_1\cdots p_k))\1_A(uvp_1\cdots p_kn)>0.
\]
Taking $x=un$ and $y=vp_1\cdots p_k$ shows that $\{x+Q(y),xy\}\subset A$.
\end{proof}

For an outline of the main steps in the proofs of Theorems~\ref{thm_1_tech} and~\ref{thm_2_tech} see \cref{sec_outline_of_proof}.

\begin{remark}
\label{rem_2}
The lower bounds provided on the right hand sides of \eqref{eqn_1_tech} and \eqref{eqn_2_tech} are essentially optimal. To see this, one can, for example, consider $A$ to be the set of multiplicatively even numbers, i.e., all positive integers with an even number of prime factors (counted with multiplicity). It then follows form the main result in \cite{Tao16} that equality holds for both \eqref{eqn_1_tech} and \eqref{eqn_2_tech} when setting $\epsilon=0$,
\end{remark}

\paragraph{Acknowledgments.}We thank Joel Moreira for providing helpful comments on an earlier draft of this paper. The author was supported by the Swiss National Science Foundation grant TMSGI2-211214.

\section{Preliminaries}
\label{sec_preliminaries}

In this section we collect preliminaries and preparatory results from number theory, ergodic theory, and Fourier analysis which are needed for the proofs of Theorems~\ref{thm_1_tech} and~\ref{thm_2_tech}.

\subsection{Basic properties of logarithmic averages}

We use logarithmic averages in the formulations of Theorems~\ref{thm_1_tech} and~\ref{thm_2_tech} and, as we have seen in \cref{ex_1}, it is in general not possible to replace them by \Cesaro{} averages.
This naturally prompts the question: which properties of logarithmic averages are necessary for our argument? An answer is given by the following lemma, which identifies a property of logarithmic averages that does not hold for \Cesaro{} averages, and which will be used repeatedly in the proofs of our main results.

\begin{lemma}
\label{lem_dilation_log_averages}
For any $q,N\in\N$ and any $1$-bounded $f\colon\Z\to\C$ we have
\[
\logE_{n\in [N]} f(n) = \logE_{n\in [N]} q \1_{q\mid n} f\big(\tfrac{n}{q}\big) + \Oh\bigg(\frac{\log q}{\log N}\bigg).
\]
\end{lemma}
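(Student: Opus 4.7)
The plan is to expand both logarithmic averages directly from the definition and recognize that they share the common denominator $H_N := \sum_{n=1}^N 1/n$, so that proving the claimed identity reduces to comparing the two numerators. First I would write
\[
\logE_{n\in[N]} f(n) = \frac{1}{H_N}\sum_{n=1}^N \frac{f(n)}{n},\qquad \logE_{n\in[N]} q\1_{q\mid n} f(n/q) = \frac{1}{H_N}\sum_{\substack{1\leq n\leq N\\ q\mid n}}\frac{q\,f(n/q)}{n}.
\]
The weight $q/n$ attached to $f(n/q)$ is exactly what is needed so that the substitution $n=qm$ produces $1/m$: the right-hand numerator becomes $\sum_{m=1}^{\lfloor N/q\rfloor} f(m)/m$. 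Thus the difference of the two averages equals
\[
\frac{1}{H_N}\sum_{\lfloor N/q\rfloor < n\leq N}\frac{f(n)}{n},
\]
i.e.\ precisely the tail of the harmonic-weighted sum of $f$ on the range $(N/q, N]$.

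Next I would bound this tail. Using $|f|\leq 1$ and the standard estimate $\sum_{A<n\leq N} 1/n = \log(N/A) + O(1/A)$, the numerator has absolute value at most $\log q + O(1)$ (and trivially at most $H_N$ in the degenerate case $q>N$, when the substituted sum is empty). Combined with the standard asymptotic $H_N = \log N + O(1)$, this yields an error of order $(\log q + O(1))/(\log N + O(1)) = O(\log q/\log N)$, which is the desired bound.

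There is no real obstacle here: the lemma is essentially a bookkeeping identity. The only subtlety to handle cleanly is the case $q>N$, where $\lfloor N/q\rfloor = 0$ and the sum on the right-hand side is empty; there one just notes that the trivial bound $|\cdot|\leq 1$ on the left is absorbed by $\log q/\log N\geq 1$. The weight $q\1_{q\mid n}$ in the statement is designed precisely to make the change of variables $n=qm$ preserve the logarithmic weighting, which is the conceptual content of the lemma — it says that logarithmic averages are invariant under the dilation $\sigma_q$ up to an error depending only on $\log q/\log N$, and this is exactly the property that will allow dilations by $q\in\N$ to be absorbed into logarithmic averages elsewhere in the paper.
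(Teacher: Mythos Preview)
Your proof is correct and is essentially the same as the paper's: both expand the two averages over the common denominator $H_N$, perform the substitution $n=qm$, and observe that the discrepancy between the numerators is the harmonic tail $\sum_{\lfloor N/q\rfloor<n\leq N} f(n)/n=\Oh(\log q)$, then divide by $H_N\asymp\log N$. The only cosmetic difference is that the paper first extends the range to $[qN]$ and then truncates back to $[N]$, whereas you truncate directly to $[\lfloor N/q\rfloor]$; the resulting tail is identical.
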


\begin{proof}
We have
\begin{align*}
\sum_{n\in [N]} \frac{f(n)}{n}
= \sum_{\substack{n\in [qN] \\ q\mid n}} \frac{ f\big(\tfrac{n}{q}\big)}{\tfrac{n}{q}}
= \sum_{n\in [qN]} \frac{q \1_{q\mid n} f\big(\tfrac{n}{q}\big)}{n} 
= \sum_{n\in [N]} \frac{q \1_{q\mid n} f\big(\tfrac{n}{q}\big)}{n} +\Oh(\log q).
\end{align*}
Hence,
\begin{align*}
\logE_{n\in [N]} f(n)&= \frac{1}{\big(\sum_{n\in [N]}\frac{1}{n}\big)} \Bigg(\sum_{n\in [N]} \frac{f(n)}{n} \Bigg)
\\
&= \frac{1}{\big(\sum_{n\in [N]}\frac{1}{n}\big)} \Bigg( \sum_{n\in [N]} \frac{q \1_{q\mid n} f\big(\tfrac{n}{q}\big)}{n}\Bigg) + \Oh\bigg(\frac{\log q}{\log N}\bigg)
\\
&=\logE_{n\in [N]} q \1_{q\mid n} f\big(\tfrac{n}{q}\big) + \Oh\bigg(\frac{\log q}{\log N}\bigg),
\end{align*}
as desired.
\end{proof}

\subsection{Preliminaries from multiplicative number theory}

A corollary of the \Turan{}-Kubilius inequality (cf.~\cite[Chapter~4]{Elliott79}) states that for any finite set of primes $P\subset\P\cap [N]$ one has  
\begin{equation}
\label{eqn_TK}
\sum_{n\in[N]} \left(\, \sum_{p \in P} \mathbf{1}_{p \mid n} - \sum_{p \in P} \frac{1}{p} \right)^2 \leq 3N\Bigg( \sum_{p \in P} \frac{1}{p} \Bigg),
\end{equation}
where $\mathbf{1}_{p \mid n}$ denotes the function that is $1$ if $p$ divides $n$ and $0$ otherwise.  
Dividing \eqref{eqn_TK} by $N(\sum_{p \in P} 1/p )^2$, we obtain the following equivalent version:
\begin{equation}
\label{eqn_TK_2}
\E_{n\in[N]} \Big( \logE_{p \in P} p\mathbf{1}_{p \mid n} - 1 \Big)^2 \leq 3\Bigg( \sum_{p \in P} \frac{1}{p} \Bigg)^{-1} .
\end{equation}

Note that in \eqref{eqn_TK_2}, it is not possible to replace the logarithmic average in the variable~$p$ with a \Cesaro{} average. 
Since our main results utilize logarithmic averages in all variables, we require a variant of \eqref{eqn_TK_2} that uses logarithmic averages in the variable~$n$ too.
One way to obtain such a variant would be to simply derive it from \eqref{eqn_TK_2} using partial summation. Another approach -- one we take -- is to give a short and self-contained proof.

\begin{proposition}[Logarithmically averaged \Turan{}-Kubilius inequality]
\label{prop_log_TK}
Let $N\in\N$ and $P\subset \P\cap [N]$.
Then
\[
\logE_{n\in [N]}\Big(\logE_{p\in P} \big(p\1_{p\mid n} -1\big)\Big)^2
\leq 9 \Bigg(\sum_{p\in P}\frac{1}{p} \Bigg)^{-1}.
\]
\end{proposition}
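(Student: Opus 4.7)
The plan is to reduce the stated inequality to a log-averaged second-moment estimate for the prime-divisor counting function $\omega_P(n) := \sum_{p \in P}\1_{p \mid n}$ and then handle the resulting moments via Mertens' theorem. Setting $S = \sum_{p \in P} 1/p$, the inner logarithmic average simplifies to
\[
\logE_{p \in P}\bigl(p\1_{p \mid n} - 1\bigr) = \frac{\omega_P(n) - S}{S},
\]
so the claim is equivalent to $\logE_{n \in [N]}(\omega_P(n) - S)^2 \leq 9S$.

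To evaluate this, I would use $\omega_P(n)^2 = \omega_P(n) + \sum_{p \neq q \in P}\1_{pq \mid n}$ together with the direct computation $\logE_{n\in[N]}\1_{m \mid n} = \alpha_m/m$, where $\alpha_m := H_{\lfloor N/m\rfloor}/H_N \in [0,1]$ and $H_k := \sum_{j=1}^k 1/j$. The trivial bound $\alpha_{pq} \leq 1$ gives $\logE_n\omega_P^2 \leq S + S^2$. Substituting into the identity $\logE_n(\omega_P - S)^2 = \logE_n \omega_P^2 - 2S\logE_n\omega_P + S^2$ yields
\[
\logE_n(\omega_P - S)^2 \leq S(1 + 2X), \quad \text{where} \quad X := \sum_{p \in P}\frac{1 - \alpha_p}{p},
\]
so it suffices to prove the uniform bound $X \leq 4$.

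To bound $X$, I would invoke the elementary inequality $H_k \geq \log(k+1)$, which, combined with the universal bound $H_N - \log N \leq 1$ for all $N \geq 1$, gives
\[
H_N - H_{\lfloor N/p\rfloor} \leq H_N - \log(N/p) = (H_N - \log N) + \log p \leq 1 + \log p.
\]
Consequently,
\[
X \leq \frac{1}{H_N}\sum_{p \in P}\frac{1 + \log p}{p} \leq \frac{S}{H_N} + \frac{1}{H_N}\sum_{p \leq N}\frac{\log p}{p}.
\]
Applying the Mertens estimate $\sum_{p \leq N}(\log p)/p \leq \log N + O(1) \leq H_N + O(1)$ and using $S \leq H_N$ (which holds for any $P \subset \P \cap [N]$ by comparison with the harmonic series), this yields $X \leq 2 + O(1/H_N)$, which is at most $4$ uniformly in $N$.

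The main technical obstacle is the precise tracking of absolute constants from Mertens' theorem in order to verify $X \leq 4$ uniformly. However, the effective bound on $X$ is close to $2$, giving a heuristic variance estimate near $3S$, and the substantial slack up to the stated $9S$ comfortably accommodates any explicit Mertens constants; degenerate small-$N$ cases (where $H_N$ is of bounded size) could also be dispatched by direct inspection if needed.
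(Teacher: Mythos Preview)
Your proposal is correct and follows essentially the same approach as the paper: both arguments expand the square, bound the second-moment (off-diagonal) contribution via $\logE_{n\in[N]}\1_{m\mid n}\leq 1/m$, and control the first-moment defect using Mertens' first theorem $\sum_{p\leq N}\tfrac{\log p}{p}\leq \log N+O(1)$. Your reformulation in terms of $\omega_P(n)=\sum_{p\in P}\1_{p\mid n}$ and the harmonic numbers $H_k$ is a cosmetic repackaging of the paper's direct computation; in particular your quantity $X$ corresponds exactly to the paper's term $\logE_{p\in P}\tfrac{\log p}{\log N}$ (up to the same Mertens bound), and your constant tracking $S/H_N\leq 1$, $\log N/H_N\leq 1$, $2/H_N\leq 2$ yielding $X\leq 4$ mirrors the paper's route to the final factor $9$.
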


\begin{proof}
By expanding the square, we obtain
\begin{align*}
\logE_{n\in [N]}\Big(\logE_{p\in P} p\1_{p\mid n} -1\Big)^2
&=
\logE_{n\in [N]} \Big(\logE_{p\in P} p\1_{\mathrm{lcm}(p,q)\mid n}\Big)^2
-2 \logE_{n\in [N]} \Big(\logE_{p\in P} p\1_{p\mid n}\Big) +1
\\
&=
\logE_{p,q\in P} pq\Big(\logE_{n\in [N]} \1_{\mathrm{lcm}(p,q)\mid n}\Big)
-2 \logE_{p\in P} p\Big(\logE_{n\in [N]} \1_{p\mid n}\Big) +1.
\end{align*}
For the first term, we can estimate from above using
\begin{align*}
\logE_{n\in [N]} \1_{\mathrm{lcm}(p,q)\mid n}
&= \frac{1}{\big(\sum_{n\in[N]}\frac{1}{n}\big)} \sum_{n\in[N]} \frac{\1_{\mathrm{lcm}(p,q)\mid n}}{n} 
\leq \frac{1}{\mathrm{lcm}(p,q)}.
\end{align*}
For the second term, we can establish a bound from below by 
\begin{align*}
\logE_{n\in [N]} \1_{p\mid n}
&= \frac{1}{\big(\sum_{n\in[N]}\frac{1}{n}\big)} \sum_{n\in[N]} \frac{\1_{p\mid n}}{n}
\\
&= \frac{1}{\big(\sum_{n\in[N]}\frac{1}{n}\big)} \sum_{1\leq  n \leq N/p} \frac{1}{pn}
\\
&= \frac{1}{p} - \frac{1}{\big(\sum_{n\in[N]}\frac{1}{n}\big)} \sum_{N/p<  n \leq N} \frac{1}{pn}
\\
&\geq  \frac{1}{p} - \frac{\log (p)}{p\log(N)},
\end{align*}
where the last estimate is quickly derived from a standard approximation of the harmonic sum $\frac{1}{2M}-\frac{1}{8M^2}\leq (\sum_{n\in[M]}\frac{1}{n})-\log(M)-\gamma\leq \frac{1}{2M}$, where $\gamma$ is the Euler-Mascheroni constant.
It follows that
\begin{align*}
\logE_{n\in [N]}\Big(\logE_{p\in P} p\1_{p\mid n} -1\Big)^2
&=
\logE_{p,q\in P} \frac{pq}{\mathrm{lcm}(p,q)}
-1 + 2\logE_{p\in P} \frac{\log (p)}{\log(N)}.
\end{align*}
To finish the proof, note that
\[
\logE_{p,q\in P} \frac{pq}{\mathrm{lcm}(p,q)} \leq 1+ \left( \sum_{p \in P} \frac{1}{p} \right)^{-1},
\]
and that
\[
\logE_{p\in P} \frac{\log (p)}{\log(N)}=
\frac{\big(\sum_{p\in P}\frac{\log(p)}{p}\big)}{\big(\sum_{p\in P}\frac{1}{p}\big)\log(N)}.
\]
By Mertens' first theorem and since $P\subset \P\cap [N]$, we have
\[
\sum_{p\in P}\frac{\log(p)}{p}\leq \sum_{p\in \P\cap[N]}\frac{\log(p)}{p}\leq  \log(N)+ 2.
\]
Therefore, 
\[
\logE_{p\in P} \frac{\log (p)}{\log(N)}
\leq 4 \left( \sum_{p \in P} \frac{1}{p} \right)^{-1}.
\]
The claim follows by combining the above estimates.
\end{proof}

The way we employ \cref{prop_log_TK} in the subsequent proofs is via the following two corollaries.

\begin{corollary}
\label{cor_log_TK}
For $W\in\N$ and any bounded function $f\colon\Z\to\C$ we have
\[
\limsup_{M\to\infty}\limsup_{N\to\infty}\Big|\logE_{n\in[N]} f(n) - \logE_{p\in\P_W\cap [M]}\logE_{n\in[N]} f(pn)\Big|=0.
\]
\end{corollary}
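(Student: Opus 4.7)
Assuming without loss of generality that $\|f\|_\infty\leq 1$, my plan is to reduce the corollary to the logarithmically averaged \Turan{}-Kubilius inequality (\cref{prop_log_TK}) via a change of variables followed by a single application of Cauchy-Schwarz. I start by applying \cref{lem_dilation_log_averages} with $q=p$ to the $1$-bounded function $n\mapsto f(pn)$, which gives, for each prime $p$,
\[
\logE_{n\in[N]} f(pn) \;=\; \logE_{n\in[N]} p\1_{p\mid n}\,f(n) \;+\; \Oh\!\left(\tfrac{\log p}{\log N}\right).
\]
Averaging this identity logarithmically over $p\in\P_W\cap[M]$ and swapping the two (finite) logarithmic averages then yields
\[
\logE_{p\in\P_W\cap[M]}\logE_{n\in[N]} f(pn)
\;=\; \logE_{n\in[N]} f(n)\,\psi_{M,W}(n) \;+\; \Oh\!\left(\tfrac{\log M}{\log N}\right),
\]
where $\psi_{M,W}(n):=\logE_{p\in\P_W\cap[M]} p\1_{p\mid n}$ and the error estimate uses the crude bound $\log p\leq\log M$ throughout $\P_W\cap[M]$.

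Next, the quantity inside the absolute value in the corollary becomes $\logE_{n\in[N]} f(n)\bigl(1-\psi_{M,W}(n)\bigr)$ up to the same $\Oh(\log M/\log N)$ error. Applying Cauchy-Schwarz in the logarithmic average over $n\in[N]$ together with \cref{prop_log_TK} (taking $P=\P_W\cap[M]$), I estimate this main term by
\[
\sqrt{\logE_{n\in[N]}\bigl(\psi_{M,W}(n)-1\bigr)^{2}}
\;\leq\; 3\left(\sum_{p\in\P_W\cap[M]}\tfrac{1}{p}\right)^{-1/2}.
\]

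Finally, I take $\limsup_{N\to\infty}$ first, which eliminates the $\Oh(\log M/\log N)$ error while $M$ is held fixed, and then let $M\to\infty$; the remaining bound tends to zero because $\sum_{p\in\P_W\cap[M]}1/p\to\infty$ as $M\to\infty$, which is Mertens' theorem for primes in the arithmetic progression $1\bmod W$ (a consequence of Dirichlet's theorem, which guarantees that $\P_W$ has Dirichlet density $1/\varphi(W)>0$). I expect no serious obstacle here; the only care is in the bookkeeping around the change of variables, which is already fully packaged inside \cref{lem_dilation_log_averages}, and the sole external input beyond \cref{prop_log_TK} is the divergence of $\sum_{p\in\P_W}1/p$.
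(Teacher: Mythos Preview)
Your proof is correct and follows essentially the same route as the paper's: apply \cref{lem_dilation_log_averages} to rewrite $\logE_{n\in[N]} f(pn)$ as $\logE_{n\in[N]} p\1_{p\mid n} f(n)$, then use Cauchy--Schwarz together with \cref{prop_log_TK} and the divergence of $\sum_{p\in\P_W}1/p$. If anything you are slightly more explicit than the paper in tracking the $\Oh(\log M/\log N)$ error from the change of variables (the paper suppresses it), and you might add the harmless observation that $\P_W\cap[M]\subset\P\cap[N]$ once $N\geq M$, which is needed to invoke \cref{prop_log_TK} verbatim.
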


\begin{proof}
Using \cref{lem_dilation_log_averages} and the Cauchy-Schwarz inequality, we obtain
\begin{align*}
\Big|\logE_{n\in[N]} f(n) - \logE_{p\in\P_W\cap [M]}\logE_{n\in[N]} f(pn)\Big|
&=
\Big|\logE_{n\in[N]} f(n) - \logE_{p\in\P_W\cap [M]}\logE_{n\in[N]} p\1_{p\mid n} f(n)\Big|
\\
&=
\Big|\logE_{n\in[N]} f(n) \Big( \logE_{p\in\P_W\cap [M]} \big(1- p\1_{p\mid n} \big)\Big)\Big|
\\
&\leq
\Big(\logE_{n\in[N]} |f(n)|^2 \Big)^{\frac{1}{2}}
\Big(\logE_{n\in[N]}\Big| \logE_{p\in\P_W\cap [M]} \big(1- p\1_{p\mid n} \big)\Big|^2\Big)^{\frac{1}{2}}.
\end{align*}
The claim now follows from \cref{prop_log_TK} together with the fact that
\[
\sum_{p\in\P_W}\frac{1}{p}=\infty,
\]
which follows from the prime number theorem in arithmetic progressions.
\end{proof}

\begin{corollary}
\label{cor_log_TK_2}
For $W\in\N$ and any bounded function $f\colon\Z\to\C$ we have for any $k\in\N$ that
\[
\limsup_{M_k\to\infty}\cdots \limsup_{M_1\to\infty}\limsup_{N\to\infty}\Big|\logE_{n\in[N]} f(n) - \logE_{p_k\in\P_W\cap [M_k]}\cdots \logE_{p_1\in\P_W\cap [M_1]}\logE_{n\in[N]} f(p_k\cdots p_1n)\Big|=0.
\]
\end{corollary}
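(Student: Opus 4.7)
The plan is to reduce to \cref{cor_log_TK} via a telescoping decomposition. For $1 \leq j \leq k$ set
\[
T_j = \logE_{p_j\in\P_W\cap[M_j]}\cdots\logE_{p_1\in\P_W\cap[M_1]}\logE_{n\in[N]} f(p_j\cdots p_1 n),
\]
and let $T_0 = \logE_{n\in[N]} f(n)$. The target quantity is $T_0 - T_k$, which I would bound by $\sum_{j=1}^{k}|T_{j-1} - T_j|$ and then control each summand separately. For fixed $j$, the log average over $p_j$ commutes with the log averages over $p_1,\ldots,p_{j-1}$ and $n$, so
\[
T_{j-1} - T_j = \logE_{p_{j-1}\in\P_W\cap[M_{j-1}]}\cdots\logE_{p_1\in\P_W\cap[M_1]}\Big(\logE_{n\in[N]} g_{\vec{p}}(n) - \logE_{p_j\in\P_W\cap[M_j]}\logE_{n\in[N]} g_{\vec{p}}(p_j n)\Big),
\]
where $g_{\vec{p}}(n) = f(p_{j-1}\cdots p_1 n)$.

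Next I would apply \cref{cor_log_TK} to the bounded function $g_{\vec{p}}$. Inspecting its proof yields the explicit bound
\[
\Big|\logE_{n\in[N]} g_{\vec{p}}(n) - \logE_{p_j\in\P_W\cap[M_j]}\logE_{n\in[N]} g_{\vec{p}}(p_j n)\Big| \leq \|f\|_\infty \Big(\logE_{n\in[N]}\Big|\logE_{p\in\P_W\cap[M_j]}\bigl(1-p\1_{p\mid n}\bigr)\Big|^2\Big)^{1/2} + \Oh\!\left(\tfrac{\log M_j}{\log N}\right),
\]
whose dependence on $f$ and $\vec{p}$ is only through $\|f\|_\infty$. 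In particular this bound is uniform in $p_1,\ldots,p_{j-1}$, so averaging over those variables has no effect; by \cref{prop_log_TK} and the divergence of $\sum_{p\in\P_W} 1/p$, it tends to zero as $N\to\infty$ and then $M_j\to\infty$.

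Putting this together, $|T_{j-1}-T_j|$ does not depend on $M_{j+1},\ldots,M_k$ and has iterated limsup $\limsup_{M_j\to\infty}\limsup_{N\to\infty}$ equal to zero uniformly in $M_1,\ldots,M_{j-1}$, so $\limsup_{M_k\to\infty}\cdots\limsup_{M_1\to\infty}\limsup_{N\to\infty} |T_{j-1}-T_j| = 0$ for each $j$. Subadditivity of $\limsup$ then yields the result. The only real obstacle is the uniformity bookkeeping---verifying that \cref{cor_log_TK} applied to the parameter-dependent function $g_{\vec{p}}$ produces a bound independent of $\vec{p}$---which is exactly what the Cauchy--Schwarz step in its proof delivers, since the Tur\'an--Kubilius factor involves no $f$.
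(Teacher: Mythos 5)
Your proposal is correct and is exactly the paper's argument: the paper's proof is the one-line statement that the result ``follows by iterating \cref{cor_log_TK},'' and your telescoping decomposition together with the uniformity check (the bound from \cref{cor_log_TK} depends on $g_{\vec{p}}$ only through $\|f\|_\infty$, since the \Turan{}--Kubilius factor is $f$-free) is precisely the content of that iteration, spelled out carefully.
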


\begin{proof}
This follows by iterating \cref{cor_log_TK}.
\end{proof}

\subsection{Preliminaries concerning exponential sums along primes}

Throughout, we use the abbreviation $e(x)=e^{2\pi i x}$ for $x\in\R$. 
It follows from the classical works of Vinogradov \cite{Vinogradov57,Vinogradov58} and Rhin \cite{Rhin73} that for any real polynomial $Q(x)=a_dx^d+\ldots+a_1x+a_0$ with the property that at least one of the coefficients $a_1,\ldots,a_d$ is an irrational number one has
\begin{equation}
\label{eqn_rhin}
\lim_{M\to\infty}\E_{p\in\P\cap[M]} e(Q(p)) =0.
\end{equation}
For the proof of our main results, we require a version of~\eqref{eqn_rhin} using logarithmic averages; we include a proof for completeness.

\begin{lemma}
\label{lem_log_vinogradov}
For any $a\in\N$, $b\in\Z$, any non-constant polynomial with rational coefficients $Q$, and any irrational real number $\alpha$ we have
\begin{equation}
\label{eqn_log_vinogradov}
\lim_{M\to\infty}\logE_{p\in\P\cap[M]} \1_{a\Z+b}(p)\, e( Q(p)\alpha) =0.
\end{equation}
\end{lemma}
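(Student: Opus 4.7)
The plan is to reduce \eqref{eqn_log_vinogradov} to the \Cesaro{} statement \eqref{eqn_rhin} by two routine manoeuvres: a character expansion to remove the arithmetic-progression restriction, and Abel summation combined with Mertens' theorem to pass from \Cesaro{} to logarithmic averaging.

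First, by orthogonality of additive characters,
$$\1_{a\Z+b}(p) \;=\; \frac{1}{a}\sum_{j=0}^{a-1} e(-jb/a)\, e(jp/a),$$
so the left-hand side of \eqref{eqn_log_vinogradov} is a finite linear combination (with bounded coefficients depending only on $a,b$) of the averages
$$\logE_{p\in\P\cap[M]} e\bigl(R_j(p)\bigr), \qquad R_j(x) := Q(x)\alpha + \tfrac{j}{a}x, \quad 0\leq j<a.$$
Each $R_j$ has a non-constant coefficient that is irrational: if $\deg Q\geq 2$, then the leading coefficient of $R_j$ equals the leading coefficient of $Q$ times $\alpha$, irrational because $\alpha\notin\Q$; if $\deg Q = 1$, then the coefficient of $x$ in $R_j$ is $Q'(0)\alpha + j/a$, again irrational for the same reason. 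Thus it suffices to prove that
$$\lim_{M\to\infty}\logE_{p\in\P\cap[M]} e(R(p)) = 0$$
for any polynomial $R$ with at least one non-constant irrational coefficient.

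For this last step, set $S(t) := \sum_{p\leq t} e(R(p))$. The hypothesis on $R$ and \eqref{eqn_rhin} give $S(M) = o(\pi(M)) = o(M/\log M)$ by the prime number theorem. Abel summation yields
$$\sum_{p\leq M} \frac{e(R(p))}{p} \;=\; \frac{S(M)}{M} \,+\, \int_2^M \frac{S(t)}{t^2}\,dt,$$
whose first term is $o(1/\log M) = o(1)$. For the integral, fix $\epsilon > 0$ and choose $T$ so that $|S(t)| \leq \epsilon\, t/\log t$ for $t\geq T$; the contribution from $[2,T]$ is a bounded constant, while
$$\left|\int_T^M \frac{S(t)}{t^2}\,dt\right| \;\leq\; \epsilon\int_T^M \frac{dt}{t\log t} \;=\; \epsilon\bigl(\log\log M - \log\log T\bigr).$$
Dividing by $\sum_{p\leq M} 1/p \sim \log\log M$ (Mertens' second theorem), the resulting ratio has $\limsup_{M\to\infty}$ at most $\epsilon$. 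Letting $\epsilon \downarrow 0$ gives the desired limit.

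No serious obstacle is expected: the entire argument rests on the classical Vinogradov--Rhin estimate \eqref{eqn_rhin} quoted above, the remaining steps being a textbook character decomposition and a standard Abel-summation/Mertens comparison between \Cesaro{} and logarithmic averages. The only minor point requiring attention is verifying that the perturbation by $jx/a$ in the linear case $\deg Q = 1$ does not accidentally rationalize a coefficient, which is handled by the observation above.
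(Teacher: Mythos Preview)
Your proof is correct and follows the same overall plan as the paper: first expand $\1_{a\Z+b}$ via additive characters to reduce to polynomials $R_j$ with an irrational non-constant coefficient, then transfer the \Cesaro{} estimate \eqref{eqn_rhin} to logarithmic averages. The only difference lies in how this second step is carried out. The paper partitions $[2,M]$ into geometric blocks $(\rho^j,\rho^{j+1}]$, applies \eqref{eqn_rhin} on each block together with the prime number theorem, and obtains an $\Oh(\rho-1)$ error that is removed by letting $\rho\to 1$. You instead use Abel summation with $S(t)=\sum_{p\leq t}e(R(p))$ and bound the resulting integral using $S(t)=o(t/\log t)$ and Mertens' theorem. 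Both arguments are standard ways of converting a \Cesaro{} $o(1)$ into a logarithmic $o(1)$; your route is arguably more direct, while the paper's blocking has the minor advantage of only invoking \eqref{eqn_rhin} on short intervals rather than cumulatively. Either way the verification that the character twist $jx/a$ cannot rationalize a non-constant coefficient of $R_j$ is the one genuine check, and you handle it correctly.
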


\begin{proof}
We will make use of the basic identity
\[
\1_{a\Z+b}(n)=\frac{1}{a}\sum_{r=0}^{a-1} e\bigg(\frac{(b-n)r}{a}\bigg).
\]
Define $Q_r(x)=Q(x)\alpha-\frac{xr}{a}$.
We thus have
\begin{align*}
\sum_{p\in\P\cap[M]} \1_{a\Z+b}(p)\,\frac{e(Q(p)\alpha)}{p}
&=\frac{1}{a}\sum_{r=0}^{a-1} e\bigg(\frac{br}{a}\bigg) \Bigg(
\sum_{p\in\P\cap[M]}  \frac{e(Q_r(p))}{p} \Bigg).
\end{align*}
Let $\rho\in (1,2)$ and pick $J\in\N$ such that $M\in (\rho^{J},\rho^{J+1}]$.
By the prime number theorem, $|\P\cap (\rho^j,\rho^{j+1}]|\sim\frac{\rho^j}{j\log(\rho)}$.
Hence
\begin{align*}
\bigg|\sum_{p\in\P\cap[M]} & \frac{e(Q_r(p))}{p}\bigg|
\\
&\leq \sum_{j=1}^{J}\bigg| \sum_{p\in\P\cap (\rho^j,\rho^{j+1}]} \frac{e(Q_r(p))}{p}\bigg|+\Oh(1)
\\
&\leq \sum_{j=1}^{J}\bigg| \frac{1}{\rho^j} \sum_{p\in\P\cap (\rho^j,\rho^{j+1}]} e(Q_r(p))\bigg|+ \Oh\bigg(\sum_{j=1}^{J}\frac{\rho-1}{j\log(\rho)}\bigg)
\\
&= \sum_{j=1}^{J}\frac{1}{j\log(\rho)}\bigg|\frac{j\log(\rho)}{\rho^j} \sum_{p\in\P\cap (\rho^j,\rho^{j+1}]} e(Q_r(p))\bigg|+ \Oh\bigg(\sum_{j=1}^{J}\frac{\rho-1}{j\log(\rho)}\bigg).
\end{align*}
Note that by \eqref{eqn_rhin},
\begin{align*}
\lim_{j\to\infty} \frac{j\log(\rho)}{\rho^j} \sum_{p\in\P\cap (\rho^j,\rho^{j+1}]} e(Q_r(p))=\lim_{j\to\infty}
 \E_{p\in\P\cap (\rho^j,\rho^{j+1}]} e(Q_r(p))=
0,
\end{align*}
because $Q_r(x)$ is a polynomial with at least one irrational non-constant coefficient. Therefore, we are left with
\[
\sum_{j=1}^{J}\frac{1}{j\log(\rho)}\bigg|\frac{j\log(\rho)}{\rho^j} \sum_{p\in\P\cap (\rho^j,\rho^{j+1}]} e(Q_r(p))\bigg|=\oh_{J\to\infty}\bigg(\sum_{j=1}^{J}\frac{1}{j\log(\rho)}\bigg).
\]
Overall, this gives 
\begin{equation}
\label{eqn_log_vinogradov_2}
\bigg|\sum_{p\in\P\cap[M]} \1_{a\Z+b}(p)\,\frac{e(Q(p)\alpha)}{p}\bigg|=\Oh\bigg(\sum_{j=1}^{J}\frac{\rho-1}{j\log(\rho)}\bigg). 
\end{equation}
Using $M\in (\rho^{J},\rho^{J+1}]$ and the prime number theorem, we see that
\begin{equation}
\label{eqn_log_vinogradov_3}
\sum_{p\in\P\cap[M]} \frac{1}{p}\sim \sum_{j=1}^{J}\frac{1}{j\log(\rho)}.
\end{equation}
Combining \eqref{eqn_log_vinogradov_2} and \eqref{eqn_log_vinogradov_3}, we obtain
\[
\limsup_{M\to\infty}\Big|\logE_{p\in\P\cap[M]} \1_{a\Z+b}(p)\, e(Q(p)\alpha)\Big|=\Oh\big(\rho-1\big).
\]
Letting $\rho$ approach $1$ finishes the proof.
\end{proof}

Below, we offer a slight variant of \cref{lem_log_vinogradov}, which will be used in \cref{sec_proof_of_thm_rnd_component}. Recall that $\P_W=\{p\in \P: p\equiv 1\bmod W\}$. 

\begin{corollary}
\label{cor_log_vinogradov}
For any $a,W\in\N$, $b\in\Z$, any non-constant polynomial with rational coefficients $Q$, and any irrational real number $\alpha$ we have
\begin{equation}
\label{eqn_log_vinogradov_4}
\lim_{M\to\infty}\logE_{p\in\P_W\cap[M]} \1_{a\Z+b}(p)\, e( Q(p)\alpha) =0.
\end{equation}
\end{corollary}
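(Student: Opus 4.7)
The plan is to reduce \cref{cor_log_vinogradov} directly to \cref{lem_log_vinogradov} via the Chinese Remainder Theorem. Since requiring $p\in\P_W$ is the same as requiring $p\in W\Z+1$, the numerator of the logarithmic average in \eqref{eqn_log_vinogradov_4} involves the joint indicator $\1_{W\Z+1}(p)\,\1_{a\Z+b}(p)$. Setting $d=\gcd(W,a)$ and $a'=\mathrm{lcm}(W,a)$, I would split into two cases. If $b\not\equiv 1\pmod{d}$, then any $n\in W\Z+1$ satisfies $n\equiv 1\pmod d$ while any $n\in a\Z+b$ satisfies $n\equiv b\pmod d$, so the joint indicator vanishes identically on $\Z$ and the average is trivially $0$. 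Otherwise, CRT produces a unique residue $b'\in\{0,1,\ldots,a'-1\}$ with
\[
\1_{W\Z+1}(n)\,\1_{a\Z+b}(n)=\1_{a'\Z+b'}(n) \qquad \text{for all }n\in\Z.
\]

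Next I would rewrite the logarithmic average in terms of a log-average over all primes:
\[
\logE_{p\in\P_W\cap[M]}\1_{a\Z+b}(p)\,e(Q(p)\alpha)
\,=\,\frac{\sum_{p\in\P\cap[M]} 1/p}{\sum_{p\in\P_W\cap[M]} 1/p}\cdot \logE_{p\in\P\cap[M]}\1_{a'\Z+b'}(p)\,e(Q(p)\alpha).
\]
The prefactor is bounded (and in fact tends to $\phi(W)$) by Mertens' theorem combined with the prime number theorem in arithmetic progressions, which give $\sum_{p\in\P_W\cap[M]}1/p\sim \phi(W)^{-1}\log\log M$ while $\sum_{p\in\P\cap[M]}1/p\sim\log\log M$. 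The second factor tends to $0$ as $M\to\infty$ by \cref{lem_log_vinogradov} applied to the single congruence class $a'\Z+b'$. Combining these two observations finishes the argument.

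No real obstacle arises here: all the analytic content — the Vinogradov--Rhin exponential sum estimate — already lives inside \cref{lem_log_vinogradov}. What remains is just the bookkeeping of collapsing two coupled congruence constraints into one and renormalizing the logarithmic weights, which is why this is stated as a corollary rather than as a separate result.
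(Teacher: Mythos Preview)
Your proof is correct and follows essentially the same route as the paper: renormalize the logarithmic average over $\P_W$ by a bounded factor coming from the prime number theorem in arithmetic progressions, then absorb the extra congruence condition $\1_{W\Z+1}$ into the indicator $\1_{a\Z+b}$ and invoke \cref{lem_log_vinogradov}. You are actually more explicit than the paper about the CRT bookkeeping and correctly identify the prefactor as tending to $\phi(W)$ (the paper writes $W$, which appears to be a slip, but this is immaterial since only boundedness is needed).
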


\begin{proof}
In light of the prime number theorem in arithmetic progressions, we have for any bounded function $u\colon\N\to\C$ that
\[
\limsup_{M\to\infty}\Big|\logE_{p\in\P_W\cap[M]} u(p) -\logE_{p\in\P\cap[M]} W\1_{W\Z+1}(p)\, u(p)\Big|=0.
\]
It is then clear that \eqref{eqn_log_vinogradov_4} follows from \eqref{eqn_log_vinogradov}.
\end{proof}

\subsection{Preliminaries from ergodic theory}

For the reminder of this section, let $\Hilb$ be Hilbert space with inner product $\langle.,.\rangle$ and norm $\|.\|$. A \define{unitary operator} on $\Hilb$ is a bounded linear operator $U \colon \Hilb\to\Hilb$ which is invertible and preserves the inner product of the Hilbert space, that is, for all $f,g\in\Hilb$ we have
\[
\langle Uf,Ug\rangle=\langle f,g\rangle.
\]
We say that $\Hilb$ is the \define{orthogonal direct sum} of $\Hilb_1$ and $\Hilb_2$, and write $\Hilb = \Hilb_1 \oplus \Hilb_2$, if $\Hilb_1$ and $\Hilb_2$ are closed subspaces of $\Hilb$ satisfying:  
\begin{itemize}
    \item $\langle f_1 ,f_2\rangle=0$ whenever $f_1 \in \Hilb_1$ and $f_2 \in \Hilb_2$, and  
    \item for every $f \in \Hilb$, there exist $f_1 \in \Hilb_1$ and $f_2 \in \Hilb_2$ such that $f = f_1 + f_2$.  
\end{itemize}
Note that in this case, $f_1$ and $f_2$ are uniquely determined by $f$. In fact, $f_1$ equals the orthogonal projection of $f$ onto the subspace $\Hilb_1$, whereas $f_2$ is the orthogonal projection of $f$ onto $\Hilb_2$.

Let us now state von Neumann's mean ergodic theorem in an equivalent form.
\begin{theorem}[Mean ergodic theorem]
\label{thm_MET}
Let $U \colon \Hilb\to\Hilb$ be a unitary operator on a Hilbert space $\Hilb$. Then $\Hilb= \Hilb_{\mathrm{inv}}\oplus \Hilb_{\mathrm{erg}}$, where
\[
\Hilb_{\mathrm{inv}}=\{f\in\Hilb: Uf=f\}\qquad\text{and}\qquad
\Hilb_{\mathrm{erg}}=\Big\{f\in\Hilb: \lim_{H\to\infty}\Big\|\E_{h\in[H]}U^hf\Big\| = 0\Big\}.
\]
\end{theorem}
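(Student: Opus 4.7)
The plan is to establish the decomposition $\Hilb=\Hilb_{\mathrm{inv}}\oplus\Hilb_{\mathrm{erg}}$ via a standard invariant/coboundary dichotomy. First I would check that both $\Hilb_{\mathrm{inv}}$ and $\Hilb_{\mathrm{erg}}$ are closed linear subspaces: for $\Hilb_{\mathrm{inv}}$ this is immediate from continuity of $U-I$, while for $\Hilb_{\mathrm{erg}}$ it follows from the uniform operator bound $\|\E_{h\in[H]}U^h\|\leq 1$ (each $U^h$ is an isometry) via a routine $\varepsilon/2$ approximation argument.

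Next I would verify orthogonality. For $f\in\Hilb_{\mathrm{inv}}$ and $g\in\Hilb_{\mathrm{erg}}$, unitarity yields $U^*f=f$, hence $\langle f,U^hg\rangle=\langle(U^*)^hf,g\rangle=\langle f,g\rangle$ for every $h\geq 1$. Averaging over $h\in[H]$ gives
\[
\langle f,g\rangle=\Big\langle f,\E_{h\in[H]}U^hg\Big\rangle,
\]
and letting $H\to\infty$ together with the Cauchy--Schwarz inequality forces $\langle f,g\rangle=0$.

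The heart of the argument is to show $\Hilb=\Hilb_{\mathrm{inv}}+\Hilb_{\mathrm{erg}}$. For this I would introduce the coboundary subspace $\Hilb_0=\{Ug-g:g\in\Hilb\}$. A telescoping computation
\[
\E_{h\in[H]}U^h(Ug-g)=\tfrac{1}{H}\bigl(U^{H+1}g-Ug\bigr)
\]
gives $\|\E_{h\in[H]}U^hf\|\leq 2\|g\|/H\to 0$ for every $f=Ug-g$, so $\Hilb_0\subset\Hilb_{\mathrm{erg}}$, and closedness of $\Hilb_{\mathrm{erg}}$ extends this to $\overline{\Hilb_0}\subset\Hilb_{\mathrm{erg}}$.

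The main step --- and the one that genuinely uses unitarity of $U$ rather than mere boundedness --- is to identify the orthogonal complement of $\Hilb_0$. Here $f\in\Hilb_0^\perp$ if and only if $\langle(U^*-I)f,g\rangle=0$ for all $g\in\Hilb$, i.e.\ $U^*f=f$, and unitarity then upgrades this to $Uf=f$, so $\Hilb_0^\perp=\Hilb_{\mathrm{inv}}$. Combining the standard Hilbert-space decomposition $\Hilb=\overline{\Hilb_0}\oplus(\overline{\Hilb_0})^\perp$ with $(\overline{\Hilb_0})^\perp=\Hilb_0^\perp=\Hilb_{\mathrm{inv}}$ and the inclusion $\overline{\Hilb_0}\subset\Hilb_{\mathrm{erg}}$ yields $\Hilb=\Hilb_{\mathrm{inv}}+\Hilb_{\mathrm{erg}}$, which together with the orthogonality already established completes the proof. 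The only point requiring care is the step $U^*f=f\Leftrightarrow Uf=f$, where unitarity (not just the fact that $U$ is an isometry) is essential; the remainder is bookkeeping with Ces\`aro averages.
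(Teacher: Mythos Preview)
Your argument is correct: this is precisely the classical F.~Riesz proof of the mean ergodic theorem, via the coboundary subspace $\Hilb_0=\{Ug-g:g\in\Hilb\}$ and the identification $\Hilb_0^\perp=\Hilb_{\mathrm{inv}}$ using unitarity. The paper itself does not supply a proof but simply cites a standard reference (Petersen's textbook), and your proposal is exactly the argument one finds there, so there is nothing to compare.
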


A proof of \cref{thm_MET} can be found in any standard book on ergodic theory, for example \cite[Theorem 1.2]{Petersen83}. We will use the following well-known corollary of \cref{thm_MET}.

\begin{corollary}[cf.~{\cite[p.~14]{Bergelson96}}]
\label{cor_rat_toterg_splitting}
Let $U \colon \Hilb\to\Hilb$ be a unitary operator on a Hilbert space $\Hilb$. Then
$\Hilb= \Hilb_{\mathrm{rat}}\oplus \Hilb_{\mathrm{tot\,erg}}$, where
\begin{align*}
\Hilb_{\mathrm{rat}}&=\overline{\big\{f\in\Hilb:~\exists q\in\N,~ U^{q}f=f\big\}},
\\
\Hilb_{\mathrm{tot\,erg}}&=\Big\{f\in\Hilb: \forall q\in \N,~\lim_{H\to\infty}\Big\|\E_{h\in[H]}U^{qh}f\Big\| = 0\Big\}.
\end{align*}
\end{corollary}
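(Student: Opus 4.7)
The plan is to derive the decomposition $\Hilb = \Hilb_{\mathrm{rat}} \oplus \Hilb_{\mathrm{tot\,erg}}$ by applying the mean ergodic theorem (Theorem~\ref{thm_MET}) not to $U$ itself but separately to each power $U^q$ for $q \in \N$. For each such $q$, since $U^q$ is again unitary, Theorem~\ref{thm_MET} gives
\[
\Hilb \;=\; \Hilb_{\mathrm{inv}}^{(q)} \oplus \Hilb_{\mathrm{erg}}^{(q)},
\]
where $\Hilb_{\mathrm{inv}}^{(q)} = \{f : U^q f = f\}$ and $\Hilb_{\mathrm{erg}}^{(q)} = \{f : \lim_{H\to\infty} \|\E_{h\in[H]} U^{qh} f\| = 0\}$. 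From the definitions one reads off that $\Hilb_{\mathrm{tot\,erg}} = \bigcap_{q\in\N} \Hilb_{\mathrm{erg}}^{(q)}$, while the set $\{f : U^q f = f\text{ for some }q\}$ is already a $\C$-linear subspace (if $U^{q_1}f_1=f_1$ and $U^{q_2}f_2=f_2$, then $U^{q_1q_2}$ fixes $\alpha f_1 + \beta f_2$), so $\Hilb_{\mathrm{rat}}$ coincides with the closure of $\bigcup_q \Hilb_{\mathrm{inv}}^{(q)}$.

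First I would verify orthogonality. Pick $f \in \Hilb_{\mathrm{inv}}^{(q)}$ and $g \in \Hilb_{\mathrm{tot\,erg}}$. Since $g \in \Hilb_{\mathrm{erg}}^{(q)}$, the MET decomposition for $U^q$ says $g$ is orthogonal to $\Hilb_{\mathrm{inv}}^{(q)}$, so $\langle f,g\rangle = 0$. By bilinearity and continuity of the inner product, this extends to all $f$ in the closure $\Hilb_{\mathrm{rat}} = \overline{\bigcup_q \Hilb_{\mathrm{inv}}^{(q)}}$, establishing $\Hilb_{\mathrm{rat}} \perp \Hilb_{\mathrm{tot\,erg}}$.

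Next I would verify that the two subspaces span $\Hilb$. Given any $f \in \Hilb$, let $f_1$ be its orthogonal projection onto the closed subspace $\Hilb_{\mathrm{rat}}$ and set $f_2 = f - f_1$. Then $f_2 \perp \Hilb_{\mathrm{rat}}$; in particular $f_2 \perp \Hilb_{\mathrm{inv}}^{(q)}$ for every $q\in\N$. Applying Theorem~\ref{thm_MET} to $U^q$ and using the uniqueness of the orthogonal decomposition, this forces $f_2 \in \Hilb_{\mathrm{erg}}^{(q)}$, i.e., $\lim_{H\to\infty}\|\E_{h\in[H]}U^{qh}f_2\| = 0$. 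Since $q$ was arbitrary, $f_2 \in \Hilb_{\mathrm{tot\,erg}}$, and the decomposition $f = f_1 + f_2$ with $f_1 \in \Hilb_{\mathrm{rat}}$ and $f_2 \in \Hilb_{\mathrm{tot\,erg}}$ is obtained.

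There is no real obstacle here beyond unwinding definitions; the only thing one has to notice is that the set of $U^q$-periodic vectors (ranging over all $q$) is already a subspace, so its closure is the same as its closed linear span. Once that observation is in place, the result is just the mean ergodic theorem applied fiberwise in $q$.
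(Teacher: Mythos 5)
Your proposal is correct and follows essentially the same route as the paper: apply the mean ergodic theorem to each power $U^q$ and then identify $\Hilb_{\mathrm{rat}}$ with the closure of $\bigcup_q \Hilb_{\mathrm{inv}}^{(q)}$ and $\Hilb_{\mathrm{tot\,erg}}$ with $\bigcap_q \Hilb_{\mathrm{erg}}^{(q)}$. You merely spell out the details (the subspace observation, orthogonality, and the projection argument for spanning) that the paper's proof leaves as a one-line "therefore."
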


\begin{proof}
For every $q\in\N$, define
\[
\Hilb_{\mathrm{inv},q}=\{f\in\Hilb: U^qf=f\}\quad\text{and}\quad
\Hilb_{\mathrm{erg},q}=\Big\{f\in\Hilb: \lim_{H\to\infty}\Big\|\E_{h\in[H]}U^{qh}f\Big\| = 0\Big\}.
\]
By \cref{thm_MET}, we have $\Hilb= \Hilb_{\mathrm{inv},q}\oplus \Hilb_{\mathrm{erg},q}$ for all $q\in\N$. Therefore,
\[
\Hilb= \overline{\bigcup_{q\in\N} \Hilb_{\mathrm{inv},q}}\,\oplus\, \bigcap_{q\in\N} \Hilb_{\mathrm{erg},q}.
\]
It is now straightforward to verify that $\Hilb_{\mathrm{rat}}=\overline{\bigcup_{q\in\N} \Hilb_{\mathrm{inv},q}}$ and $\Hilb_{\mathrm{tot\,erg}}=\bigcap_{q\in\N} \Hilb_{\mathrm{erg},q}$, and the proof is complete.
\end{proof}

It is worth noting that, in ergodic theory, functions in $\Hilb_{\mathrm{tot,erg}}$ are often referred to as \define{totally ergodic} functions with respect to the operator~$U$.

\subsection{Preliminaries from Fourier analysis}

Throughout this paper, we identify the one-dimensional torus $\T = \R/\Z$ with the interval $[0,1)$, endowed with addition modulo $1$. This allows us to treat elements of $\T$ as real numbers in $[0,1)$, which simplifies notation. As above, we use the abbreviation $e(x) = e^{2\pi i x}$ for $x \in \R$.

Let $\mu$ be a finite Borel measure on $\T$. The \define{Fourier transform} of $\mu$ on $\T$ is the function $\hat{\mu}\colon \Z\to\C$ defined as 
\[
\hat\mu(m)=\int_{\T} e(mx)\d\mu(x),\qquad\forall m\in\Z.
\]
Since $\mu$ is a finite measure, we have $|\hat{\mu}(m)|\leq \hat{\mu}(0)=\mu(\T)<\infty$ for all $m\in\Z$, and hence $\hat{\mu}$ is a bounded function.
It is natural to ask what type of bounded functions on $\Z$ correspond to the Fourier transform of a finite Borel measure on $\T$. The answer to this question is provided by a classical result in Fourier analysis known as Herglotz's theorem. 

A function $\varphi\colon\Z\to\C$ is called \define{non-negative definite} if for all $M\in\N$ and $\lambda_1,\ldots,\lambda_M\in\C$ one has
\[
\sum_{i,j=1}^M \lambda_i\overline{\lambda_j}\varphi(i-j)\geq 0.
\]
Equivalently, $\varphi$ is \define{non-negative definite} if for any $M\in\N$ the matrix $A\in\C^{M\times M}$ defined by $A_{i,j}=\varphi(i-j)$ for all $1\leq i,j\leq M$ is a non-negative definite matrix.

\begin{theorem}[Herglotz's theorem]
\label{thm_herglotz}
Let $\varphi\colon\Z\to\C$ be bounded. Then $\varphi$ is non-negative definite if and only if there exists a finite Borel measure $\mu$ on $\T=\R/\Z$ such that $\varphi=\hat{\mu}$, or in other words,
\[
\varphi(m)=\int_{\T} e(mx)\d\mu(x),\qquad\forall m\in\Z.
\]
\end{theorem}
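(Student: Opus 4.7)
The plan is to prove the easy direction by direct computation and the non-trivial direction by constructing $\mu$ as a weak-* limit of explicit non-negative measures built from $\varphi$. For the forward implication, if $\varphi=\hat\mu$ then for any $M\in\N$ and $\lambda_1,\ldots,\lambda_M\in\C$,
\[
\sum_{i,j=1}^M \lambda_i\overline{\lambda_j}\varphi(i-j) \;=\; \int_\T \Bigl|\sum_{i=1}^M \lambda_i\, e(ix)\Bigr|^2 \d\mu(x) \;\geq\; 0,
\]
so $\varphi$ is automatically non-negative definite.

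For the converse, the key idea is to build, for each $N\in\N$, the \Fejer{}-type trigonometric polynomial
\[
f_N(x) \;=\; \sum_{|m|<N} \Bigl(1-\tfrac{|m|}{N}\Bigr)\varphi(m)\, e(-mx).
\]
The crucial point is that $f_N(x)\geq 0$ for every $x\in\T$. To verify this, I would apply non-negative definiteness to the vector with entries $\lambda_j=e(-jx)$ for $j=1,\ldots,N$ and group the resulting double sum according to the index $m=i-j$: since there are exactly $N-|m|$ pairs $(i,j)\in[N]^2$ with $i-j=m$, one obtains
\[
0 \;\leq\; \frac{1}{N}\sum_{i,j=1}^N e((j-i)x)\,\varphi(i-j) \;=\; f_N(x).
\]
A short calculation then gives $\int_\T e(mx) f_N(x)\d x = \bigl(1-|m|/N\bigr)\varphi(m)$ for all $|m|<N$; in particular, the measure $\mu_N=f_N\d x$ is a non-negative finite Borel measure on $\T$ with total mass $\mu_N(\T)=\varphi(0)$, where $\varphi(0)\geq 0$ follows from non-negative definiteness applied to $M=1$, $\lambda_1=1$.

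Finally, since $\{\mu_N\}_{N\in\N}$ is uniformly bounded in total variation, I would invoke the Banach-Alaoglu theorem (equivalently, Helly's selection theorem for Borel measures on the compact space $\T$) to extract a subsequence $\mu_{N_k}$ converging in the weak-* topology to some finite non-negative Borel measure $\mu$ on $\T$. Testing this convergence against the continuous function $x\mapsto e(mx)$ yields
\[
\hat\mu(m) \;=\; \lim_{k\to\infty}\hat\mu_{N_k}(m) \;=\; \lim_{k\to\infty}\Bigl(1-\tfrac{|m|}{N_k}\Bigr)\varphi(m) \;=\; \varphi(m)
\]
for every fixed $m\in\Z$, completing the construction. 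The main obstacle in this plan is verifying the non-negativity of $f_N$, which requires the correct choice of test vectors $\lambda_j$ together with careful bookkeeping of the double sum; once this positivity is established, the Fourier coefficient computation and the weak-* compactness extraction are essentially routine.
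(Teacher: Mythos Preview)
Your proof is correct and is in fact the standard argument for Herglotz's theorem via \Fejer{} means and weak-* compactness. Note, however, that the paper does not supply a proof of this statement: it appears in the preliminaries section as a classical result quoted without proof, so there is no ``paper's own proof'' to compare against. Your write-up would serve perfectly well as a self-contained justification; the only minor stylistic point is that the verification of $f_N\geq 0$ and the Fourier-coefficient computation are entirely routine once the choice $\lambda_j=e(-jx)$ is made, so you need not flag them as obstacles.
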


If $\A\subset\ell^\infty(\Z)$ is a translation invariant $C^*$-subalgebra that admits logarithmic averages along an increasing sequence $\bN=(N_s)_{s\in\N}$ (see \cref{def_algebra_properties}), then it is straightforward to check that for any $f\in\A$ the function
\[
m\mapsto \logE_{n\in\bN} f(n)\overline{f(n+m)}
\]
is non-negative definite. By Herglotz's theorem, there exists a finite Borel measure $\mu_f$ on $\T$ such that
\begin{equation}
\label{eqn_spectral_measure}
\logE_{n\in\bN} f(n)\overline{f(n+m)}=\hat\mu_f(m)=\int_{\T} e(mx)\d\mu_f(x).
\end{equation}
We call $\mu_f$ the \define{spectral measure of $f$}.

\begin{lemma}
\label{lem_spectral_measure_isometry_property}
Let $N_1<N_2<\ldots\in\N$, and let $\A\subset\ell^\infty(\Z)$ be an affinely invariant $C^*$-subalgebra that admits logarithmic averages along $\bN=([N_s])_{s\in\N}$. Then for any $\ell\in\N$, $m_1,\ldots,m_\ell\in\Z$, $c_1,\ldots, c_\ell\in\C$, and $f\in\A$ we have
\[
\logE_{n\in\bN} \bigg| \sum_{i=1}^\ell c_i f(n+m_i)\bigg|^2
=
\int_\T\bigg| \sum_{i=1}^\ell c_i e(m_ix)\bigg|^2\d\mu_f(x),
\]
where $\mu_f$ denotes the spectral measure of $f$.
\end{lemma}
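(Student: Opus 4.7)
The strategy is to expand the square, reduce to two-point correlations, and then invoke the definition~\eqref{eqn_spectral_measure} of the spectral measure.

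First I would establish a shift-invariance property for logarithmic averages: for every bounded $g\colon\Z\to\C$ and every fixed $m\in\Z$,
\[
\lim_{N\to\infty}\Big(\logE_{n\in[N]}g(n+m)-\logE_{n\in[N]}g(n)\Big)=0.
\]
This is the additive analogue of \cref{lem_dilation_log_averages} and follows by essentially the same reindexing argument: writing $\sum_{n\in[N]}g(n+m)/n=\sum_{n\in[N+m]\setminus[m]}g(n)/(n-m)$ and comparing with $\sum_{n\in[N]}g(n)/n$ shows that the discrepancy in the numerators is bounded by $\Oh_m(\|g\|_\infty)$, and after dividing by the harmonic sum $\sum_{n\in[N]}1/n\sim\log N$ this error vanishes in the limit.

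With shift-invariance in hand, the heart of the proof is a direct computation. Expanding the square and interchanging the finite $(i,j)$-sum with the logarithmic average yields
\[
\logE_{n\in\bN}\Big|\sum_{i=1}^\ell c_if(n+m_i)\Big|^2
=\sum_{i,j=1}^\ell c_i\overline{c_j}\,\logE_{n\in\bN}f(n+m_i)\overline{f(n+m_j)}.
\]
Applying shift-invariance to the inner average reduces each correlation to $\logE_{n\in\bN}f(n)\overline{f(n+m_j-m_i)}$, which by~\eqref{eqn_spectral_measure} equals $\int_\T e((m_j-m_i)x)\d\mu_f(x)$. Pulling the finite sum back inside the integral and recognizing the resulting integrand $\sum_{i,j}c_i\overline{c_j}e((m_j-m_i)x)$ as the squared modulus of a trigonometric polynomial in $x$ with coefficients determined by the $c_i$ and $m_i$ then yields the claimed identity.

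The only step requiring genuine computation is the shift-invariance of logarithmic averages, but this is an elementary estimate of the same flavor as \cref{lem_dilation_log_averages}; beyond that, the entire argument is a formal algebraic manipulation. I therefore do not anticipate any significant obstacle.
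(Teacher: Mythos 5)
Your proof is correct and follows essentially the same route as the paper: expand the square, use shift-invariance of logarithmic averages to reduce each cross-term to a correlation of the form $\logE_{n\in\bN}f(n)\overline{f(n+m_j-m_i)}$, apply the definition \eqref{eqn_spectral_measure} of the spectral measure, and recollapse the double sum into the squared trigonometric polynomial. The only difference is that you spell out the shift-invariance estimate, which the paper takes as understood; your justification of it is sound.
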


\begin{proof}
By expanding the square and rearranging, we can rewrite the left hand side as
\begin{align*}
\logE_{n\in\bN} \bigg| \sum_{i=1}^\ell c_i f(n+m_i)\bigg|^2
&=\sum_{i,j=1}^\ell c_i\overline{c_j}\, \logE_{n\in\bN}
f(n+m_i)\overline{f(n+m_j)}
\\
&=\sum_{i,j=1}^\ell c_i\overline{c_j}\, \logE_{n\in\bN} f(n)\overline{f(n+m_j-m_i)}.
\end{align*}
Let $\mu_f$ denote the spectral measure of $f$.
If we now use \eqref{eqn_spectral_measure}, reorder, and collapse the resulting expression back into a square, we obtain
\begin{align*}
\sum_{i,j=1}^\ell c_i\overline{c_j}\, \logE_{n\in\bN} f(n)\overline{f(n+m_j-m_i)}
&=\sum_{i,j=1}^\ell c_i\overline{c_j} \int_\T e((m_j-m_i)x)\d\mu_f(x)
\\
&=\int_\T\bigg| \sum_{i=1}^\ell c_i e(m_ix)\bigg|^2\d\mu_f(x).
\end{align*}
This completes the proof.
\end{proof}

\section{The structure theorem}

Many proofs in combinatorics depend on some type of structure theorem that allows one to decompose a given objet (such as a set, function, or graph) into two components, a ``structured'' component that is responsible for the main behavior and can be analyzed using combinatorial methods, and a ``random'' component that represents noise and can be dealt with using probabilistic techniques; see \cite{Tao07b} for a survey.
The proofs of our main theorems also follow this strategy, and the purpose of this section is to introduce the structure theorem on which our arguments rely.

\subsection{Statement of the structure theorem}

\begin{definition}
Let $\bN=(N_s)_{s\in\N}$ with $N_1<N_2<\ldots\in\N$, and let $\A\subset\ell^\infty(\Z)$ be a translation invariant $C^*$-subalgebra that admits logarithmic averages along $\bN$.
\begin{itemize}
\item 
We say $f\in\A$ is \define{locally totally ergodic (with respect to logarithmic averages along~$\bN$)} if for all $q\in\Z\setminus\{0\}$ we have
\[
\lim_{H\to\infty}\logE_{n\in \bN}\Big|\E^{\phantom{\log{}}}_{h\in [H]}\, f(n+qh)\Big|^2=0.
\]   
\item 
We say $f\in\A$ is \define{locally rationally almost periodic (with respect to logarithmic averages along~$\bN$)} if for all $\epsilon>0$ there exists some $q\in\N$ such that 
\[
\sup_{m\in\N}\Big(\logE_{n\in\bN}\big|f(n+qm)-f(n)\big|^2\Big)\leq \epsilon.
\]
\end{itemize}
\end{definition}

Note that the collection of all locally totally ergodic elements of $\A$ forms a closed subspace of $\A$ that is invariant under complex conjugation, whereas the collection of all locally rationally almost periodic elements of $\A$ forms a $C^*$-subalgebra of $\A$. Additional convenient properties of locally rationally almost periodic and locally totally ergodic functions are collected in \cref{lem_basic_properties_locally_periodic_locally_aperiodic} at the end of this subsection, and in \cref{sec_spectral_char_locally_periodic_locally_aperiodic}.

\begin{theorem}[Structure Theorem]
\label{thm_str}
Let $\bN=(N_s)_{s\in\N}$ with $N_1<N_2<\ldots\in\N$, and let $\A\subset\ell^\infty(\Z)$ be a separable and translation invariant $C^*$-subalgebra that admits logarithmic averages along $\bN$.
Then there exists a separable and translation invariant $C^*$-subalgebra $\A'\subset\ell^\infty(\Z)$ that admits logarithmic averages along $\bN$, contains $\A$ as a subset, and such that the following holds:
For all $f\in\A'$ there exist $f_{\str},f_{\rnd}\in \A'$ with
\[
f=f_{\str}+f_{\rnd},
\]
where $f_{\str}$ is locally rationally almost periodic with respect to logarithmic averages along $\bN$ and $f_{\rnd}$ is locally totally ergodic with respect to logarithmic averages along $\bN$. Moreover, if $f$ takes values in a closed interval $[a,b]\subset\R$ then the same holds for $f_{\str}$.
\end{theorem}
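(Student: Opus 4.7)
The plan is to realize concretely the spectral decomposition $\Hilb=\Hilb_{\mathrm{rat}}\oplus\Hilb_{\mathrm{tot\,erg}}$ from \cref{cor_rat_toterg_splitting}, using conditional expectations onto $q$-periodic functions as explicit approximants to the orthogonal projections, and then to enlarge $\A$ so that the limits of these approximants produce honest bounded-function decompositions inside a single separable $C^*$-algebra.

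\textbf{Spectral set-up.} First I would form the GNS Hilbert space $\Hilb$ of $\A$ with inner product $\langle f,g\rangle=\logE_{n\in\bN}f(n)\overline{g(n)}$. Translation invariance of the logarithmic average in the limit makes $\tau^1$ descend to a unitary $U$ on $\Hilb$, so \cref{cor_rat_toterg_splitting} applies. Combining \cref{thm_herglotz} with the isometry from \cref{lem_spectral_measure_isometry_property}, one identifies locally quasiperiodic functions with vectors whose spectral measure $\mu_f$ is supported on $\Q/\Z$ (equivalently, $[f]\in\Hilb_{\mathrm{rat}}$), via
\[
\logE_{n\in\bN}|f(n+qm)-f(n)|^2 \;=\; \int_\T|e(qmx)-1|^2\,d\mu_f(x) \;\leq\; 4\mu_f\big(\T\setminus\tfrac{1}{q}\Z\big),
\]
and locally aperiodic functions with vectors for which $\mu_f(\Q/\Z)=0$ (equivalently, $[f]\in\Hilb_{\mathrm{tot\,erg}}$), via
\[
\logE_{n\in\bN}\Big|\E_{h\in[H]}f(n+qh)\Big|^2 \;=\; \int_\T\Big|\E_{h\in[H]}e(qhx)\Big|^2\,d\mu_f(x) \;\xrightarrow{H\to\infty}\; \mu_f\big(\tfrac{1}{q}\Z\big),
\]
by dominated convergence.

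\textbf{Bounded projections and enlargement.} For each $q\in\N$ I would introduce the $q$-periodic conditional expectation
\[
P_q f(n)=\sum_{r=0}^{q-1}e(-rn/q)\,\logE_{m\in\bN}e(rm/q)f(m),
\]
which, after expanding $\1_{m\equiv n\bmod q}=\tfrac{1}{q}\sum_r e(r(m-n)/q)$, equals the log average of $f$ over the residue class $n\bmod q$; hence $P_q f$ is $q$-periodic, bounded by $\|f\|_\infty$, takes values in $[a,b]$ whenever $f$ does, and descends in $\Hilb$ (via \cref{thm_MET} applied to $U^q$) to the orthogonal projection onto $\{v:U^q v=v\}$. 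Taking a divisible sequence $q_1\mid q_2\mid\cdots$, the vectors $[P_{q_j}f]$ converge in $\Hilb$ to the projection of $[f]$ onto $\Hilb_{\mathrm{rat}}$. Starting from $\A$, I would iterate countably many times the following enlargement: (a) adjoin all $P_q f$ (for $q\in\N$ and $f$ in a countable dense subset), along with translates, conjugates, and polynomial combinations; (b) refine $\bN$ by a standard diagonal extraction so the newly introduced functions have log averages along it; and (c) via Tychonoff compactness in $[-\|f\|_\infty,\|f\|_\infty]^\Z$, extract for each $f$ in the dense subset a bounded pointwise-limit $f_{\str}$ of a subsequence of $P_{q_j}f$, further refining $\bN$ so that $\logE_{n\in\bN}|P_{q_j}f-f_{\str}|^2\to 0$ and hence $[f_{\str}]$ represents the Hilbert-space projection of $[f]$ onto $\Hilb_{\mathrm{rat}}$. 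Setting $f_{\rnd}=f-f_{\str}$, iterating, and closing yields the desired separable, translation invariant $C^*$-subalgebra $\A'\supset\A$ admitting log averages along $\bN$; the spectral identifications above then give local quasiperiodicity of $f_{\str}$, local aperiodicity of $f_{\rnd}$, and $f_{\str}\in[a,b]^\Z$ whenever $f$ is.

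\textbf{Main obstacle.} The principal difficulty is to reconcile three countable diagonal extractions — in $q$ (to get convergence to the rational-spectral projection in $\Hilb$), in $n$ (to obtain a pointwise bounded limit $f_{\str}$), and in the sequence $\bN$ (to keep log averages defined) — while preserving separability, translation invariance, $C^*$-algebra closure, and closure under the decomposition for \emph{every} element of $\A'$. In particular, logarithmic averages do not obey a general bounded convergence theorem (for instance, $\1_{n\leq j}\to 1$ pointwise but $\logE\1_{n\leq j}\to 0$), so a pointwise limit of $P_{q_j}f$ need not automatically represent the Hilbert-space projection; the diagonalization must be organized so that agreement holds by construction, which is the technical heart of the proof.
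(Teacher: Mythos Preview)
Your approach has a genuine gap: the theorem requires $\A'$ to admit logarithmic averages along the \emph{given} sequence $\bN$, not along some refinement of it. Once you start passing to subsequences of $\bN$ in step~(b) (and again in (c), and again at each of countably many iterations), you are proving a different statement. This is not cosmetic: the applications in \cref{sec_outline_of_proof} need $\A'$ to inherit the iterated affine correlations along $\bPW$ that $\A$ already has, and your diagonal extractions would force refining those averaging schemes as well, unraveling the setup. Relatedly, your $P_q f$ need not be well-defined without refinement, since $\A$ is only assumed translation invariant and there is no reason the limits $\logE_{m\in\bN}e(rm/q)f(m)$ exist.

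The paper sidesteps every difficulty you flag as the ``main obstacle'' by a single move: it \emph{defines} $\A'=\mathscr{L}^2(\A,\bN)\cap\ell^\infty(\Z)$, where $\mathscr{L}^2(\A,\bN)$ is the space of functions approximable in the $\bN$-seminorm by elements of $\A$. The key input is that the quotient $\mathcal{L}^2(\A,\bN)$ is a \emph{complete} Hilbert space (\cref{thm_farhangi}); completeness is what lets \cref{cor_rat_toterg_splitting} produce an honest element of the space rather than an ideal point, with no diagonalization and no change to $\bN$. For the range constraint, the paper does not build $f_{\str}$ constructively at all: it takes any representative of the orthogonal projection of $[f]$ onto $\Hilb_{\mathrm{rat}}$, truncates it to $[a,b]$, observes that truncation keeps it in $\Hilb_{\mathrm{rat}}$ and does not increase the distance to $f$, and invokes uniqueness of the closest point to conclude the truncated version lies in the same equivalence class. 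This two-line argument replaces your entire Tychonoff and pointwise-limit machinery.
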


\cref{thm_str} can be viewed as an integer analogue of \cref{cor_rat_toterg_splitting}. Similar structure theorems in the integers were obtained in \cite{MRR19}.
The proof of \cref{thm_str} is given in \cref{sec_proof_str_thm}.

\begin{remark}
It will be clear from the construction used in the proof of \cref{thm_str} that
if $\A$ is affinely invariant then the $C^*$-supalgebra $\A'$ guaranteed by \cref{thm_str} is also affinely invariant.
Similarly, if $\bPW=(\P_W\cap[M_{t_k}])_{t\in\N}$, where $M_1<M_2<\ldots\in\N$ and $W\in\N$, and $\A$ admits iterated affine correlations along $\bPW$ (see~\cref{def_algebra_properties}), then so does $\A'$.
\end{remark}

\begin{lemma}[Basic properties of locally rationally almost periodic and locally totally ergodic functions]
\label{lem_basic_properties_locally_periodic_locally_aperiodic}
Let $\bN=(N_s)_{s\in\N}$ with $N_1<N_2<\ldots\in\N$, and let $\A\subset\ell^\infty(\Z)$ be a translation invariant $C^*$-subalgebra that admits logarithmic averages along $\bN$.
Suppose $f\in\A$ is locally rationally almost periodic and $g\in\A$ is locally totally ergodic. Then:
\begin{enumerate}
[label=(\roman{enumi}),ref=(\roman{enumi}),leftmargin=*]
\item
\label{itm_basic_properties_locally_periodic_locally_aperiodic_i}
$f$ and $g$ are orthogonal, i.e., $\E^{\log}_{n\in\bN} f(n)\overline{g(n)}=0$;
\item
\label{itm_basic_properties_locally_periodic_locally_aperiodic_ii}
For all $b\in\Z$, $f\circ\tau^b$ is locally rationally almost periodic and $g\circ\tau^b$ is locally totally ergodic;
\item
\label{itm_basic_properties_locally_periodic_locally_aperiodic_iii}
If $\A$ is affinely invariant then for all $a\in\N$, $f\circ\sigma_a$ is locally rationally almost periodic and $g\circ\sigma_a$ is locally totally ergodic.
\end{enumerate}
\end{lemma}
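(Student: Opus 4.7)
My plan is to handle the three items in order. Items~(ii) and~(iii) will be reduced to elementary shift- and dilation-invariance properties of logarithmic averages (the former direct, the latter extracted from \cref{lem_dilation_log_averages}), while (i) is an $L^2$-orthogonality argument of van der Corput type. Throughout I will use the standing observation that logarithmic averages along $\bN$ are invariant under fixed translations: a direct harmonic-sum comparison gives $\logE_{n\in[N]} h(n+b) = \logE_{n\in[N]} h(n) + \Oh(b\|h\|_\infty/\log N)$ for any $b\in\Z$ and bounded $h\colon\Z\to\C$.

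For (i), given $\epsilon>0$, pick $q\in\N$ witnessing the quasiperiodicity of $f$. Using translation invariance, for each fixed $h\in\N$ one has $\logE_{n\in\bN} f(n)\overline{g(n)} = \logE_{n\in\bN} f(n+qh)\overline{g(n+qh)}$; averaging over $h\in[H]$ and decomposing $f(n+qh) = f(n) + (f(n+qh)-f(n))$ splits the left-hand side into two pieces. Cauchy-Schwarz bounds the first piece by $\|f\|_\infty\cdot(\logE_{n\in\bN}|\E_{h\in[H]}g(n+qh)|^2)^{1/2}$, which tends to $0$ as $H\to\infty$ by aperiodicity of $g$ applied at $q$. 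A second application of Cauchy-Schwarz together with quasiperiodicity bounds the second piece by $\sqrt{\epsilon}\|g\|_\infty$ uniformly in $H$. Letting $H\to\infty$ then $\epsilon\to 0$ yields orthogonality. Item~(ii) is then immediate from translation invariance, since $\logE_{n\in\bN}|f\circ\tau^b(n+qm)-f\circ\tau^b(n)|^2 = \logE_{n\in\bN}|f(n+qm)-f(n)|^2$, and the analogous identity for the aperiodicity expression handles $g\circ\tau^b$.

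For (iii), the key new input is a one-sided \emph{dilation contraction}: for any non-negative $F\in\A$,
\[
\logE_{n\in\bN} F(an) \leq a\,\logE_{n\in\bN} F(n).
\]
To prove this, the substitution $m=an$ inside the defining sum of $\logE_{n\in[N]}F(an)$ yields $\logE_{n\in[N]}F(an) = (a+\oh(1))\logE_{m\in[aN]}\1_{a\mid m}F(m)$; dropping the indicator (permitted by $F\geq 0$) and observing that $\log(aN)-\log N = \Oh(1)$ forces $\logE_{m\in[aN_s]}F(m)\to\logE_{n\in\bN}F(n)$ as $s\to\infty$. Applying this contraction to $F(n) = |f(n+q(am))-f(n)|^2$ (non-negative, and in $\A$ by translation invariance and the $C^*$-structure) immediately transfers quasiperiodicity of $f$ to $f\circ\sigma_a$, with $q$ unchanged and $\epsilon$ replaced by $a\epsilon$. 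Applied instead with $F(n) = |\E_{h\in[H]}g(n+aqh)|^2$ and combined with aperiodicity of $g$ at the nonzero shift $aq$, it transfers aperiodicity to $g\circ\sigma_a$.

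The main obstacle I anticipate is establishing the contraction in~(iii): it relies crucially on $F\geq 0$ (to discard the indicator $\1_{a\mid m}$) and on the multiplier $a$ being independent of the parameter~$m$, so that the supremum in the definition of local quasiperiodicity survives the transfer. With this contraction in hand, everything else is mechanical.
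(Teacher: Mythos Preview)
Your proof is correct. The paper does not actually prove this lemma---it states only that ``each of the parts (i)--(iii) is straightforward to verify using the definition of locally quasiperiodic and locally aperiodic functions, and we therefore omit their proofs''---so there is nothing to compare against; your argument supplies exactly the details one would expect, including the natural van der Corput/Cauchy--Schwarz orthogonality for~(i) and the dilation contraction $\logE_{n\in\bN}F(an)\leq a\,\logE_{n\in\bN}F(n)$ (essentially the one-sided half of \cref{lem_dilation_log_averages}) for~(iii).
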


Each of the parts \ref{itm_basic_properties_locally_periodic_locally_aperiodic_i}--\ref{itm_basic_properties_locally_periodic_locally_aperiodic_iii} is straightforward to verify using the definition of locally rationally almost periodic and locally totally ergodic functions, and we therefore omit their proofs.

\subsection{Outlining the proofs of Theorems~\ref{thm_1_tech} and~\ref{thm_2_tech}}
\label{sec_outline_of_proof}

We now explain how the structure theorem provides a useful framework for proving both Theorems~\ref{thm_1_tech} and~\ref{thm_2_tech}.
To albeit with the hypothesis in Theorems~\ref{thm_1_tech} and~\ref{thm_2_tech}, let us assume for the reminder of this section that $N_1<N_2<\ldots\in\N$ and $M_1<M_2<\ldots\in\N$ are two increasing integer sequences, and $\A\subset\ell^\infty(\Z)$ is a separable, affinely invariant $C^*$-subalgebra that admits logarithmic averages along $\bN=([N_s])_{s\in\N}$ and iterated affine correlations along $\bPW=(\P_W\cap [M_t])_{t\in\N}$ for all $W\in\N$.

By replacing $\A$ with the $C^*$-subalgebra $\A'\subset\ell^\infty(\Z)$ guaranteed by \cref{thm_str}, we can assume without loss of generality that $\1_A\in\A$ splits into two components,
\begin{equation}
\label{eqn_splitting_of_A}
\1_A=f_{\str}+f_{\rnd},
\end{equation}
where $f_{\str}\in\A$ is locally rationally almost periodic and $f_{\rnd}\in\A$ is locally totally ergodic. Also, since $\1_A$ is non-negative, it follows that the function $f_{\str}$ is non-negative too, which will be important for technical reasons later on.

The next theorem lets us conclude that the random component $f_{\rnd}$ in the decomposition $\1_A=f_{\str}+f_{\rnd}$ does not contribute to the averages in \eqref{eqn_1_tech} and \eqref{eqn_2_tech}. This will allow us to ``ignore'' $f_{\rnd}$ and focus solely on $f_{\str}$ in the proofs of Theorems~\ref{thm_1_tech} and~\ref{thm_2_tech}.

\begin{theorem}[Handling the ``random'' component]
\label{thm_rnd_component}
If $f\in\A$ is locally totally ergodic with respect to logarithmic averages along $\bN$, then for all $k,W,a\in\N$ with $k\geq 2$, all $g\in\A$, and all non-constant polynomials $Q$ with integer coefficients we have
\[
\logE_{p\in\bPW^{*k}}\logE_{n\in\bN} f\big(an+Q(p)\big)g(pn)=0.
\]
\end{theorem}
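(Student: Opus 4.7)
My plan is to reduce the statement to an $L^2$-estimate for a prime-averaged shift of $f$, which then vanishes by combining the spectral identity from \cref{lem_spectral_measure_isometry_property} with the Vinogradov-type cancellation over primes from \cref{cor_log_vinogradov}.

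First, I would reduce to the case $a=1$. By partitioning $\bPW$ into residue classes modulo $a$ (equivalently, by replacing $W$ with $\mathrm{lcm}(W,a)$ and summing over cosets), I may assume every prime under consideration satisfies $p\equiv 1\pmod{a}$. Then $Q(p)=aR(p)+Q(1)$ for an integer polynomial $R$ of the same degree as $Q$, and $f(an+Q(p))=\tilde f(n+R(p))$, where $\tilde f(m):=f(am+Q(1))\in\A$ is locally aperiodic by \cref{lem_basic_properties_locally_periodic_locally_aperiodic}. This reduces the problem to showing
\[
\logE_{p\in\bPW^{*k}}\logE_n \tilde f(n+R(p))\,g(pn)=0
\]
for the locally aperiodic $\tilde f$ and the non-constant integer polynomial $R$.

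Second, I would apply \cref{lem_dilation_log_averages} to rewrite the innermost log-average:
\[
\logE_n \tilde f(n+R(p))\,g(pn)
=\logE_n p\1_{p\mid n}\,\tilde f\big(n/p+R(p)\big)\,g(n)+o(1).
\]
Averaging over $p$ and invoking Cauchy-Schwarz in the $n$-variable against the bounded $g$:
\[
|\Lambda|\leq \|g\|_\infty\Big(\logE_n |V(n)|^2\Big)^{1/2},
\qquad V(n):=\logE_{p\in\bPW^{*k}} p\1_{p\mid n}\,\tilde f\big(n/p+R(p)\big).
\]
The task thus reduces to proving $\logE_n |V(n)|^2 \to 0$ as the $M_i\to\infty$.

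Third, I would expand $|V(n)|^2$ as a double average over $p,p'\in\bPW^{*k}$, so that the joint divisibility $\1_{p\mid n}\1_{p'\mid n}=\1_{\mathrm{lcm}(p,p')\mid n}$ appears. After a change of variables $n\mapsto\mathrm{lcm}(p,p')\cdot m$ (justified via \cref{lem_dilation_log_averages}), the inner $\logE_n$ becomes a correlation of the form $\logE_m \tilde f(um+R(p))\overline{\tilde f(u'm+R(p'))}$, where $u,u'\in\N$ depend on $p,p'$. A further application of \cref{lem_dilation_log_averages} and the spectral identity \cref{lem_spectral_measure_isometry_property} expresses each such correlation as an integral $\int_\T e((R(p)-R(p'))x)\,d\mu_{\tilde f}(x)$ (after absorbing the $u,u'$ dependence into the $n$-change of variable). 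Interchanging the order of integration, the problem reduces to showing that for $\mu_{\tilde f}$-a.e.\ $x\in\T$,
\[
\logE_{p,p'\in\bPW^{*k}} e\big((R(p)-R(p'))x\big)\to 0.
\]
For $x$ irrational, this follows from \cref{cor_log_vinogradov} applied iteratively (since $R(p)-R(p')$ is a non-constant polynomial in the prime variables with rational leading coefficient, and multiplication by irrational $x$ yields an irrational coefficient). Local aperiodicity of $\tilde f$ ensures $\mu_{\tilde f}$ assigns zero mass to rational points of $\T$, so the rational $x$ contribute nothing. Dominated convergence then yields $\logE_n |V(n)|^2\to 0$ and therefore $\Lambda=0$.

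The main obstacle I anticipate is in this last step: matching the correlation $\logE_m \tilde f(um+R(p))\overline{\tilde f(u'm+R(p'))}$ to a spectral integral involving only the difference $R(p)-R(p')$ requires careful bookkeeping of the dilation factors $u=\mathrm{lcm}(p,p')/p$ and $u'=\mathrm{lcm}(p,p')/p'$, which in the generic case $\gcd(p,p')=1$ become $u=p'$ and $u'=p$. The hypothesis $k\geq 2$ is essential because the doubled prime variables coming from Cauchy-Schwarz (namely $(p,p')\in (\bPW^{*k})^2$) supply the necessary structure for the Vinogradov cancellation, and also because $k\geq 2$ together with \cref{prop_log_TK} ensures enough concentration of the weight $p\1_{p\mid n}$ to rule out degenerate cases where $V(n)$ could concentrate on an unhelpful subset of~$n$.
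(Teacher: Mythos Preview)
Your overall strategy (Cauchy--Schwarz, spectral isometry, Vinogradov, irrational support of $\mu_f$) matches the paper's, but there is a genuine gap at the step you yourself flag as the ``main obstacle'', and it is not a bookkeeping issue. After your Cauchy--Schwarz you are left with correlations
\[
\logE_{m\in\bN}\ \tilde f(p'm+R(p))\,\overline{\tilde f(pm+R(p'))},\qquad p\neq p',
\]
in which the two arguments run at \emph{different} linear speeds $p'$ and $p$ in $m$. \cref{lem_spectral_measure_isometry_property} (and the spectral measure $\mu_{\tilde f}$ more generally) only controls auto-correlations of the form $\logE_m \tilde f(m)\overline{\tilde f(m+h)}$, i.e.\ equal speeds. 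There is no way to ``absorb the $u,u'$ dependence into the $n$-change of variable'' and land on $\int_\T e((R(p)-R(p'))x)\,d\mu_{\tilde f}(x)$: that integral simply is not what the correlation equals. Since $R(p)$ need not be divisible by $p'$ (nor $R(p')$ by $p$), you cannot pull the shifts inside the dilations either.

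The paper avoids this by \emph{not} applying Cauchy--Schwarz in the full product $p=p_k\cdots p_1$. Instead it reduces to $k=2$ and applies its multiplicative van der Corput inequality (\cref{thm_mvc}) only in the \emph{outer} prime $p_2$, keeping $p_1$ as a free averaging variable. This yields correlations $\logE_n f(ap_3n+Q(p_2p_1))\overline{f(ap_2n+Q(p_3p_1))}$ with $p_1$ still averaged. One then splits $p_1$ into residue classes modulo $q=ap_2p_3$; on each class, $Q(p_2p_1)-Q(p_2r)$ is divisible by $ap_3$ and $Q(p_3p_1)-Q(p_3r)$ by $ap_2$, so the dilations can be absorbed by defining $f_{r,1}(n)=f(ap_3n+Q(p_2r))$ and $f_{r,2}(n)=f(ap_2n+Q(p_3r))$. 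Now both factors have the \emph{same} speed in $n$, a second Cauchy--Schwarz eliminates $f_{r,1}$, and \cref{lem_spectral_measure_isometry_property} applies to $\mu_{f_{r,2}}$. Vinogradov (\cref{cor_log_vinogradov}) is then invoked in the $p_1$-variable, where the relevant polynomial $Q_{3,r}(p_1)-Q_{2,r}(p_1)$ is non-constant precisely because $p_2\neq p_3$. The reduction to $a=1$ is neither needed nor performed.
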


A proof of \cref{thm_rnd_component} is given in \cref{sec_proof_of_thm_rnd_component}.

The next theorem can be viewed as a (somewhat technical) generalization of a
theorem of Ahlswede, Khachatrian, and \Sarkozy{} \cite{AKS99}, which in turn was a generalization of the aforementioned result of Davenport and \Erdos{} \cite{DE36,DE51}.

\begin{theorem}[Handling the ``structured'' component]
\label{thm_str_component}
Suppose $a_1,a_2,\ldots\in\N$ and $f\in\A$ with $f(n)\geq 0$ for all $n\in\N$.
Then for any $\epsilon>0$ and any $W\in\N$ there are $k\geq 2$ and an infinite set $I\subset\N$, such that for all $i,j\in I$ with $i<j$ we have
\begin{equation*}
\logE_{p\in\bPW^{*k}} \logE_{n\in \bN} f(a_in)f(a_jpn) \geq  \Big(\limsup_{j\to\infty}\logE_{n\in\bN} f(a_jn)\Big)^2-\epsilon.    
\end{equation*}
\end{theorem}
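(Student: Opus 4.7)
The plan is to recast the inequality in Hilbert-space ergodic terms: construct a Markov contraction $T$ on a Hilbert space $\Hilb_\A$ such that
$C_{i,j}^{(k)} := \logE_{p\in\bPW^{*k}}\logE_{n\in\bN}f(a_in)f(a_jpn) = \langle f_i, T^k f_j\rangle$
where $f_j := f\circ\sigma_{a_j}$, then apply the mean ergodic theorem together with weak compactness and a Ramsey-style diagonal extraction.

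After passing to a subsequence, assume $\alpha_j := \logE_{n\in\bN} f(a_j n)\to\delta$. Let $\Hilb_\A$ denote the Hilbert space obtained by completing $\A$ modulo null vectors under the semi-inner product $\langle g,h\rangle := \logE_{n\in\bN} g(n)\overline{h(n)}$; separability of $\A$ passes to $\Hilb_\A$. For $g,h\in\A$ set $B(g,h) := \logE_{p\in\bPW}\logE_{n\in\bN} g(n)h(pn)$; this limit exists by the iterated-affine-correlation hypothesis. \cref{lem_dilation_log_averages} applied to $|h|^2$ shows that each dilation $\sigma_p$ is asymptotically isometric on $\Hilb_\A$, so Cauchy-Schwarz gives $|B(g,h)|\leq\|g\|\|h\|$, and $B$ determines a contraction $T$ on $\Hilb_\A$ via $B(g,h)=\langle g,Th\rangle$. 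One checks $T\1=\1$ directly, and \cref{cor_log_TK} (resting on the logarithmically averaged \Turan{}-Kubilius inequality \cref{prop_log_TK} together with $\sum_{p\in\P_W}1/p=\infty$) yields $T^*\1=\1$. Since each $\sigma_p$ commutes with the averaging $\logE_{p'\in\bPW}\sigma_{p'}$, the operator $T$ commutes with all dilations, and an induction shows that $T^k$ realizes the bilinear form $B_k(g,h) := \logE_{p\in\bPW^{*k}}\logE_n g(n)h(pn)$; in particular $C_{i,j}^{(k)} = \langle f_i,T^k f_j\rangle$.

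Using the uniform bound $\|f_j\|\leq\|f\|_\infty$ and separability, pass via Banach-Alaoglu to a further subsequence so that $f_j\to f_\infty$ weakly in $\Hilb_\A$; note $\langle f_\infty,\1\rangle = \delta$. For any Hilbert-space contraction, the estimate $\|Sf-f\|^2\leq 2\|f\|^2-2\Re\langle f,S^*f\rangle$ forces $\ker(I-S)=\ker(I-S^*)$; applied to $S=T$, this yields an orthogonal projection $P$ onto the common invariant subspace, to which the mean ergodic theorem (the contraction version of \cref{thm_MET}) applies: $\frac{1}{K}\sum_{k=0}^{K-1}T^kf\to Pf$ in norm. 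Since $P\1=\1$ and $P=P^*=P^2$, one has $\|Pf_\infty\|^2\geq|\langle Pf_\infty,\1\rangle|^2 = |\langle f_\infty,\1\rangle|^2 = \delta^2$, so $\langle f_\infty,Pf_\infty\rangle = \|Pf_\infty\|^2\geq\delta^2$. Consequently $\frac{1}{K}\sum_{k=0}^{K-1}\langle f_\infty,T^k f_\infty\rangle\to\|Pf_\infty\|^2\geq\delta^2$, and for $K$ sufficiently large we select some $k\geq 2$ with $\langle f_\infty,T^k f_\infty\rangle\geq\delta^2-\epsilon/2$.

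For this fixed $k$, extract the infinite set $I$ by a diagonal construction. Weak convergence $f_j\to f_\infty$ and boundedness of $T^k$ give $\lim_j\langle f_i,T^k f_j\rangle = \langle f_i,T^k f_\infty\rangle$ for each $i$, and then $\lim_i\langle f_i,T^k f_\infty\rangle = \langle f_\infty,T^k f_\infty\rangle$. Choose $i_1<i_2<\cdots$ inductively so that at stage $n$: (i) $|\langle f_{i_n},T^k f_\infty\rangle - \langle f_\infty,T^k f_\infty\rangle|<1/n$, and (ii) for each $m<n$, the bound $|\langle f_{i_m},T^k f_j\rangle - \langle f_{i_m},T^k f_\infty\rangle|<1/n$ holds for every $j\geq i_n$. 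Then whenever $m<n$, the quantity $C_{i_m,i_n}^{(k)} = \langle f_{i_m},T^k f_{i_n}\rangle$ lies within $2/m$ of $\langle f_\infty,T^k f_\infty\rangle\geq\delta^2-\epsilon/2$, so for $m$ large enough $C_{i_m,i_n}^{(k)}\geq\delta^2-\epsilon$; trimming an initial segment of $\{i_n\}$ yields the desired $I$. The principal technical obstacle is establishing $T$ rigorously as a Markov contraction on $\Hilb_\A$ whose powers realize the iterated bilinear forms $B_k$; this requires careful use of \cref{prop_log_TK} (for both $T^*\1=\1$ and the isometric behavior of dilations) together with the commutation of $T$ with individual dilation operators.
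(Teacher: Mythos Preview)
Your approach is correct in outline and genuinely different from the paper's, but one step is wrong as stated and needs repair.

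You claim that ``\cref{lem_dilation_log_averages} applied to $|h|^2$ shows that each dilation $\sigma_p$ is asymptotically isometric on $\Hilb_\A$.'' This is false: for $h=\1_{p\Z}$ one computes $\|h\|^2=1/p$ but $\|h\circ\sigma_p\|^2=\logE_{n\in\bN}\1_{p\Z}(pn)=1$, so the individual dilation $V_p$ has operator norm $\sqrt{p}$, not $1$. The contraction bound $|B(g,h)|\leq\|g\|\|h\|$ you need is nevertheless true, but it comes from averaging over $p$ rather than from a pointwise isometry: by Cauchy--Schwarz twice,
\[
|B(g,h)|\leq \|g\|\,\logE_{p\in\bPW}\|V_ph\|\leq \|g\|\Big(\logE_{p\in\bPW}\logE_{n\in\bN}|h(pn)|^2\Big)^{1/2}=\|g\|\,\|h\|,
\]
the last equality by \cref{cor_log_TK}. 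The same issue resurfaces when you identify $B_k$ with $\langle\,\cdot\,,T^k\,\cdot\,\rangle$: since $V_p$ is not a contraction, the convergence $S_th\to Th$ (with $S_t=\logE_{p\in\P_W\cap[M_t]}V_p$) must be justified via the uniform bound $\sup_t\|S_th\|<\infty$, which again follows from \cref{cor_log_TK}; once that is in place, peeling off one prime average at a time via $\lim_t\langle(T^*)^{k-1}g,S_th\rangle=\langle g,T^kh\rangle$ gives the identification cleanly, and the remainder of your argument (mean ergodic theorem for contractions, weak compactness, diagonal extraction) is sound.

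The paper takes a much more elementary route that avoids constructing $T$ altogether. It fixes $K>3/\epsilon$, colours each pair $(i,j)$ by the least $k\in\{2,\ldots,K\}$ for which the desired inequality holds (colour~$1$ if none), and applies Ramsey's theorem to get an infinite monochromatic $I$. If the colour were~$1$, one picks $i_2<\cdots<i_K\in I$, replaces each $\logE_{n\in\bN}f(a_{i_k}n)$ by $\logE_{p\in\bPW^{*k}}\logE_{n\in\bN}f(a_{i_k}pn)$ via \cref{cor_log_TK_2}, and applies Cauchy--Schwarz to $\E_{k}f(a_{i_k}p_k\cdots p_1n)$; expanding the square produces exactly the correlations assumed to be small, giving a contradiction. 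Your operator-theoretic proof is more conceptual and exhibits the limiting value $\|Pf_\infty\|^2\geq\delta^2$, whereas the paper's argument is shorter and entirely finitary, needing only Ramsey's theorem and a single Cauchy--Schwarz.
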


A proof of \cref{thm_str_component} can be found in \cref{sec_proof_of_thm_str_component}.
We now explain why Theorems~\ref{thm_rnd_component} and~\ref{thm_str_component} are enough to derive Theorems~\ref{thm_1_tech} and~\ref{thm_2_tech}.

\begin{proof}[Proof of \cref{thm_1_tech} assuming Theorems~\ref{thm_rnd_component} and~\ref{thm_str_component}]
Fix $\epsilon>0$. Or goal is to find $W\geq 1$ and $k\geq 2$ such that
\begin{equation}
\label{eqn_1_tech_1}
\logE_{p\in\bPW^{*k}} \logE_{n\in \bN} \1_A(n+Q(p))\1_A(pn)\geq \Big(\logE_{n\in\bN} \1_A(n)\Big)^2-\epsilon.
\end{equation}
If $Q$ is a constant polynomial then the condition $Q(1)=0$ implies that $Q=0$. In this case, 
\eqref{eqn_1_tech_1} follows from \cref{thm_str_component} applied with $f(n)=1_A(n)$ and $a_1=a_2=\ldots=1$.

If $Q$ is non-constant then we proceed as follows. 
Using the decomposition \eqref{eqn_splitting_of_A}, we can rewrite the left hand side as
\begin{align*}
\logE_{p\in\bPW^{*k}} & \logE_{n\in \bN} \1_A(n+Q(p))\1_A(pn) 
\\ 
&=\logE_{p\in\bPW^{*k}} \logE_{n\in \bN} f_{\str}(n+Q(p))f_{\str}(pn)+\overbrace{\logE_{p\in\bPW^{*k}} \logE_{n\in \bN}f_{\str}(n+Q(p))f_{\rnd}(pn)}^{[1]}
\\
&\quad+\underbrace{\logE_{p\in\bPW^{*k}} \logE_{n\in \bN}f_{\rnd}(n+Q(p))f_{\str}(pn)}_{[2]}+\underbrace{\logE_{p\in\bPW^{*k}} \logE_{n\in \bN} f_{\rnd}(n+Q(p))f_{\rnd}(pn)}_{[3]}.
\end{align*}
By \cref{lem_basic_properties_locally_periodic_locally_aperiodic}, we have for all $p$,
\[
\logE_{n\in \bN}f_{\str}(n+Q(p))f_{\rnd}(pn)=0\qquad\text{and}
\qquad \logE_{n\in \bN}f_{\rnd}(n+Q(p))f_{\str}(pn)=0.
\]
This shows that the cross terms {\small[1]} and {\small[2]} disappear, and only the diagonal terms remain. Moreover, in light of \cref{thm_rnd_component} (applied with $f(n)=f_{\rnd}(n)$, $g(n)=f_{\rnd}(n)$, and $a=1$), the final term {\small[3]} also does not contribute to the average for any $W,k\in\N$ as long as $k\geq 2$. This proves that
\begin{align*}
\logE_{p\in\bPW^{*k}} \logE_{n\in \bN} \1_A(n+Q(p))\1_A(pn) 
=\logE_{p\in\bPW^{*k}} \logE_{n\in \bN} f_{\str}(n+Q(p))f_{\str}(pn).
\end{align*}
Since $f_{\str}$ is locally rationally almost periodic, there exists $W\in\N$ such that
\[
\sup_{m'\in\N}\Big(\logE_{n\in\bN}\big|f_{\str}(n+Wm')-f_{\str}(n)\big|^2\Big)\leq \frac{\epsilon^2}{4}.
\]
Using $Q(1)=0$, we see that for every $m\in\N$ there is $m'\in\N$ such that $Q(Wm+1)=m'W$. 
Since any $p=p_1\cdots p_k$, where $p_1,\ldots,p_k\in\P_W$, can be written in the form $Wm+1$ for some $m\in\N$, it follows that
\[
\logE_{p\in\bPW^{*k}}\logE_{n\in\bN}\big|f_{\str}(n+Q(p))-f_{\str}(n)\big|^2\leq \frac{\epsilon^2}{4}.
\]
Therefore, using the Cauchy-Schwarz inequality, we get
\begin{equation}
\label{eqn_1_tech_2}
\logE_{p\in\bPW^{*k}} \logE_{n\in \bN} \1_A(n+Q(p))\1_A(pn) 
\geq \logE_{p\in\bPW^{*k}} \logE_{n\in \bN} f_{\str}(n)f_{\str}(pn)-\frac{\epsilon}{2}.
\end{equation}
Finally, note that $\E^{\log}_{n\in\bN} f_{\rnd}(n)=0$ and hence $\E^{\log}_{n\in\bN}f_{\str}(n)=\E^{\log}_{n\in\bN}\1_A(n)$.
So it follows from \cref{thm_str_component}, applied with $f(n)=f_{\str}(n)$ and $a_1=a_2=\ldots=1$, that there is some $k\geq 2$ for which we have
\begin{equation}
\label{eqn_1_tech_3}
\logE_{p\in\bPW^{*k}} \logE_{n\in \bN} f_{\str}(n)f_{\str}(pn)\geq \Big(\logE_{n\in\bN} f_{\str}(n)\Big)^2-\frac{\epsilon}{2}= \Big(\logE_{n\in\bN} \1_A(n)\Big)^2-\frac{\epsilon}{2}.
\end{equation}
Since \eqref{eqn_1_tech_2} and \eqref{eqn_1_tech_3} combined imply \eqref{eqn_1_tech_1}, the proof is finished.
\end{proof}

\begin{proof}[Proof of \cref{thm_2_tech} assuming Theorems~\ref{thm_rnd_component} and~\ref{thm_str_component}]
We use a similar argument the one used above to show that Theorems~\ref{thm_rnd_component} and~\ref{thm_str_component} imply \cref{thm_1_tech}.
Fix $\epsilon>0$ and let $a_1,a_2,\ldots\in\N$ be a divisible sequence. Our goal is to find $k\geq 2$ and $\{u,vu\}\subset \{a_j:j\in\N\}$ such that
\begin{equation}
\label{eqn_2_tech_1}
\logE_{p\in\bP^{*k}}\logE_{n\in\bN} \1_A(un+Q(vp))\1_A(uvpn) \geq \Big(\limsup_{j\to\infty}\logE_{n\in\bN} \1_A(a_jn)\Big)^2-\epsilon.
\end{equation}
By replacing $(a_j)_{j\in\N}$ with a subsequence of itself if necessary, we can assume without loss of generality that $a_{j+1}$ is a multiple of $a_j$ for all $j\in\N$. This can be done in a way that leaves $\limsup_{j\to\infty}\E^{\log}_{n\in\bN} \1_A(a_jn)$ unchanged.

If $Q$ is constant then $Q(0)=0$ implies $Q=0$. If this is the case then \eqref{eqn_2_tech_1} is a consequence of \cref{thm_str_component}, taking $u=a_i$ and $v=\frac{a_j}{a_i}$.
Therefore, we can assume for the remainder of this proof that $Q$ is non-constant.

Arguing as above, we can use \eqref{eqn_splitting_of_A} to split $\1_A$ into  a ``structured'' component $f_{\str}$ and a ``random'' component $f_{\rnd}$. Consequently, the left hand side of \eqref{eqn_2_tech_1} splits into four terms. The cross terms involving one ``structured'' component $f_{\str}$ and one ``random'' component $f_{\rnd}$ are $0$ due to orthogonality, see \cref{lem_basic_properties_locally_periodic_locally_aperiodic}. Moreover, the term involving twice the ``random'' component $f_{\rnd}$ is $0$ because of \cref{thm_rnd_component} (applied with $f(n)=f_{\rnd}(n)$, $g(n)=f_{\rnd}(a_jn)$, $W=1$, $a=a_i$, and $b=\frac{a_j}{a_i}$). This gives that
\begin{equation}
\label{eqn_2_tech_4}
\logE_{p\in\bP^{*k}} \logE_{n\in \bN} \1_A\big(a_in+Q\big(\tfrac{a_jp}{a_i}\big)\big)\1_A(a_jpn) 
=\logE_{p\in\bP^{*k}} \logE_{n\in \bN} f_{\str}\big(a_in+Q\big(\tfrac{a_jp}{a_i}\big)\big)f_{\str}(a_jpn).
\end{equation}
Invoking \cref{thm_str_component} (with $f(n)=f_{\str}(n)$ and $W=1$), we can find $k\geq 2$ and an infinite set $I\subset\N$, such that for all $i,j\in I$ with $i<j$ we have
\[
\logE_{p\in\bP^{*k}}\logE_{n\in\bN} f_{\str}\big(a_in\big)f_{\str}(a_jpn) \geq  \Big(\limsup_{j\to\infty}\logE_{n\in\bN} f_{\str}(a_jn)\Big)^2-\frac{\epsilon}{2}.
\]
Since $\E^{\log}_{n\in\bN}f_{\str}(bn)=\E^{\log}_{n\in\bN}\1_A(bn)$ for all $b\in\N$, this implies that
\begin{equation}
\label{eqn_2_tech_2}
\logE_{p\in\bP^{*k}}\logE_{n\in\bN} f_{\str}\big(a_in\big)f_{\str}(a_jpn) \geq  \Big(\limsup_{j\to\infty}\logE_{n\in\bN} \1_A(a_jn)\Big)^2-\frac{\epsilon}{2}.
\end{equation}

Let $i\in I$ be fixed for the remainder of this proof.
Using that $f_{\str}$ is locally rationally almost periodic, there exists $q\in\N$ such that
\[
\sup_{m\in\N}\Big(\logE_{n\in\bN}\big|f_{\str}(a_in+qm)-f_{\str}(a_in)\big|^2\Big)\leq \frac{\epsilon^2}{4}.
\]
Since $(a_j)_{j\in \N}$ is divisible, there exist $j\in I$ such that $a_j$ is a multiple of $qa_i$.
Since $Q(0)=0$, for any $p$ the number $Q(\frac{a_jp}{a_i})$ is a multiple of $q$.
It follows that for all $p$ we have
\begin{equation}
\label{eqn_2_tech_5}
\logE_{n\in\bN}\big|f_{\str}\big(a_in+Q\big(\tfrac{a_jp}{a_i}\big)\big)-f_{\str}(a_in)\big|^2\leq \frac{\epsilon^2}{4}.
\end{equation}
Applying \eqref{eqn_2_tech_5} and the Cauchy-Schwarz inequality to \eqref{eqn_2_tech_4}, we can conclude that
\begin{equation}
\label{eqn_2_tech_3}
\logE_{p\in\bP^{*k}} \logE_{n\in \bN} \1_A\big(a_in+Q\big(\tfrac{a_jp}{a_i}\big)\big)\1_A(a_jpn) 
\geq \logE_{p\in\bP^{*k}} \logE_{n\in \bN} f_{\str}(a_in)f_{\str}(a_jpn)-\frac{\epsilon}{2}.
\end{equation}
Since \eqref{eqn_2_tech_2} and \eqref{eqn_2_tech_3} give \eqref{eqn_2_tech_1}, we can take $u=a_i$ and $v=\frac{a_j}{a_i}$ and the proof is complete.
\end{proof}

\section{Proof of the structure theorem}
\label{sec_proof_str_thm}

The purpose of this section is to prove \cref{thm_str}.
As noted previously, it is natural to interpret \cref{thm_str} as a discrete analogue of \cref{cor_rat_toterg_splitting}, and the main idea behind its proof is to transfer \cref{cor_rat_toterg_splitting} from the setting of unitary operators on Hilbert spaces to the setting of the translation operator acting on bounded functions over the integers.

\subsection{The Hilbert space $\mathcal{L}^2(\A, \bN)$}
Let $\bN=(N_s)_{s\in\N}$ with $N_1<N_2<\ldots\in\N$, and let $\A\subset\ell^\infty(\Z)$ be a $C^*$-subalgebra that admits logarithmic averages along $\bN$.
Consider the space
\begin{equation*}
\mathscr{L}^2(\A, \bN) = \left\{ f : \Z \to \C : \forall \epsilon > 0,~\exists g \in \A, \limsup_{s \to \infty} \logE_{n\in[N_s]}|f(n) - g(n)|^2 < \epsilon \right\}.
\end{equation*}
We can define an equivalence relation on $\mathscr{L}^2(\A, \bN)$ via
\[
f \sim g\quad\iff\quad \logE_{n \in \bN} |f(n) - g(n)|^2 = 0.
\]
For any function $f\in \mathscr{L}^2(\A,\bN)$, let $[f]_\sim$ denote the equivalence class of $f$ with respect to $\sim$, and let
\[
\mathcal{L}^2(\A,\bN)=\mathscr{L}^2(\A, \bN)/\sim
\]
denote the quotient space of $\mathscr{L}^2(\A,\bN)$ by $\sim$.
Note that pointwise addition, pointwise multiplication, scalar multiplication, and complex conjugation on $\mathscr{L}^2(\A, \bN)$ factor through to $\mathcal{L}^2(\A,\bN)$ via
\begin{align*}
[f]_\sim+[g]_\sim &=[f+g]_\sim,
\\
[f]_\sim \cdot [g]_\sim &=[f\cdot g]_\sim,
\\
c[f]_\sim&=[cf]_\sim,
\\
\overline{[f]_\sim}&=\big[\overline{f}\big]_\sim,
\end{align*}
for all $f,g\in \mathscr{L}^2(\A,\bN)$ and $c\in\C$.
This shows that $\mathcal{L}^2(\A, \bN)$ is a vector space over $\C$.
If one considers \Cesaro{} averages instead of logarithmic averages, it was shown by Farhangi in \cite[Theorem 2.1]{Farhangi24} that $\mathcal{L}^2(\A, \bN)$ is actually a Hilbert space.
It is straightforward to adapt Farhangi's argument to our situation; for completeness we include a proof below. 

\begin{theorem}
\label{thm_farhangi}
    The space $\mathcal{L}^2(\A, \bN)$ is a Hilbert space with inner product
\begin{equation*}
\langle [f]_\sim,[g]_\sim\rangle = \logE_{n \in \bN} f(n)\overline{g(n)}.
\end{equation*}
\end{theorem}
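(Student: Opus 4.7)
The plan is to verify three things in sequence: (i) the putative inner product is well-defined on $\mathcal{L}^2(\A, \bN)$, (ii) it satisfies the axioms of an inner product, and (iii) the resulting normed space is complete.

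For (i), the key observation is that for $g_1, g_2 \in \A$ the product $g_1 \overline{g_2}$ again lies in $\A$ (since $\A$ is closed under products and conjugation), so $\E^{\log}_{n\in\bN} g_1(n)\overline{g_2(n)}$ exists by the hypothesis that $\A$ admits logarithmic averages along $\bN$. Given $f \in \mathscr{L}^2(\A,\bN)$, choose $g_k \in \A$ with $\limsup_{s}\E^{\log}_{n \in [N_s]} |f-g_k|^2 \to 0$. The finite-sum Cauchy-Schwarz inequality shows that $\bigl|(\E^{\log}_{n\in[N_s]}|f|^2)^{1/2} - (\E^{\log}_{n\in[N_s]}|g_k|^2)^{1/2}\bigr| \leq (\E^{\log}_{n\in[N_s]}|f-g_k|^2)^{1/2}$, and combining this with $\lim_s \E^{\log}_{n\in[N_s]} |g_k|^2$ existing and $(g_k)$ being Cauchy in this seminorm, one deduces that $\lim_s \E^{\log}_{n\in[N_s]}|f|^2$ exists; polarization then gives existence of $\lim_s \E^{\log}_{n\in[N_s]} f(n)\overline{g(n)}$ for any $f,g \in \mathscr{L}^2(\A, \bN)$. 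Independence of the choice of representative follows from Cauchy--Schwarz: $|\E^{\log}_{n\in\bN} (f-f')\overline{g}|^2 \leq (\E^{\log}_{n\in\bN} |f-f'|^2)(\E^{\log}_{n\in\bN} |g|^2) = 0$ when $f \sim f'$.

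Step (ii) is routine. Sesquilinearity and conjugate symmetry follow from the corresponding properties of pointwise operations and conjugation. Positive definiteness on the quotient is built into the definition of $\sim$: $\langle [f]_\sim, [f]_\sim\rangle = \E^{\log}_{n\in\bN} |f|^2 = 0$ is precisely the condition $f \sim 0$.

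The main work is (iii), completeness. Let $([f_k]_\sim)$ be a Cauchy sequence in $\mathcal{L}^2(\A, \bN)$; by passing to a subsequence we may assume $\|[f_{k+1}]_\sim - [f_k]_\sim\| \leq 2^{-k}$. Choose $g_k \in \A$ with $\limsup_s \E^{\log}_{n\in[N_s]}|f_k - g_k|^2 \leq 2^{-2k}$; by Minkowski this yields $\limsup_s \E^{\log}_{n\in[N_s]}|g_{k+1}-g_k|^2 \leq C \cdot 2^{-2k}$. Pick an increasing sequence $s_1 < s_2 < \cdots$ such that for all $s \geq s_k$ we have $\E^{\log}_{n\in[N_s]} |g_{k+1} - g_k|^2 \leq 2 \cdot C \cdot 2^{-2k}$, and moreover $s_k$ grows fast enough that $\log(N_{s_k})/\log(N_{s_{k+1}}) \leq 2^{-k}$. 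Define
\[
f(n) = g_k(n) \quad \text{for } n \in \bigl[N_{s_k},\, N_{s_{k+1}}\bigr),\qquad f(n)=0 \text{ otherwise.}
\]
The plan is to verify that (a) $f \in \mathscr{L}^2(\A,\bN)$, by showing that for any $k$, $\limsup_s \E^{\log}_{n\in[N_s]} |f - g_k|^2$ is small — the sum over $n < N_{s_k}$ contributes at most $O(\log N_{s_k}/\log N_s)$ (bounded since $g_k$ is bounded), the sum over $n \in [N_{s_j}, N_{s_{j+1}})$ for $j \geq k$ is controlled by the telescoping bound $\sum_{j \geq k}\|g_j - g_k\|^2 = O(2^{-2k})$; and (b) $[f_k]_\sim \to [f]_\sim$ in $\mathcal{L}^2(\A, \bN)$, by combining (a) with $\|f_k - g_k\| \leq 2^{-k}$.

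The main obstacle is the diagonal construction in (iii): unlike the classical $L^2$ setting, we cannot invoke a measure-theoretic pointwise limit, so the limit function must be built by hand as a piecewise combination of the approximants $g_k$, and the tail estimates must be tight enough to tolerate the logarithmic weighting $1/n$. The choice of $s_k$ ensuring $\log(N_{s_k})/\log(N_{s_{k+1}}) \leq 2^{-k}$ is exactly what controls the discrepancy introduced by the switch-over points.
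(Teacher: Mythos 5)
Your proposal is correct and takes essentially the same route as the paper: the inner-product axioms are treated as routine, and completeness is established by gluing approximants block-by-block along a sparse subsequence of scales $N_{s_1}<N_{s_2}<\cdots$ chosen so that each new block dominates the logarithmic mass of its predecessors (the paper glues the representatives $f_i$ themselves rather than algebra elements $g_k\in\A$, an immaterial difference). The only blemish is the tail bound written as $\sum_{j\geq k}\|g_j-g_k\|^2=O(2^{-2k})$, which is false verbatim (infinitely many terms each of size about $2^{-2k}$); the intended estimate instead follows from the pointwise bound $|g_j-g_k|^2\leq 2\sum_{\ell\geq k}2^{\ell-k}|g_{\ell+1}-g_\ell|^2$ combined with $\logE_{n\in[N_s]}|g_{\ell+1}(n)-g_\ell(n)|^2\leq 2C\,2^{-2\ell}$ for $s\geq s_\ell$, which sums to $O(2^{-2k})$ and makes your argument go through.
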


\begin{proof}
It is clear that $\mathcal{L}^2(\A,\bN)$ is an inner product space; to prove that it is a Hilbert space it therefore remains to verify that it is complete.
For the remainder of this proof we abuse notation and identify each function in $\mathcal{L}^2(\A,\bN)$  with an arbitrarily chosen representative in its equivalency class in $\mathscr{L}^2(\A, \bN)$.
Let $(f_i)_{i \in \N}$ be a Cauchy sequence in $\mathcal{L}^2(\A,\bN)$. Choose $\epsilon_i \to 0$ such that for every $i,j\in\N$,
\[
\logE_{n \in \bN}|f_j(n) - f_i(n)|^2 \leq \epsilon_{\min\{i,j\}}.
\]
This means for any $i,j\in\N$ there exists some $s_{i,j}\in\N$ such that for every $s\geq s_{i,j}$ we have
\[
\logE_{n \in [N_s]}|f_j(n) - f_i(n)|^2 \leq  2\epsilon_{\min\{i,j\}}.
\]
Now we define $s_0=0$ and $s_j=\max\{s_{1,j},s_{2,j},\ldots,s_{j,j}\}$ for $j\geq 1$.
Then for all $i,j\in\N$ with $i\leq j$ and all $s\geq s_j$ we have
\begin{equation}
\label{eqn_SF_1}
\logE_{n \in [N_s]}|f_j(n) - f_i(n)|^2 \leq  2\epsilon_{i}.
\end{equation}
By replacing $(s_j)$ with a subsequence of itself if necessary, we can assume that $s_j$ is sufficiently larger relative to $s_{j-1}$ so that
\begin{equation}
\label{eqn_SF_2}
\Big(\max_{n\in[N_{s_{j-1}}]}\max_{\ell\leq j-1}|f_\ell(n)|^2\Big)\frac{\sum_{n\in [N_{s_{j-1}}]}\frac{1}{n}}{\sum_{n\in[N_{s_j}]}\frac{1}{n}}\to 0~\text{as}~j\to\infty.
\end{equation}
Define
\[
f(n)=\sum_{i\in\N} \1_{(N_{s_{i-1}},N_{s_i}]}(n) f_i(n).
\]
Then for any $i\in\N$ and $s_{j-1}<s\leq s_{j}$ we have
\begin{align*}
 \logE_{n \in [N_s]} |f(n) - f_i(n)|^2
&=\sum_{\ell=1}^j \logE_{n \in [N_s]}\1_{(N_{s_{\ell-1}},N_{s_\ell}]}(n)|f(n) - f_i(n)|^2
\\
&=\sum_{\ell=1}^j \logE_{n \in [N_s]}\1_{(N_{s_{\ell-1}},N_{s_\ell}]}(n)|f_\ell(n) - f_i(n)|^2
\\
&= [1]+[2]+[3],
\end{align*}
where
\begin{align*}
[1]=\sum_{\ell=1}^{j-2} \logE_{n \in [N_s]}\1_{(N_{s_{\ell-1}},N_{s_\ell}]}(n)|f_\ell(n) - f_i(n)|^2,
\\
[2]=\logE_{n \in [N_s]}\1_{(N_{s_{j-2}},N_{s_{j-1}}]}(n)|f_{j-1}(n) - f_i(n)|^2,
\\
[3]=\logE_{n \in [N_s]}\1_{(N_{s_{j-1}},N_{s_j}]}(n)|f_{j}(n) - f_i(n)|^2.
\end{align*}
It follows from \eqref{eqn_SF_1} that $[2]\leq 2\epsilon_i$ as well as $[3]\leq 2\epsilon_i$. For $[1]$, we have the estimate
\begin{align*}
[1]&\leq 2\Big(\max_{n\in[N_{s_{j-2}}]}\max_{\ell\leq j-2}|f_\ell(n)|^2\Big)\logE_{n \in [N_s]}\1_{[1,N_{s_{j-2}}]}(n),
\\
&\leq 2\Big(\max_{n\in[N_{s_{j-2}}]}\max_{\ell\leq j-2}|f_\ell(n)|^2\Big)\frac{\sum_{n\in [N_{s_{j-2}}]}\frac{1}{n}}{\sum_{n\in[N_{s_{j-1}}]}\frac{1}{n}}.
\end{align*}
Therefore, \eqref{eqn_SF_2} implies that $[1]=\oh_{j\to\infty}(1)$. Combining all of the above shows
\[
\limsup_{s\to\infty}\logE_{n \in [N_s]} |f(n) - f_i(n)|^2\leq 4\epsilon_i,
\]
which implies that the Cauchy sequence $(f_i)_{i\in\N}$ converges to $f$. This proves that $\mathcal{L}^2(\A,\bN)$ is complete.
\end{proof}

\subsection{Proof of \cref{thm_str}}

\begin{proof}[Proof of \cref{thm_str}]
Let $\mathscr{L}^2(\A,\bN)$ and $\mathcal{L}^2(\A,\bN)$ be as defined above, and define $\A'=\mathscr{L}^2(\A,\bN)\cap\ell^\infty(\Z)$. Note that if $\A$ is separable (resp.~translation invariant/dilation invariant/admits iterated affine correlations along $\bPW$) then $\A'$ has the same property. Moreover, due to \cref{thm_farhangi}, $\mathcal{L}^2(\A,\bN)$ is a Hilbert space with inner product $\langle [f]_\sim, [g]_\sim\rangle=\E^{\log}_{n\in\bN} f(n)\overline{g(n)}$ and norm $\|[f]_\sim\|=(\E^{\log}_{n\in\bN}|f(n)|^2)^{1/2}$.
Define an operator $U\colon \mathcal{L}^2(\A,\bN)\to \mathcal{L}^2(\A,\bN)$ via
\[
U[f]_\sim=[f\circ \tau]_\sim,\qquad\forall [f]_\sim\in \mathcal{L}^2(\A,\bN),
\]
where $\tau(n)=n+1$ is the shift map defined in \eqref{eqn_def_dilation_translation}.
Since logarithmic averages are shift invariant, we see that $\langle U[f]_\sim,U[g]_\sim\rangle =\langle [f]_\sim,[g]_\sim\rangle$. In other words, $U$ is a unitary operator on $\mathcal{L}^2(\A,\bN)$. 
It follows from \cref{cor_rat_toterg_splitting} that 
$\mathcal{L}^2(\A,\bN)=\Hilb_{\mathrm{rat}}\oplus \Hilb_{\mathrm{tot\,erg}}$, where $\Hilb_{\mathrm{rat}}$ and $\Hilb_{\mathrm{tot\,erg}}$ are as defined in the statement of \cref{cor_rat_toterg_splitting}.
It is now easy to check that $[f]_\sim\in \Hilb_{\mathrm{rat}}$ if and only if $f$ is locally rationally almost periodic with respect to logarithmic averages along $\bN$, and similarly $[f]_\sim\in\Hilb_{\mathrm{tot\,erg}}$ if and only if $f$ is locally totally ergodic with respect to logarithmic averages along $\bN$. It follows that for any $f\in\A'$ we can find 
$f_{\str},f_{\rnd}\colon \N\to\C$ such that 
$[f_{\str}]_{\sim}\in \Hilb_{\mathrm{rat}}$, $[f_{\rnd}]_{\sim}\in \Hilb_{\mathrm{tot\,erg}}$, and 
\[
f=f_{\str}+f_{\rnd}.
\]
It remains to show that we can take $f_{\str}$ and $f_{\rnd}$ to be bounded, or equivalently, that the equivalency classes of $f_{\str}$ and $f_{\rnd}$ contain representatives that belong to $\ell^\infty(\Z)$.

Suppose $f$ takes values in the closed interval $[a,b]$ for some $a\leq b\in\R$. If we truncate $f_{\str}$ above by $a$ and below by $b$, then the resulting function, let us call if $f_{\str}'$, also has the property that  $[f_{\str}']_\sim\in \Hilb_{\mathrm{rat}}$. Since $[f_{\str}]_\sim$ coincides with the orthogonal projection of $[f]_\sim$ onto $\Hilb_{\mathrm{rat}}$, we know that $[f_{\str}]_\sim$ is the unique element in $\Hilb_{\mathrm{rat}}$ that minimizes the distance between $[f]_\sim$ and $\Hilb_{\mathrm{rat}}$ within the Hilbert space $\mathcal{L}^2(\A,\bN)$. But the distance between $f_{\str}'$ and $f$ is no larger than the distance between $f_{\str}$ and $f$. By uniqueness, we conclude that $[f_{\str}']_\sim=[f_{\str}]_\sim$. 
This shows that with out loss of generality, we can assume that $f_{\str}$ takes values in $[a,b]$. In particular, both $f_{\str}$ and $f_{\rnd}$ are bounded and hence $f_{\str},f_{rnd}\in\A'$, completing the proof. 
\end{proof}

\section{Controlling the ``random'' component}

The goal of this section is to provide a proof of \cref{thm_rnd_component}. This proof has two main ingredients, a spectral characterization of locally totally ergodic functions given in \cref{sec_spectral_char_locally_periodic_locally_aperiodic}, and a new multiplicative analogue of van der Corput's inequality proved in \cref{sec_multiplicative_vdC}.

\subsection{Spectral characterizations of locally rationally almost periodic and locally totally ergodic functions}
\label{sec_spectral_char_locally_periodic_locally_aperiodic}

Recall that we identify the torus $\T=\R/\Z$ with the interval $[0,1)$ in the natural way.
In particular, we call an element of $\T$ \define{rational} if it corresponds to a rational number in $[0,1)$, and \define{irrational} otherwise.
By abuse of language, we say that a finite Borel measure on $\T$ is supported on rational (resp.~irrational) numbers if the set of rational (resp.~irrational) numbers in $\T$ has full measure. 
\begin{theorem}
\label{thm_spectral_char_per_aper}
Let $\bN=(N_s)_{s\in\N}$ with $N_1<N_2<\ldots\in\N$, and let $\A\subset\ell^\infty(\Z)$ be a translation invariant $C^*$-subalgebra that admits logarithmic averages along $\bN$.
Let $f\in\A$. The following hold:
\begin{enumerate}
[label=(\roman{enumi}),ref=(\roman{enumi}),leftmargin=*]
\item
\label{itm_spectral_char_per_aper_i}
if $f$ is locally rationally almost periodic then its spectral measure $\mu_f$ is supported on rational numbers.
\item
\label{itm_spectral_char_per_aper_ii}
if $f$ is locally totally ergodic then its spectral measure $\mu_f$ is supported on irrational numbers.
\end{enumerate}
\end{theorem}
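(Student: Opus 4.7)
The plan is to use the spectral identity from \cref{lem_spectral_measure_isometry_property} to translate the defining conditions of local quasiperiodicity and local aperiodicity into integral inequalities against $\mu_f$, and then read off the support of $\mu_f$ from the pointwise behavior of the resulting trigonometric integrands. Throughout, for $q\in\N$, let $E_q=\{x\in\T:qx\in\Z\}=\{0,\tfrac{1}{q},\ldots,\tfrac{q-1}{q}\}$; this is a finite subset of $\Q\cap[0,1)$, and the collection $\bigcup_{q\in\N}E_q$ is exactly $\Q\cap[0,1)$.

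For part \ref{itm_spectral_char_per_aper_i}, I would fix $\epsilon>0$ and use local quasiperiodicity to choose $q\in\N$ so that $\sup_{m\in\N}\logE_{n\in\bN}|f(n+qm)-f(n)|^2\leq\epsilon$. By \cref{lem_spectral_measure_isometry_property} applied with $\ell=2$, $c_1=1$, $m_1=qm$, $c_2=-1$, $m_2=0$, this becomes
\[
\int_\T|e(qmx)-1|^2\d\mu_f(x)\leq\epsilon\qquad\text{for every }m\in\N.
\]
Averaging in $m\in[H]$ and swapping sum and integral, a direct geometric-series calculation shows that $\E_{m\in[H]}|e(qmx)-1|^2$ stays uniformly bounded by $4$ and converges pointwise to the function $2(1-\1_{E_q}(x))$. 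Dominated convergence then yields $2\mu_f(\T\setminus E_q)\leq\epsilon$. Choosing a sequence $\epsilon_k\to 0$ with corresponding $q_k\in\N$, the union $\bigcup_k E_{q_k}\subset\Q$ carries full $\mu_f$-mass, so $\mu_f$ is supported on the rationals.

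For part \ref{itm_spectral_char_per_aper_ii}, I would apply \cref{lem_spectral_measure_isometry_property} with $\ell=H$, $c_h=1/H$, and $m_h=qh$ to rewrite the defining condition of local aperiodicity as
\[
\lim_{H\to\infty}\int_\T\Big|\E_{h\in[H]}e(qhx)\Big|^2\d\mu_f(x)=0\qquad\text{for every }q\in\Z\setminus\{0\}.
\]
The integrand is bounded by $1$ and converges pointwise to $\1_{E_q}$, so dominated convergence gives $\mu_f(E_q)=0$ for every $q\in\N$; since $\Q\cap[0,1)$ is the countable union of the sets $E_q$, it follows that $\mu_f$ is supported on the irrationals.

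I do not anticipate a serious obstacle. The only mild care required is in part \ref{itm_spectral_char_per_aper_i}, where one must identify the pointwise limit of the \Cesaro{} average of $|e(qmx)-1|^2$ separately on the two regions $E_q$ and $\T\setminus E_q$ and justify the interchange of limit and integral; both of these steps are handled by dominated convergence.
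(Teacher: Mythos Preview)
Your proposal is correct and follows essentially the same approach as the paper. The paper first records the identity $\mu_f(E_q)=\lim_{M\to\infty}\E_{m\in[M]}\hat\mu_f(qm)$ and then argues with Fourier coefficients, whereas you apply \cref{lem_spectral_measure_isometry_property} directly to the $L^2$-type quantities in the definitions; since $|e(qmx)-1|^2=2-2\Re(e(qmx))$, these computations are two phrasings of the same dominated-convergence argument.
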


\begin{proof}
Suppose $f\in\ell^\infty(\Z)$, and let $\mu_f$ denote its spectral measure.
Using the dominated convergence theorem, we obtain for every $q\in\N$ that
\begin{align*}
\lim_{M\to\infty} \E_{m\in[M]}\hat\mu_f(qm)
&=\lim_{M\to\infty} \E_{m\in[M]}\int_{\T} e(qmx)\d\mu_f(x)  
\\
&=\int_{\T} \bigg( \lim_{M\to\infty} \E_{m\in[M]}(e(qmx)\bigg)\d\mu_f(x) 
\\
&=\int_{\T} \1_{\big\{0,\frac{1}{q},\ldots,\frac{q-1}{q}\big\}}(x)\d\mu_f(x).
\end{align*}
Therefore, we have for all $q\in\N$,
\begin{equation}
\label{eqn_spectral_char_per_aper_1}
\mu_f\big(\big\{0,\tfrac{1}{q},\ldots,\tfrac{q-1}{q}\big\}\big)= \lim_{M\to\infty} \E_{m\in[M]}\hat\mu_f(qm).
\end{equation}

We will use \eqref{eqn_spectral_char_per_aper_1} to prove both \ref{itm_spectral_char_per_aper_i} and \ref{itm_spectral_char_per_aper_ii}.
Let us first show \ref{itm_spectral_char_per_aper_i}. If $f$ is locally rationally almost periodic then, by definition, for all $\epsilon>0$ there exists some $q\in\N$ such that
\[
\sup_{m\in\N}|\hat\mu_f(qm)-\hat\mu_f(0)|<\epsilon.
\]
This implies that
\[
\mu_f(\T)=\hat\mu_F(0)= \int_{\T} e(qmx)\d\mu_f(x) + \Oh(\epsilon),
\]
where the error term $\Oh(\epsilon)$ is independent of $m$. Averaging over $m$ and using \eqref{eqn_spectral_char_per_aper_1}, we obtain
\begin{align*}
\mu_f(\T)
= \mu_f\big(\big\{0,\tfrac{1}{q},\ldots,\tfrac{q-1}{q}\big\}\big)+ \Oh(\epsilon).
\end{align*}
In other words, up to an $\epsilon$-error, the measure $\mu_f$ is supported on the set $\big\{0,\frac{1}{q},\ldots,\frac{q-1}{q}\big\}$. Letting $\epsilon$ go to $0$ proves that $\mu_f$ is supported on the rationals.

To prove \ref{itm_spectral_char_per_aper_ii}, assume $f$ is locally totally ergodic. From the definition, it follows that for all $q\in\N$ we have
\[
\lim_{M\to\infty} \E_{m\in[M]}\hat\mu_f(qm)=0.
\]
In light of \eqref{eqn_spectral_char_per_aper_1}, this means that $\mu_f\big(\big\{0,\frac{1}{q},\ldots,\frac{q-1}{q}\big\}\big)=0$ for all $q\in\N$. But if this holds for all $q\in\N$ then the measure $\mu_f$ must be supported on the irrationals. 
\end{proof}

\subsection{A multiplicative analogue of van der Corput's inequality}
\label{sec_multiplicative_vdC}

Van der Corput's fundamental inequality (see~\cite[Lemma 3.1]{KN74}) states that for any $1$-bounded function $f\colon\Z\to\C$ and any $N,H\in\N$ with $1\leq H\leq N$ one has
\begin{equation}
\label{eqn_vdC1}
\Big|\E_{h\in [H]} \E_{n\in[N]} f(n) \Big|^2
\leq \Re\bigg(
\E_{h\in[H]} \Big(\tfrac{H-h}{H}\Big) \E_{n\in[N]} f(n)\overline{f(n+h)} \bigg)+\Oh\bigg(\frac{1}{H}+\frac{H}{N}\bigg).
\end{equation}
It is an instrumental inequality used for proving correlation estimates, convergence theorems, and uniform distribution results in number theory, harmonic analysis, and ergodic theory.
A small refinement of the classical formulation of van der Corput's inequality, and one that is often overlooked, asserts that for any $1$-bounded functions $f,g\colon\Z\to\C$ and any $N,H\in\N$ with $1\leq H\leq N$,
\begin{equation}
\label{eqn_vdC2}
\Big|\E_{h\in [H]} \E_{n\in[N]} f(n) g(n+h) \Big|^2
\leq \Re\bigg(
\E_{h\in[H]} \Big(\tfrac{H-h}{H}\Big) \E_{n\in[N]} f(n)\overline{f(n+h)} \bigg)+\Oh\bigg(\frac{1}{H}+\frac{H}{N}\bigg).
\end{equation}
We remark that the standard proof of \eqref{eqn_vdC1} (such as the one given in \cite[Lemma 3.1]{KN74}) with minimal adjustments proves \eqref{eqn_vdC2} as well.

One can also derive a logarithmically averaged version of van der Corput's inequality: For any 
$1$-bounded functions $f,g\colon\Z\to\C$ and any $N,H\in\N$ with $1\leq H\leq N$ we have
\begin{equation}
\label{eqn_vdC3}
\Big|\logE_{h\in [H]} \logE_{n\in[N]} f(n) g(n+h) \Big|^2
\leq \Re\bigg(
\logE_{h\in[H]} \logE_{n\in[N]} f(n)\overline{f(n+h)} \bigg)+\Oh\bigg(\frac{1}{\log(H)}+\frac{\log(H)}{\log(N)}\bigg).
\end{equation}
Since we do not make use of \eqref{eqn_vdC3} in this paper, we omit its proof. It merely serves as motivation for the following novel analogue of \eqref{eqn_vdC3} that uses multiplicative instead of additive differences and plays a crucial role in our argument.

\begin{theorem}
\label{thm_mvc}
Let $y\geq 1$, $P\subset \P\cap [y]$, and suppose $f_p,g\colon \Z\to\C$ are $1$-bounded functions for all $p\in P$. Then
\[
\Big|\logE_{p\in P} \logE_{n\in[N]} f_p(n)g(pn)\Big|^2
\,\leq\, \Re\bigg(
\logE_{p,q\in P} \logE_{n\in[N]} f_p(qn)\overline{f_q(pn)} \bigg)+\Oh\Bigg(\bigg(\sum_{p\in P}\frac{1}{p} \bigg)^{-1}+ \frac{\log y}{\log N}\Bigg).
\]
\end{theorem}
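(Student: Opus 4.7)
The plan is to mimic the standard proof of the additive van der Corput inequality, with multiplicative dilations playing the role of additive shifts. The near-shift-invariance of the Cesaro average used in the proof of \eqref{eqn_vdC1} is replaced by the near-dilation-invariance of the logarithmic average as quantified by \cref{lem_dilation_log_averages}, applied twice in the course of the argument.

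First, I apply \cref{lem_dilation_log_averages} to the function $n \mapsto f_p(n)g(pn)$ with modulus $p$ to obtain, uniformly in $p \in P \subset \P \cap [y]$,
\[
\logE_{n \in [N]} f_p(n)g(pn) = \logE_{n \in [N]} p\,\1_{p \mid n}\, f_p(n/p)\, g(n) + \Oh\Big(\tfrac{\log p}{\log N}\Big).
\]
Averaging over $p \in P$ and swapping the order yields
\[
\logE_{p \in P} \logE_{n \in [N]} f_p(n)g(pn) = \logE_{n \in [N]} g(n)\, h(n) + \Oh\Big(\tfrac{\log y}{\log N}\Big),
\]
where $h(n) := \logE_{p \in P} p\,\1_{p \mid n}\, f_p(n/p)$. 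Applying Cauchy-Schwarz to the logarithmic average over $n$, using $|g| \leq 1$, and expanding the square then gives
\[
\Big|\logE_{p \in P} \logE_{n \in [N]} f_p(n)g(pn)\Big|^2 \leq \logE_{p, q \in P}\logE_{n \in [N]} pq\,\1_{p \mid n}\1_{q \mid n}\, f_p(n/p)\overline{f_q(n/q)} + \Oh\Big(\tfrac{\log y}{\log N}\Big).
\]

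Next, I split the double average over $P \times P$ into the diagonal part $p = q$ and the off-diagonal part $p \neq q$. On the diagonal, $\1_{p \mid n}^2 = \1_{p \mid n}$ and $\logE_{n \in [N]} \1_{p \mid n} \leq 1/p$; a direct computation shows the diagonal contribution is bounded in absolute value by $(\sum_{p \in P} 1/p)^{-1}$. Off the diagonal $p$ and $q$ are distinct primes, so $\1_{p \mid n}\1_{q \mid n} = \1_{pq \mid n}$, and I apply \cref{lem_dilation_log_averages} a second time with modulus $pq$ to the function $m \mapsto f_p(qm)\overline{f_q(pm)}$; this transforms the inner average into $\logE_{n \in [N]} f_p(qn)\overline{f_q(pn)}$ with error $\Oh(\log(pq)/\log N) = \Oh(\log y /\log N)$. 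Taking real parts of the resulting inequality (the left-hand side being real) and re-absorbing the diagonal into the full $p, q \in P$ average yields the claim. The step that requires the most care is this second application of \cref{lem_dilation_log_averages}: the two independent dilations $n/p$ and $n/q$ coming from the diagonal factors $f_p(n/p)\overline{f_q(n/q)}$ must reorganize, under the substitution $n = pq m$, into the symmetric product $f_p(qn)\overline{f_q(pn)}$ appearing in the statement. Everything else amounts to bookkeeping of errors using $p, q \leq y$.
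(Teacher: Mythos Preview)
Your proof is correct and follows essentially the same approach as the paper's: both apply \cref{lem_dilation_log_averages} to factor out $g$, use Cauchy--Schwarz, expand the square, and apply \cref{lem_dilation_log_averages} a second time with modulus $pq$. The only cosmetic difference is in the diagonal treatment---the paper globally replaces $\1_{\operatorname{lcm}(p,q)\mid n}$ by $\1_{pq\mid n}$ and bounds the correction term, whereas you split into $p=q$ and $p\neq q$ and re-absorb at the end; both bookkeepings lead to the same error bound $(\sum_{p\in P}1/p)^{-1}$.
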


\begin{remark}
If $g(n)$ is a multiplicative function (i.e.~$g(nm)=g(n)g(m)$ whenever $\gcd(n,m)=1$) and $f_p(n)=f(n)$ for some $f\colon\Z\to\C$ and all $p\in P$, then \cref{thm_mvc} is closely related to the Daboussi-Delange-\Katai{}-Bourgain-Sarnak-Ziegler orthogonality criterion \cite{DD82, Katai86, BSZ13}.
\end{remark}

\begin{proof}[Proof of \cref{thm_mvc}]
Using \cref{lem_dilation_log_averages}, we can write
\begin{align*}
\Big|\logE_{p\in P} \logE_{n\in[N]} f_p(n)g(pn)\Big|^2
&= \Big|\logE_{p\in P} \logE_{n\in[N]}p \1_{p\mid n} f_p\big(\tfrac{n}{p}\big)g(n)  \Big|^2 + \Oh\bigg(\frac{\log y}{\log N}\bigg)
\\
&\leq \logE_{n\in[N]}\Big|\logE_{p\in P}  p \1_{p\mid n} f_p\big(\tfrac{n}{p}\big)\Big|^2 + \Oh\bigg(\frac{\log y}{\log N}\bigg)
\\
&= \logE_{p,q\in P} \logE_{n\in[N]} pq \1_{\operatorname{lcm}(p,q)\mid n} f_p\big(\tfrac{n}{p}\big)\overline{f_q\big(\tfrac{n}{q}\big)} + \Oh\bigg(\frac{\log y}{\log N}\bigg)
\\
&= 
\logE_{p,q\in P} \logE_{n\in[N]} pq \1_{pq\mid n} f_p\big(\tfrac{n}{p}\big)\overline{f_q\big(\tfrac{n}{q}\big)} + [1]+ \Oh\bigg(\frac{\log y}{\log N}\bigg),
\end{align*}
where
\begin{align*}
[1]
&= \logE_{p,q\in P} \1_{p=q}\Big( \logE_{n\in[N]} p^2 \1_{p\mid n} \big|f_p\big(\tfrac{n}{p}\big)\big|^2 - \logE_{n\in[N]} p^2 \1_{p^2\mid n} \big|f_p\big(\tfrac{n}{p}\big)\big|^2\Big)
\\
&\leq
\logE_{p,q\in P} \1_{p=q}
\Big( \logE_{n\in[N]} p^2 \1_{p\mid n} \Big)
\\
&\leq \bigg(\sum_{p\in P}\frac{1}{p} \bigg)^{-2} \sum_{p\in P}\frac{1}{p^2}\Big(\logE_{n\in[N]}~p^2\1_{p\mid n}\Big)
\\
&\leq \bigg(\sum_{p\in P}\frac{1}{p} \bigg)^{-1}.
\end{align*}
Using \cref{lem_dilation_log_averages} once more, we see that
\begin{align*}
\logE_{p,q\in P} \logE_{n\in[N]} pq \1_{pq\mid n} f_p\big(\tfrac{n}{p}\big)\overline{f_q\big(\tfrac{n}{q}\big)} 
=
\Re\Big(\logE_{p,q\in P} \logE_{n\in[N]} f_p(qn)\overline{f_q(pn)}\Big) + \Oh\bigg(\frac{\log y}{\log N}\bigg).
\end{align*}
Combining all the estimates above completes the proof.
\end{proof}

\subsection{Proof of \cref{thm_rnd_component}}
\label{sec_proof_of_thm_rnd_component}

We now have all the necessary ingredients to provide a proof of \cref{thm_rnd_component}. For the convenience of the reader, let us restate the theorem here.

\begin{named}{\cref{thm_rnd_component}}{}{}
Let $N_1<N_2<\ldots\in\N$ and $M_1<M_2<\ldots\in\N$. Suppose $\A\subset\ell^\infty(\Z)$ is a separable, affinely invariant $C^*$-subalgebra that admits logarithmic averages along $\bN=([N_s])_{s\in\N}$ and iterated affine correlations along $\bPW=(\P_W\cap [M_t])_{t\in\N}$ for all $W\in\N$.
If $f\in\A$ is locally totally ergodic with respect to logarithmic averages along $\bN$, then for all $k,W,a\in\N$ with $k\geq 2$, all $g\in\A$, and all non-constant polynomials $Q$ with integer coefficients we have
\[
\logE_{p\in\bPW^{*k}}\logE_{n\in\bN} f\big(an+Q(p)\big)g(pn)=0.
\]
\end{named}

\begin{proof}
It suffices to prove the case $k=2$.
Using the Cauchy-Schwarz inequality and \cref{thm_mvc}, we obtain
\begin{align*}
\Big|\logE_{p_2\in \bPW}\logE_{p_1\in \bPW} & \logE_{n\in \bN} f(an+Q(p_2p_1))g(p_2p_1n)\Big|^2
\\
&=\lim_{t\to\infty}\Big|\logE_{p_2\in \P_W\cap[M_t]} \logE_{p_1\in \bPW} \logE_{n\in \bN} f(an+Q(p_2p_1))g(p_2p_1n)\Big|^2
\\
&\leq \lim_{t\to\infty} \logE_{p_1\in \bPW} \Big|\logE_{p_2\in \P_W\cap[M_t]} \logE_{n\in \bN} f(an+Q(p_2p_1))g(p_2p_1n)\Big|^2
\\
&\leq \lim_{t\to\infty} \logE_{p_1\in \bPW} \Big(\Re\Big(\logE_{p_2,p_3\in \P_W\cap[M_t]} \logE_{n\in \bN} f(ap_3n+Q(p_2p_1))\overline{f(ap_2n+Q(p_3p_1))}\Big)\Big)
\\
&= \lim_{t\to\infty}\logE_{p_2,p_3\in \P_W\cap[M_t]}\Re\Big(  \logE_{p_1\in \bPW} \logE_{n\in \bN} f(ap_3n+Q(p_2p_1))\overline{f(ap_2n+Q(p_3p_1))}\Big).
\end{align*}
We claim that whenever $p_2\neq p_3$ then
\begin{equation}
\label{eqn_locally_aperiodic_have_no_contribution_1}
\logE_{p_1\in \bPW} \logE_{n\in \bN} f(ap_3n+Q(p_2p_1))\overline{f(ap_2n+Q(p_3p_1))}=0.
\end{equation}
Once this claim has been verified, it follows that only the diagonal terms $p_2=p_3$ contribute, which implies that
\begin{align*}
\Big|\logE_{p_2\in \bPW}\logE_{p_1\in \bPW} & \logE_{n\in \bN} f(an+Q(p_2p_1))g(p_2p_1n)\Big|^2
\leq \|f\|_\infty^2  \Big(\lim_{t\to\infty}\logE_{p_2,p_3\in \P_W\cap[M_t]} \1_{p_2=p_3}\Big)
=0.
\end{align*}

It remains to verify \eqref{eqn_locally_aperiodic_have_no_contribution_1} under the assumption $p_2\neq p_3$. By symmetry, we can assume without loss of generality that $p_3>p_2$.
Let $q=ap_2p_3$. 
First, we split the right hand side of \eqref{eqn_locally_aperiodic_have_no_contribution_1} into residue classes mod $q$.
Observe that for all $m\in\Z$,
\[
Q(p_2(qm+r))\equiv Q(p_2r)\bmod q\qquad\text{and}\qquad
Q(p_3(qm+r))\equiv Q(p_3r)\bmod q.
\]
Define
\[
Q_{2,r}(n)=\frac{Q(p_2n)-Q(p_2r)}{ap_3}\qquad\text{and}\qquad
Q_{3,r}(n)=\frac{Q(p_3n)-Q(p_3r)}{ap_2}.
\]
Therefore
\begin{align*}
&\logE_{p_1\in \bPW}  \logE_{n\in \bN} f(ap_3n+Q(p_2p_1))\overline{f(ap_2n+Q(p_3p_1))}
\\
&~=\sum_{r=0}^{q-1} \logE_{p_1\in \bPW} \logE_{n\in \bN} \1_{q\Z+r}(p_1)\, f(ap_3n+Q(p_2p_1))\overline{f(ap_2n+Q(p_3p_1))}
\\
&~=\sum_{r=0}^{q-1} \logE_{p_1\in \bPW} \logE_{n\in \bN} \1_{q\Z+r}(p_1)\, f\big(ap_3\big(n+Q_{2,r}(p_1)\big)+Q(p_2r)\big)
\\
&\hspace{19em}\overline{f\big(ap_2\big(n+Q_{3,r}(p_1)\big)+Q(p_3r)\big)}.
\end{align*}
To simplify notation, let us define, for $r\in\{0,\ldots,q-1\}$, the functions 
\[
f_{r,1}(n)=f(ap_3n+Q(p_2r))\qquad\text{and}\qquad f_{r,2}(n)=f(ap_2n+Q(p_3r)).
\]
Then we can rewrite the above as
\begin{align*}
\logE_{p_1\in \bPW} & \logE_{n\in \bN} f(ap_3n+Q(p_2p_1))\overline{f(ap_2n+Q(p_3p_1))}
\\
&~=\sum_{r=0}^{q-1} \logE_{p_1\in \bPW} \logE_{n\in \bN} \1_{q\Z+r}(p_1)\, f_{r,1}(n+Q_{2,r}(p_1))\overline{f_{r,2}(n+Q_{3,r}(p_1))}
\\
&~=\sum_{r=0}^{q-1} \logE_{p_1\in \bPW} \logE_{n\in \bN} \1_{q\Z+r}(p_1)\, f_{r,1}(n)\overline{f_{r,2}(n+Q_{3,r}(p_1)-Q_{2,r}(p_1))}.
\end{align*}
After exchanging the order of summation in the variables $n$ and $p_1$, using the Cauchy-Schwarz inequality, and finally expanding the square, we can eliminate the function $f_{r,1}$ and obtain a single correlation expression in the function $f_{r,2}$:
\begin{align*}
\Big|\logE_{p_1\in \bPW} & \logE_{n\in \bN} f(ap_3n+Q(p_2p_1))\overline{f(ap_2n+Q(p_3p_1))}\Big|
\\
&~=\bigg|\sum_{r=0}^{q-1} \Big(\lim_{t\to\infty}\logE_{p_1\in \P_W\cap [M_t]} \logE_{n\in \bN} \1_{q\Z+r}(p_1)\, f_{r,1}(n)\overline{f_{r,2}(n+Q_{3,r}(p_1)-Q_{2,r}(p_1))}\Big)\bigg|
\\
&~\leq \|f\|_\infty  \sum_{r=0}^{q-1}\lim_{t\to\infty} \bigg( \logE_{n\in \bN}  \Big|\logE_{p_1\in \P_W\cap [M_t]}\1_{q\Z+r}(p_1)\, f_{r,2}(n+Q_{3,r}(p_1)-Q_{2,r}(p_1))\Big|^2\bigg)^{\frac{1}{2}}.
\end{align*}
Let $\mu_{f_{r,2}}$ be the spectral measure associated to $f_{r,2}$, as defined in \eqref{eqn_spectral_measure}. 
In light of \cref{lem_spectral_measure_isometry_property}, it follows that
\begin{align*}
\lim_{t\to\infty} \bigg( \logE_{n\in \bN} &  \Big|\logE_{p_1\in \P_W\cap [M_t]}\1_{q\Z+r}(p_1)\, f_{r,2}(n+Q_{3,r}(p_1)-Q_{2,r}(p_1))\Big|^2\bigg)^{\frac{1}{2}}
\\
&= \lim_{t\to\infty} \bigg(\int_\T \Big|\logE_{p_1\in \P_W\cap [M_t]}\1_{q\Z+r}(p_1)\, e\big(\big(Q_{3,r}(p_1)-Q_{2,r}(p_1)\big)x\big)\Big|^2\d\mu_{f_{r,2}}(x)\bigg)^{\frac{1}{2}}.
\end{align*}
Since $Q$ is non-constant and $p_2$ and $p_3$ are distinct primes, the polynomial $x\mapsto Q_{3,r}(x)-Q_{2,r}(x)$ is non-constant with rational coefficients.
According to \cref{cor_log_vinogradov}, if $x$ is irrational then
\[
\lim_{t\to\infty} \logE_{p_1\in \P_W\cap [M_t]}\1_{q\Z+r}(p_1)\, e\big(\big(Q_{3,r}(p_1)-Q_{2,r}(p_1)\big)x\big)=0.
\]
By assumption, $f$ is locally totally ergodic. Due to \cref{lem_basic_properties_locally_periodic_locally_aperiodic}, this means that $f_{r,2}$ is also locally totally ergodic. In view of the spectral characterization given by \cref{thm_spectral_char_per_aper}, the spectral measure $\mu_{f_{r,2}}$ of $f_{r,2}$ is supported on irrational numbers. It thus follows by the Lebesgue's dominated convergence theorem that
\[
\lim_{t\to\infty} \bigg(\int_\T \Big|\logE_{p_1\in \P_W\cap [M_t]}\1_{q\Z+r}(p_1)\, e\big(\big(Q_{3,r}(p_1)-Q_{2,r}(p_1)\big)x\big)\Big|^2\d\mu_{f_{r,2}}(x)\bigg)^{\frac{1}{2}}=0.
\]
This concludes the proof.
\end{proof}

\section{Controlling the ``structured'' component}
\label{sec_proof_of_thm_str_component}

In this section, we give a proof of \cref{thm_str_component}.
Let us restate the theorem:

\begin{named}{\cref{thm_str_component}}{}
Let $N_1<N_2<\ldots\in\N$ and $M_1<M_2<\ldots\in\N$. Suppose $\A\subset\ell^\infty(\Z)$ is a separable, affinely invariant $C^*$-subalgebra that admits logarithmic averages along $\bN=([N_s])_{s\in\N}$ and iterated affine correlations along $\bPW=(\P_W\cap [M_t])_{t\in\N}$ for all $W\in\N$.
Suppose $a_1,a_2,\ldots\in\N$ and $f\in\A$ with $f(n)\geq 0$ for all $n\in\N$.
Then for any $\epsilon>0$ and any $W\in\N$ there are $k\geq 2$ and an infinite set $I\subset\N$, such that for all $i,j\in I$ with $i<j$ we have
\begin{equation}
\label{eqn_improved_DE}
\logE_{p\in\bPW^{*k}} \logE_{n\in \bN} f(a_in)f(a_jpn) \geq  \Big(\limsup_{j\to\infty}\logE_{n\in\bN} f(a_jn)\Big)^2-\epsilon.    
\end{equation}
\end{named}

\begin{proof}
Let $\epsilon>0$ be given, and define $\delta=\limsup_{j\to\infty}\E^{\log}_{n\in\bN} f(a_jn)$.
By replacing $(a_j)_{j\in\N}$ with a subsequence of itself, we can assume without loss of generality that
\begin{equation}
\label{eqn_improved_DE_2}
\logE_{n\in\bN} f(a_jn)\geq \delta-\frac{\epsilon}{3},\qquad\forall j\in\N.
\end{equation}
Let $K\in\N$ be any natural number that satisfies $K>3\epsilon^{-1}$.
We now define a $K$ coloring of the set $\{(i,j)\in\N\times\N: i<j\}$, and 
we represent this coloring as a map $\chi\colon \{(i,j)\in\N\times\N: i<j\} \to \{1,\ldots,K\}$.
Given $i,j\in\N$ with $i<j$, define
\[
\chi(i,j)=
\begin{cases}
k,&\text{if $k$ is the smallest number in $\{2,\ldots,K\}$ for which \eqref{eqn_improved_DE} holds;}
\\
1,&\text{if \eqref{eqn_improved_DE} doesn't hold for any $k\in\{2,\ldots,K\}$}.
\end{cases}
\]
By Ramsey's theorem \cite[Theorem~A]{Ramsey30}, there exists an infinite set $I\subset \N$ such that $\chi(i,j)$ has the same color for all $i,j\in I$ with $i<j$.
If this color belongs to the set $\{2,\ldots,K\}$ then we are done.
So it remains to show that $1$ is not an admissible color.
By way of contradiction, suppose $\chi(i,j)=1$ for all $i,j\in I$ with $i<j$.
This implies that all $i,j\in I$ with $i<j$,
\[
\max_{k=2,\ldots,K}\Big(\logE_{p\in\bPW^{*k}} \logE_{n\in \bN} f(a_in)f(a_jpn)\Big) < \delta^2-\epsilon.    
\]
Let $i_2<\ldots<i_K\in I$ be arbitrary. 
Using \eqref{eqn_improved_DE_2}, \cref{cor_log_TK_2}, and the Cauchy-Schwarz inequality, we have
\begin{align*}
\Big(\delta-\frac{\epsilon}{3}\Big)^2 &\leq \Big( \E_{k\in\{2,\ldots,K\}} \logE_{n\in\bN} f(a_{i_k}n) \Big)^2
\\
&= \Big( \E_{k\in\{2,\ldots,K\}} \logE_{p\in\bPW^{*k}} \logE_{n\in\bN} f(a_{i_k}pn) \Big)^2
\\
&= \Big( \logE_{p_K\in\bPW} \cdots \logE_{p_1\in\bPW} \logE_{n\in\bN} \E_{k\in\{2,\ldots,K\}}  f(a_{i_k}p_k\cdots p_1n) \Big)^2
\\
&\leq \logE_{p_K\in\bPW} \cdots \logE_{p_1\in\bPW} \logE_{n\in\bN} \Big( \E_{k\in\{2,\ldots,K\}} f(a_{i_k}p_k\cdots p_1n) \Big)^2
\\
&= \E_{k,\ell\in\{2,\ldots,K\}} \logE_{p_K\in\bPW} \cdots \logE_{p_1\in\bPW} \logE_{n\in\bN} f(a_{i_\ell} p_\ell\cdots p_1n) f(a_{i_k}p_k\cdots p_1n)
\\
&\leq  \E_{k,\ell\in\{2,\ldots,K\}} 2\1_{\ell<k} \logE_{p_K\in\bPW} \cdots \logE_{p_1\in\bPW} \logE_{n\in\bN} f(a_{i_\ell} p_\ell\cdots p_1n) f(a_{i_k}p_k\cdots p_1n) + \frac{1}{K}
\\
&=  \E_{k,\ell\in\{2,\ldots,K\}} 2\1_{\ell<k} \logE_{p_K\in\bPW} \cdots \logE_{p_1\in\bPW} \logE_{n\in\bN} f(a_{i_\ell} n) f(a_{i_k}p_k\cdots p_{k-\ell+1}n) + \frac{1}{K-1}
\\
&=  \E_{k,\ell\in\{2,\ldots,K\}} 2\1_{\ell<k} \logE_{p\in\bPW^{*\ell}} \logE_{n\in\bN} f(a_{i_\ell} n) f(a_{i_k}pn) + \frac{1}{K-1}
\\
&< \delta^2-\epsilon+\frac{1}{K-1}.
\end{align*}
This contradicts the assumption that $K>3\epsilon^{-1}$, completing the proof.
\end{proof}

\section{Open questions}

To conclude, we formulate some open questions and conjectures connected to our results and the broader topic.

We begin by asking for a natural extension of \cref{thm_2}.

\begin{question}
Is there a multiplicatively invariant density on $\N$ (similar to $\AMdens(.)$) such that any set with positive density with respect to this density notion contains $\{x,x+y,xy\}$ for some $x,y\in\N$. 
\end{question}

The next question seeks a quantitative version of (a special case of) Moreira's theorem \cite{Moreira17}.
Given $r\in\N$, let $M(r)$ denote the smallest positive integer such that for all $N\geq M(r)$ and all $r$-colorings of $[N]$ one can find $x,y\in[N]$ with $x>y>2$ such that $\{x+y,xy\}\subset[N]$ is monochromatic. 
It follows from the compactness principle (see~\cite[Section~1.5]{GRS90}) applied to Moreira's theorem that $M(r)$ is well defined for every $r\in\N$.

\begin{question}
What lower and upper bounds on $M(r)$ can be provided?
\end{question}

\begin{remark}
After a preprint of this paper appeared on arXiv, it was shown by Green and Sawhney \cite{GreenSawhney25arXiv} that
$r\ll \log(M(r)) \ll e^{r^{50}}$.
\end{remark}

Finally, it is natural to inquire about sums and products in the set $\P-1$.

\begin{conjecture}
For every $a\in\N$ there exist $x,y\in\N$ with $x>y>a$ such that 
$\{x+y,xy\}\subset \P-1$.
\end{conjecture}

\bibliographystyle{aomalphanomr}
\bibliography{mynewlibrary.bib,ProjectSpecificReferences.bib}


\bigskip
\footnotesize
\noindent
Florian K.\ Richter\\
\textsc{{\'E}cole Polytechnique F{\'e}d{\'e}rale de Lausanne (EPFL)}\\
\href{mailto:f.richter@epfl.ch}
{\texttt{f.richter@epfl.ch}}

\end{document}